\documentclass[a4paper,11pt,reqno]{amsart}

\usepackage{amssymb}

\usepackage[margin=1.15in,top=1.5in,bottom=1.5in]{geometry}

\linespread{1.05}

\newtheorem{thm}{Theorem}[section]
\newtheorem{lem}[thm]{Lemma}
\newtheorem{cor}[thm]{Corollary}
\newtheorem{prop}[thm]{Proposition}
\theoremstyle{definition}
\newtheorem{defn}[thm]{Definition}

\newtheorem{rem}[thm]{Remark}

\newcommand{\eps}{\epsilon}
\newcommand{\C}{{\mathcal C}}

\begin{document}

\title{Hypergraph containers}

\subjclass[2000]{05C65}

\author{David Saxton}
\author{Andrew Thomason}

\thanks{The first author was supported by a grant from the EPSRC}

\address{Department of Pure Mathematics and Mathematical Statistics\\
Centre for Mathematical Sciences, Wilberforce Road, Cambridge CB3 0WB, UK}

\email{d.saxton@dpmms.cam.ac.uk}
\email{a.g.thomason@dpmms.cam.ac.uk}

\begin{abstract}
  We develop a notion of containment for independent sets in hypergraphs.
  For every $r$-uniform hypergraph $G$, we find a relatively small
  collection $\C$ of vertex subsets, such that every independent set of $G$
  is contained within a member of $\C$, and no member of $\C$ is large; the
  collection, which is in various respects optimal, reveals an underlying
  structure to the independent sets. The containers offer a straightforward
  and unified approach to many combinatorial questions concerned (usually
  implicitly) with independence.

  With regard to colouring, it follows that simple $r$-uniform hypergraphs
  of average degree~$d$ have list chromatic number at least
  $(1/(r-1)^2+o(1))\log_r d$. For $r=2$ this improves a bound due to Alon
  and is tight. For $r\ge3$, previous bounds were weak but the
  present inequality is close to optimal.

  In the context of extremal graph theory, it follows that, for each
  $\ell$-uniform hypergraph~$H$ of order~$k$, there is a collection $\C$ of
  $\ell$-uniform hypergraphs of order~$n$ each with $o(n^k)$ copies of~$H$,
  such that every $H$-free $\ell$-uniform hypergraph of order~$n$ is a
  subgraph of a hypergraph in~$\C$, and $\log|\C|\le c n^{\ell-1/m(H)}\log
  n$ where $m(H)$ is a standard parameter (there is a similar statement for
  induced subgraphs).  This yields simple proofs, for example, for the
  number of $H$-free hypergraphs, and for the sparsity theorems of
  Conlon-Gowers and Schacht. A slight variant yields a
  counting version of the K\L{R} conjecture.

  Likewise, for systems of linear equations the containers supply, for
  example, bounds on the number of solution-free sets, and the existence of
  solutions in sparse random subsets.

  Balogh, Morris and Samotij have independently obtained related results.
\end{abstract}

\maketitle

\section{Introduction}
A substantial number of theorems in the literature can be phrased naturally
in terms of independent sets in uniform hypergraphs, though this
phraseology is not often used explicitly. An $r$-uniform hypergraph, or
$r$-{\em graph}, $G$ is a pair $(V(G), E(G))$ comprising two sets, the
vertices $V(G)$ and edges $E(G)$ of~$G$, where each edge $e\in E(G)$ is a
set of $r$~elements of~$V(G)$. Hence a 2-graph is an ordinary graph. A set
$I\subset V(G)$ is {\em independent} if there is no edge $e\in E(G)$ with
$e\subset I$.

There are many questions that, on the face of it, have little to do with
hypergraphs, but which can be formulated naturally in terms of the number
of independent sets in some hypergraph or class of hypergraphs (examples
will be given later). Nevertheless, the
question {\it per se} of how many independent sets there can be in a graph
has attracted attention only relatively recently. The maximum number of
independent sets in a graph of given average degree can be determined
easily via the Kruskal-Katona theorem~\cite{KK1,KK2}, but for regular
graphs the maximum is harder to find: following a good estimate by
Alon~\cite{A3}, the exact value for bipartite graphs was determined by
Kahn~\cite{K} via an elegant entropy argument, and his result was extended
to all graphs by Zhao~\cite{Z}. There are at most
$(2^{d+1}-1)^{n/2d}=2^{n/2+O(n/d)}$ independent sets in a $d$-regular graph
of order$~n$ (that is, having $n$ vertices), and this number is attained by
$n/2d$ disjoint copies of $K_{d,d}$.

It would be convenient for many purposes if there were at most $2^{o(n)}$
independent sets in an $r$-graph $G$ of order~$n$ and average degree~$d$,
but examples like that just cited show this hope to be a forlorn one.
Nevertheless, for the applications we have in mind, it is enough to find a
good collection $\C$ of {\em containers} for independent sets: this is a
family of subsets of $V(G)$ such that, for each independent set $I$, there
is a set $C\in\C$ with $I\subset C$, and $|\C|=2^{o(n)}$. Of course, we
could just take $\C=\{V(G)\}$, but this collection would not be helpful:
for $\C$ to be of use, a further condition is needed that each container
$C\in\C$ is not large, in a sense made precise later
(see~\S\ref{subsec:mu}).

Another immediate candidate for $\C$ is the collection of {\em maximal}
independent sets, but this too can be large; for example, if $d$ is even,
adding a $1$-factor into the vertex classes of each $K_{d,d}$ of the graph
$(n/2d)K_{d,d}$ produces a $(d+1)$-regular graph with at least $2^{n/4}$
maximal independent sets. (The maximum number of maximal independent sets
in any graph of order~$n$ was determined by Moon and Moser~\cite{MM}.)

The main purpose of this paper is to show that every $r$-graph~$G$ of
average degree~$d$ and order~$n$ does have a small collection $\C$ of
containers.  Typically, but not always, $|\C|\le2^{c_dn}$ where $c_d$ is
approximately $d^{-1/(r-1)}$. Results of this kind were known previously in
special cases.  Sapozhenko~\cite{Sap1,Sap2,Sap5,Sap4,Sap3} treated regular
2-graphs. Containers for $r$-graphs were introduced and used in~\cite{ST}
for the restricted instance of simple regular $r$-graphs (a hypergraph is
{\em simple} or {\em linear} if every pair of vertices lies in at most one
edge). However, the most interesting applications require containers for
non-regular $r$-graphs.  Finding such containers presents significant
difficulties and the method here is unrelated to that of~\cite{ST}.
(Nevertheless, the method of~\cite{ST} is good enough to give easy proofs
of some of the results here --- see~\cite{STe}.)

We describe our main results about containers in~\S\ref{sec:containers}.
The fundamental result is Theorem~\ref{thm:cover} stated
in~\S\ref{subsec:mainthm}.  It is worth mentioning that the statement
applies to all $r$-graphs~$G$ but it gives useful information only if $d$
is large (though independently of~$n$). In order to state the main theorem
we need to introduce and motivate a couple of concepts (degree measure and
the co-degree function), but their definitions are quite straightforward.
This discussion all takes place in~\S\ref{sec:containers}. The main result
is, in some senses, optimal, as we shall explain.

As well as the main theorem, \S\ref{sec:containers} includes two
consequences of it, packaged for ready use in two different kinds of
applications.  These two varieties are worth emphasising, because they
highlight two ways in which we might require a container $C$ to be ``not
large'': in one version $e(G[C])$ is small, which is to say that
the container has only a few edges inside it, and in the other version
$|C|$ is small, meaning that the container does not have many
vertices. These two situations are quite different in the way they are
handled, though both are derived from the same main theorem.

The actual construction of the  containers is given
in~\S\ref{sec:online}. The construction is via an algorithm, just a few
lines long. This algorithm is needed only for the proof
of Theorem~\ref{thm:cover} and no understanding of it is required in order
to apply the theorem; nevertheless the algorithm clearly lies at the heart
of the whole process, and so~\S\ref{sec:online} includes some discussion with
the aim of illuminating what is going on.

In~\S\ref{sec:calc} we prove Theorem~\ref{thm:cover}; this comes down to
making some calculations that verify the performance of the algorithm. The
calculations are mostly straightforward, though at one point we used a
slightly more detailed argument than is necessary, in order to achieve
better constants.

Having proved the main result, we proceed in~\S\ref{sec:iteration}
and~\S\ref{sec:uniform} to derive the two consequences mentioned previously
(we include as well a more technical version of one of them, useful in more
sensitive applications).
The optimality of the main theorem, or at least one aspect of
it, is proved in~\S\ref{sec:optimality}, but a potentially better approach
to the algorithm is mentioned in~\S\ref{sec:postscript}.

Before getting down to the details of the container theorem,
in~\S\ref{sec:applics} we offer some motivation by outlining a few
applications. The details of these are given
in~\S\ref{sec:list}--\S\ref{sec:sparse}.

\subsection{A little notation}

We use standard notation. In particular, for $m,n\in{\mathbb N}$ we let
$[n]=\{1,\ldots,n\}$ and $[m,n]=\{m,\ldots,n\}$. For collections of subsets
we write, for example, $[m,n]^{(s)}=\{\sigma\subset[m,n]:|\sigma|=s\}$,
$[m,n]^{(>s)}=\{\sigma\subset[m,n]:|\sigma|>s\}$, and so on. As usual,
$\mathcal{P}(S)$ denotes the collection of all subsets of~S; we omit
parentheses where no confusion can arise, for instance writing
$\mathcal{P}[n]$ instead of $\mathcal{P}([n])$. If $G$ is a hypergraph we
write $e(G)=|E(G)|$ for the number of edges of~$G$ and $v(G)=|V(G)|$ for
the number of vertices of $G$. If $S\subset V(G)$ then $G[S]$ denotes the
subhypergraph of $G$ induced by~$S$, that is, $G[S]=(S,E(G)\cap\mathcal{P}
(S))$.

\section{Some applications of containers}\label{sec:applics}

The purpose of this section is to highlight some results that follow from
the existence of containers, in the hope of motivating the main result
itself, Theorem~\ref{thm:cover}. The applications involve list colouring,
extremal graph theory, and solutions of linear equations. 

The applications are of two essentially different kinds, namely those in
which we require $|C|$ to be bounded for each $C\in\C$, and those where we
require $e(C)$ to be bounded. In fact we give only one application where
$|C|$ is bounded, namely the one about list colouring: hence
Theorem~\ref{thm:uniform} (the version of the container theorem packaged
for bounds on $|C|$) is used only in this application.

The remaining applications require $e(C)$ to be bounded. On the face of it
they appear more
numerous but this appearance is deceptive: for example, all the results
concerning $H$-free graphs, including those involving sparse random graphs,
are actually direct corollaries of a single theorem about the class of
$H$-free graphs, namely Theorem~\ref{thm:ffree_cover}, and this theorem is
the only place in the argument where the container theorem is
invoked. Moreover, it is applied to just one hypergraph (more exactly, to
one hypergraph $G(N,H)$ for each $H$ and~$N$).

Likewise, the applications to solutions of linear equations involve
translating some given problem into a question about the independent sets
in a specific hypergraph~$G$, then finding containers for this~$G$, and then
interpreting these containers back in the original context.

The list colouring application is thus rather different to the others but
it is the one which originally motivated us (our early thoughts appeared
in~\cite{ST}), and it is the application which has shaped the algorithm
that we use to construct containers.

The technical details of the list colouring and extremal graph theory
applications are supplied later in~\S\ref{sec:list}--\ref{sec:sparse}. As
for the arithmetical applications, we state them here in order to
illustrate the use of the container theorem, but we give the details
elsewhere~\cite{STa}, so as to maintain the focus here on the container
theorem itself.

\subsection{List colourings}\label{subsec:list}

A $2$-graph $G$ is said to be $k$-{\em choosable} if, whenever for each
vertex $v\in V(G)$ we assign a list $L_v$ of $k$ colours to~$v$, then it is
possible to choose a colour for $v$ from the list~$L_v$, so that no two
adjacent vertices receive the same colour. The {\em list chromatic number}
$\chi_l(G)$ (also called the {\em choice number}) is the smallest $k$
such that $G$ is $k$-choosable. If all the lists are the same then a list
colouring is just an ordinary $k$-colouring and so $\chi_l(G)$ is at
least $\chi(G)$, the ordinary chromatic number of~$G$. This natural
definition was first studied by Vizing~\cite{V} and by Erd\H{o}s, Rubin and
Taylor~\cite{ERT}. One of the main discoveries of~\cite{ERT} is that
$\chi_l(G)$ can be much larger than $\chi(G)$, because
$\chi_l(K_{d,d}) = (1+o(1))\log_2 d$, whereas $\chi(K_{d,d}) = 2$.

In fact, unlike $\chi(G)$, $\chi_l(G)$ must grow with the minimum degree
of the graph~$G$. Alon~\cite{A1,A2} showed that $\chi_l(G)\ge
(1/2+o(1))\log_2 d$ holds for any graph $G$ of minimum degree~$d$.

The notion of $k$-choosability carries over directly to $r$-graphs, when it
is understood that the vertex colours are chosen so that no edge has all
its vertices the same colour. There is a straightforward reason, as pointed
out by Alon and Kostochka~\cite{AK1} (see too Haxell and Pei~\cite{HP}),
why for $r\ge3$ it is not true for
$r$-graphs $G$ that $\chi_\ell(G)$ grows with the average degree. Let $F$
be some graph on $n$ vertices, say $F=(n/2)K_2$, and let $G$ be some
$r$-graph each of whose edges contains an edge of~$F$. Then
$\chi_l(G)\le\chi_l(F)$, so in this example $\chi_l(G)=2$, whereas the
average degree of $G$ can be large. However, if we restrict to simple
$r$-graphs the situation is different. Haxell and Pei~\cite{HP} showed
$\chi_\ell(G)=\Omega(\log d/\log \log d)$ if $G$ is a Steiner triple
system, and Haxell and Verstra\"ete~\cite{HV} proved that
$\chi_l(G)\ge(1+o(1))\left(\log d/5\log\log d\right)^{1/2}$ for all simple
$d$-regular $3$-graphs~$G$. Alon and Kostochka~\cite{AK1} showed
$\chi_l(G)\ge(\log d)^{1/(r-1)}$ for simple $r$-graphs~$G$ of average
degree~$d$, and in~\cite{ST} it was shown that $\chi_l(G)=\Omega(\log d)$
for simple $d$-regular $r$-graphs. We extend this to all simple $r$-graphs,
at the same time giving a better constant.

\begin{thm}\label{thm:chil}
  Let $r\in\mathbb N$ be fixed. Let $G$ be a simple $r$-graph with
  average degree~$d$. Then, as $d\to\infty$,
$$
\chi_l(G)\,\ge\,\,(1+o(1))\,\frac{1}{(r-1)^2}
\log_rd
$$
holds. Moreover, if $G$ is regular then
$$
\chi_l(G)\,\ge\,\,(1+o(1))\,\frac{1}{r-1}\log_rd\,.
$$
\end{thm}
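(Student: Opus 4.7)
The plan is the random-list template of Alon~\cite{A1,A2}, but with Theorem~\ref{thm:cover} taking the place of any graph-specific counting. Suppose for contradiction that $G$ is $k$-choosable with $k$ just below the stated bound. Fix a pool $[N]$ of colours ($N$ to be chosen) and independently assign to each vertex $v$ a uniformly random set $L_v\in[N]^{(k)}$. By choosability there exists a proper $L$-colouring $\phi:V(G)\to[N]$, and each colour class $V_c=\phi^{-1}(c)$ is an independent set of $G$. Invoke Theorem~\ref{thm:cover} to obtain a family $\mathcal C$ with $\log|\mathcal C|\le T$ whose members all satisfy $|C|\le(1-\varepsilon)n$ and such that every independent set lies in some $C\in\mathcal C$; pick for each $c$ a container $C_c\supset V_c$.

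Now fix a tuple $(C_1,\dots,C_N)\in\mathcal C^N$ and estimate the probability that the random lists can produce a proper colouring compatible with this tuple. A necessary condition is that every vertex $v$ has at least one colour $c\in L_v$ with $v\in C_c$. Writing $s_v=|\{c:v\in C_c\}|$, we have $\sum_vs_v=\sum_c|C_c|\le(1-\varepsilon)nN$, and the probability that $v$ has no usable colour is $\binom{N-s_v}{k}/\binom{N}{k}$. Since the lists are independent across $v$ and $(1-x/N)^k$ is convex in $x$, Jensen's inequality bounds the success probability for this tuple by $\exp(-n\varepsilon^k)$. A union bound over the $|\mathcal C|^N$ tuples yields
\[
\Pr[L\text{ admits a proper colouring}]\;\le\;\exp\bigl(TN-n\varepsilon^k\bigr),
\]
so a contradiction follows as soon as $\varepsilon^k$ comfortably exceeds $TN/n$.

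For the regular half of the theorem the calculation closes cleanly: Theorem~\ref{thm:cover} applied to a simple $d$-regular $r$-graph delivers $T/n\le d^{-1/(r-1)+o(1)}$ with $\varepsilon$ any fixed small constant, so taking $k=(1+o(1))\tfrac{1}{r-1}\log_rd$ and $N$ just above $n\varepsilon^k/T$ suffices. The hard part is the first half: the container theorem is sensitive to vertex degrees rather than to average degree alone, and for non-regular simple $r$-graphs the passage from average degree $d$ to usable local structure costs an additional factor of $r-1$ in the exponent of $d$, whether carried out by extracting a regularised sub-$r$-graph beforehand or by iterating the container construction on the shadow of a high-degree part. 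That extra factor is exactly what turns $\tfrac{1}{r-1}\log_rd$ into $\tfrac{1}{(r-1)^2}\log_rd$; the probabilistic argument above is otherwise unchanged.
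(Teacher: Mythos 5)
Your probabilistic skeleton (random lists, containers for colour classes, Jensen to lower-bound the rejection probability, union bound over tuples) is in the spirit of what the paper does in Theorem~\ref{thm:uncompatible_lists}, but the core difficulty is precisely the step you wave at: Theorem~\ref{thm:cover} does \emph{not} give you $|C|\le(1-\varepsilon)n$. It controls the \emph{degree measure} $\mu(C)$, and for a non-regular $r$-graph a small $\mu(C)$ puts no useful bound on $|C|$ at all --- the paper points at $K_{d,n-d}$ as the canonical obstruction, where some container must have $n-d$ vertices. So the family $\mathcal C$ you feed into the union bound does not have the size property your computation needs, and the argument as written does not go through for general simple $r$-graphs.

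The paper's fix is not the regularisation or shadow-iteration you gesture at. It is the \emph{online property} of the container construction: $C(T)\cap[v]$ is determined by $T\cap[v]$. This is what allows Lemma~\ref{lem:lemmoid} and Theorem~\ref{thm:uniform} to convert a bound on $\mu(C)$ into a bound on $|C\cap[v]|$ for a well-chosen initial segment $[v]$ (the vertices being ordered by degree), together with a count of containers \emph{restricted to} $[v]$. Theorem~\ref{thm:uncompatible_lists} is then run on those initial segments, with each tuple of containers nominating its own cut-off $v=g(C_1,\ldots,C_t)$. Without this, there is simply no way to pass from a degree-measure bound to a size bound, and your proposal has no substitute for it.

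There is also a quantitative confusion in your explanation of where $(r-1)^2$ versus $(r-1)$ comes from. It is not a loss of an $(r-1)$ factor ``in the exponent of $d$'': the number of containers (hence $T/n$) has exponent $d^{-1/(r-1)+o(1)}$ in both cases. The difference is the constant $c$ in the container-size bound. In general one only has $\mu(C)\le 1-1/r!+o(1)$, so $c\approx 1/r!$ and $\log(1/c)\approx\log r!\approx(r-1)\log r$, which costs a factor $r-1$ against the regular case, where iterating (Corollary~\ref{cor:sparse_container}) yields $e(G[C])=o(e(G))$ and hence $|C|\le(1-1/r+o(1))n$, i.e.\ $c\approx 1/r$ and $\log(1/c)\approx\log r$. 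In particular, even your regular-case paragraph is incomplete as stated: applying Theorem~\ref{thm:cover} directly to a regular $r$-graph gives only $\varepsilon\approx 1/r!$, not ``any fixed small constant'', and would prove the weaker $\frac{1}{(r-1)^2}\log_r d$ bound for regular graphs too. To reach $\frac{1}{r-1}\log_r d$ you need the iterated containers of Corollary~\ref{cor:sparse_container}, which your proposal never invokes.
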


Note that, for $r=2$, this improves Alon's bound~\cite{A2} by a factor of~2
and is best possible. We think that the bound given for regular
$r$-graphs might hold for general $r$-graphs and moreover that it too might
be best possible (see~\S\ref{sec:list}).
For colourings of non-simple $r$-graphs, see~\S\ref{sec:postscript}.

\subsection{$H$-free graphs}\label{subsec:hfree}

An $\ell$-graph on vertex set $[N]$ is said to be $H$-{\em free} if it
contains no subgraph isomorphic to the $\ell$-graph~$H$.

As far as $H$-free graphs are concerned, our main result is this: for any
given $\ell$-graph $H$, though there are many $H$-free $\ell$-graphs, each
of these is contained in one of a very small collection of $\ell$-graphs
that are almost $H$-free.  More exactly, there is a small collection $\C$
of $\ell$-graphs, each $H$-free $\ell$-graph being a subgraph of an
$\ell$-graph in $\C$, and no $\ell$-graph in $\C$ having more than
$o(N^{v(H)})$ copies of~$H$. The main content of the theorem is that the
size of $\C$ is very small.  For graphs at least, Szemer\'edi's regularity
lemma gives a collection with $\log |\mathcal{C}| = o(N^2)$, but the size
of $\C$ in our theorem is much smaller. It is expressed in terms of a
parameter $m(H)$ that appears often in the literature.

\begin{defn}\label{def:mH}
 For an $\ell$-graph $H$ with $e(H)\ge2$, let
\[
 m(H) = \max_{H' \subset H,\,e(H') > 1} \frac{e(H')-1}{v(H')-\ell}.
\]
\end{defn}

Sometimes, $H$ is called (strictly) balanced if the maximum is attained
(uniquely) when $H^\prime=H$. However, this restriction is not needed in any
of our arguments and it is ignored.

We shall indicate shortly why the parameter $m(H)$ might be expected to
make an appearance here, but first we state our main theorem for $H$-free
$\ell$-graphs. As usual, let ${\rm ex}(N,H)$ be the maximum number of edges
in an $H$-free graph of order $N$ and let $\pi(H)=\lim_{N\to\infty}{\rm
  ex}(N,H)\binom{N}{\ell}^{-1}$. The symbol $\subset$ in the theorem
means ``is a subgraph of''.

\begin{thm}\label{thm:ffree_cover}
Let $H$ be an $\ell$-graph with $e(H)\ge2$ and let $\epsilon>0$.
For some $c>0$ and for every $N \ge c$, there exists a collection
$\C$ of $\ell$-graphs on vertex set~$[N]$ such that
\begin{itemize}
 \item[(a)]
   for every $H$-free $\ell$-graph $I$ on vertex set $[N]$, there exists
   $C\in\C$ with $I\subset C$,
 \item[(b)]
   for every $\ell$-graph $C \in \mathcal{C}$,
   the number of copies of $H$ in $C$ is at most $\epsilon N^{ v(H)}$,
   and $e(C) \le (\pi(H)+\epsilon) {N \choose \ell}$,
 \item[(c)]
   $\log |\mathcal{C}| \leq c N^{\ell-1/m(H)} \log N$,
 \item[(d)]
   moreover, for every $I$ in (a), there exists
   $T=(T_1,\ldots,T_s)$ where $T_i \subset I$, $s\le c$ and
   $\sum_ie(T_i) \le c N^{\ell-1/m(H)}$, such that $C=C(T)$.
\end{itemize}
\end{thm}

The meanings of~(a), (b) and (c) should be clear enough. Condition~(a) is the
basic property of the collection~$\C$, namely that all $H$-free graphs are
subgraphs of members of~$\C$. Condition~(b) is what is meant by the
containers themselves being small, which is that each $C$ contains few
copies of $H$. This immediately implies the bound on $e(C)$, via the
supersaturation theorem of Erd\H{o}s and
Simonovits~\cite{ST}. Condition~(c) says that the collection $\C$ is small.

Condition~(d) should be understood in the following way. The notation
$C=C(T)$ is used to mean that $C$ is a function of, or is determined
by,~$T$. Now $T$ is a collection of subgraphs of $I$ that are small, that
is, have few edges. The point of condition~(d) is that $\C$ is therefore
small, since $|\C|$ is at most the number of possible objects~$T$, which is
a small number because the subgraphs comprising $T$ are small. The bound on
$|\C|$ that~(d) directly implies is the one given in condition~(c), and for
most purposes we could dispense with~(d) because the bound in~(c) is good
enough. However, condition~(d) gives slightly more information, namely that
the graphs $T_i$ comprising $T$ are actually subgraphs of $I$ and not just
arbitrary graphs. This extra information can be just enough, in tight
corners (specifically, in Lemma~\ref{lem:sparse}), to give a better result
than what can be obtained by a direct use of~(c) (effectively it removes
the $\log N$), and we retain~(d) for this reason.

The existence of the collection~$\C$ follows straightforwardly from the
results in~\S\ref{sec:containers}, as shown in~\S\ref{sec:hfree}, by
applying them to the $e(H)$-graph $G=G(N,H)$, whose $n=\binom{N}{\ell}$
vertices are the $\ell$-sets in~$[N]$, and whose edges are subsets of
$V(G)$ spanning a copy of~$H$ in~$[N]$. The subsets of $V(G)$ are then
$\ell$-graphs with vertex set~$[N]$, and independent sets in $G$ correspond
to $H$-free $\ell$-graphs. The $\ell$-graphs in $\C$ are simply the
containers for the independent sets of $G$ supplied by our main container
theorem (more precisely, Corollary~\ref{cor:sparse_container}). In order to
apply the container theorem and so obtain Theorem~\ref{thm:ffree_cover},
all that is required is to calculate a simple parameter
of~$G(N,H)$. Details are in~\S\ref{sec:hfree}.

We can now indicate why the parameter $m(H)$ shows up in
Theorem~\ref{thm:ffree_cover}. It is well known, and not hard to check,
that if $t=o(N^{\ell-1/m(H)})$ then for some $H' \subset H$ almost all
$\ell$-graphs on $N$ 
vertices with $t$ edges contain many fewer copies of $H'$ than they do
edges. Thus most subsets of $V(G(N,H))$ of size $t$ are independent, or
close to it (that is, contain many fewer edges than vertices). For reasons
discussed in~\S\ref{subsec:optimality}, this means that $e^{\Omega(t)}$
containers are needed, and from this standpoint,
Theorem~\ref{thm:ffree_cover} is more or less best possible. Perhaps a more
convincing demonstration of optimality is that an improvement in the bound
on $|\C|$ in Theorem~\ref{thm:ffree_cover} would directly improve, say, the
bound on $p$ in Theorem~\ref{thm:ffree_sparse}, but the bound there is
well known (and readily checked) to be optimal.

We remark that Theorem~\ref{thm:ffree_cover} can be extended so that the
$\ell$-graphs $I$ need not be independent: they need only have few copies
of~$H$. The extension is given in Theorem~\ref{thm:ffree_cover_heavy}
but, again, we defer the technicalities to~\S\ref{sec:hfree}.

The remainder of this section includes further results on $H$-free
$\ell$-graphs; as pointed out earlier, they are all (except
Corollary~\ref{cor:C4}) just consequences of
Theorem~\ref{thm:ffree_cover}.

\subsection{The number of $H$-free graphs}

How many $H$-free $\ell$-graphs are there altogether on vertex set~$[N]$?
Choosing any maximum $H$-free graph and taking all its subgraphs supplies
at least $2^{(\pi(H)+o(1)) {N \choose \ell}}$ $H$-free graphs. But each
$H$-free graph is a subgraph of a member of the collection~$\C$ given by
Theorem~\ref{thm:ffree_cover}, so the total number of $H$-free graphs is at
most $|\C|2^{\,\max_{C\in\C} e(C)}$. Now, $\max_{C\in\C}e(C) \le
(\pi(H)+o(1)) {N \choose \ell}$, and Theorem~\ref{thm:ffree_cover} shows
that $|\C|=2^{o(N^\ell)}$, giving the following immediate consequence.

\begin{cor}\label{cor:ffree_count}
  Let $H$ be an $\ell$-graph. The number of $H$-free
  $\ell$-graphs on vertex set $[N]$ is $2^{(\pi(H)+o(1)) {N \choose \ell}}$.
\end{cor}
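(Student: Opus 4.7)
The plan is to derive both bounds on the number of $H$-free $\ell$-graphs directly from the container theorem; no hypergraph regularity is required. For the lower bound, I would note that by the definition of $\pi(H)$ we have $\mathrm{ex}(N,H) = (\pi(H) + o(1))\binom{N}{\ell}$, so by fixing an extremal $H$-free $\ell$-graph $F$ on $[N]$ and observing that every subgraph of $F$ is $H$-free, the number of $H$-free $\ell$-graphs on $[N]$ is at least $2^{e(F)} = 2^{(\pi(H) - o(1))\binom{N}{\ell}}$.

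For the matching upper bound, I would fix $\epsilon > 0$ and apply Theorem~\ref{thm:ffree_cover} to obtain a container family~$\C$. By part~(a), each $H$-free $\ell$-graph $I$ on $[N]$ is a subgraph of some $C\in\C$, so the number of $H$-free $\ell$-graphs is bounded by
\[
\sum_{C\in\C} 2^{e(C)} \;\le\; |\C| \cdot 2^{(\pi(H)+\epsilon)\binom{N}{\ell}},
\]
where the edge bound on each container comes from part~(c); one can view this edge bound as the supersaturation theorem of Erd\H{o}s and Simonovits applied to the bound on the number of copies of $H$ in $C$.

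All that remains is to check that $\log|\C|$ is negligible in the exponent. Since $e(H)\ge 2$, the parameter $m(H)$ of Definition~\ref{def:mH} is a positive constant depending only on $H$, so $\ell - 1/m(H) < \ell$, and part~(d) gives $\log|\C| \le cN^{\ell - 1/m(H)}\log N = o(N^\ell) = o(\binom{N}{\ell})$. Hence the total count is at most $2^{(\pi(H)+2\epsilon)\binom{N}{\ell}}$ for all sufficiently large~$N$, and letting $\epsilon \to 0$ closes the gap with the lower bound. There is no real obstacle here: all the difficulty is absorbed into Theorem~\ref{thm:ffree_cover}, and the corollary is essentially a one-line consequence of the fact that $|\C|$ is exponentially smaller than the number of subgraphs of a single container.
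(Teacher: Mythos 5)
Your proof is correct and takes essentially the same approach as the paper: apply Theorem~\ref{thm:ffree_cover}, bound the count by $\sum_{C\in\C}2^{e(C)}\le|\C|\cdot2^{(\pi(H)+\epsilon)\binom{N}{\ell}}$, and note $\log|\C|=o(\binom{N}{\ell})$; you have additionally spelled out the (standard, implicit) lower bound via subgraphs of an extremal graph. The only small point you omit is that Theorem~\ref{thm:ffree_cover} requires $e(H)\ge2$, so the case $e(H)\le1$ must be handled separately, as the paper notes is trivial.
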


In the case $\ell=2$, this was proved for complete~$H$ by Erd\H{o}s,
Kleitman and Rothschild~\cite{EKR} and for general~$H$ by Erd\H{o}s, Frankl
and R\"odl~\cite{EFR}. Nagle, R\"odl and Schacht~\cite{NRS} proved it for
general~$\ell$ using hypergraph regularity methods.

For $\ell$-graphs $H$ which satisfy ${\rm ex}(N,H)=o(N^\ell)$ (when
$\ell=2$ this means $H$ is bipartite), we have $\pi(H)=0$, and
Corollary~\ref{cor:ffree_count} is unhelpful. Nevertheless our results can
still be useful, provided appropriate information about $G(N,H)$ is
available. The simplest case is $\ell=2$ and $H=K_{2,2}=C_4$, where it is
well known that ${\rm ex}(N,C_4)= (1/2+o(1))N^{3/2}$ (Erd\H{o}s, R\'enyi
and S\'os~\cite{ERS}), implying the trivial upper bound $2^{O(N^{3/2}\log
  N)}$ for the number of $C_4$-free graphs. 
Theorem~\ref{thm:iteration} describes what happens if we apply the main
container theorem repeatedly to a hypergraph: by applying this theorem to
$G(N,C_4)$, the following can be obtained.

\begin{cor}\label{cor:C4}
The number of $C_4$-free graphs on vertex set $[N]$ is at most
$2^{(300+o(1))N^{3/2}}$.
\end{cor}

We shall not prove Corollary~\ref{cor:C4}; we state it just as an
illustration of what can be derived by plugging numbers into a generic
container theorem. The argument is very similar to that for the upper bound
in Theorem~\ref{thm:sidon} on the number of Sidon sets, which also relies on
Theorem~\ref{thm:iteration}, and whose details can be found in~\cite{STa}.
The reason we do not give details for Corollary~\ref{cor:C4} is that
Kleitman and Winston~\cite{KW} obtained a finer bound, namely
$2^{(1.082+o(1))N^{3/2}}$. The number of $K_{s,t}$-free graphs
has been well estimated by Balogh and Samotij~\cite{BS}.
Recently, Morris and the first author~\cite{MS}, using container methods
and other techniques, have shown that the number of $C_{2k}$-free graphs
on vertex set $[N]$ is at most $2^{O(N^{1+1/k})}$, where $C_{2k}$ is
the cycle of length~$2k$. The order of the extremal
function ${\rm ex}(N,C_{2k})$ is unknown in general, though Bondy and
Simonovits~\cite{BoSi} proved ${\rm ex}(N,C_{2k})=
O(N^{1+1/k})$. Nevertheless, it is
further shown in~\cite{MS} that, for some $c>0$, there are more than
$2^{(1+c){\rm ex}(N,C_6)}$ $C_6$-free graphs of order~$N$ for
infinitely many~$N$.

\subsection{Induced-$H$-free graphs}\label{subsec:ihfree}

Alongside the many results about $H$-free graphs, there is a corresponding
corpus about {\em induced} $H$-free graphs, that is, graphs with no induced
subgraph isomorphic to~$H$. The number of induced $H$-free graphs was closely
estimated by Pr\"omel and Steger~\cite{PS}, and there have been many
subsequent refinements.

If $I$ is an induced $H$-free $\ell$-graph, we need to ask what kind of
object $C$ must be in order that the inclusion $I\subset C$ is helpful; if,
as in Theorem~\ref{thm:ffree_cover}, $C$ itself is just an $\ell$-graph and
$I\subset C$ means $I$ is a subgraph of~$C$, then the induced subgraphs of
$I$ differ from those of $C$, which is no use. We borrow the notion of {\em
  2-coloured multigraph} from~\cite{MT,T}. A 2-coloured
$\ell$-multigraph~$C$ on vertex set~$[N]$ is a pair of edge sets $C_R, C_B
\subset [N]^{(\ell)}$, which we call the red and the blue edge sets. Let
$I$ be an $\ell$-graph on $[N]$.  Then we write $I\subset C$ if
$E(I)\subset C_R$ and $[N]^{(\ell)}\setminus E(I)\subset C_B$. Thus edges
in $C_R\cap C_B$ always help towards the inclusion $I\subset C$. If we
construct (see~\S\ref{sec:hfree}) a hypergraph akin to $G(N,H)$ but which
encodes both red and blue edges, and apply the container theorem to it, we
obtain the following analogue of Theorem~\ref{thm:ffree_cover}.

\begin{thm}\label{thm:ind_ffree_cover}
Let $H$ be an $\ell$-graph and let $\epsilon>0$. For some $c>0$ and for $N$
sufficiently large, there exists a collection $\C$ of 2-coloured
$\ell$-multigraphs on vertex set~$[N]$ such that
\begin{itemize}
 \item[(a)]
   for every $\ell$-graph $I$ on vertex set $[N]$ with no induced copy of $H$
   there exists $C\in\C$ with $I\subset C$,
 \item[(b)]
   for every $C \in \mathcal{C}$, the number of copies of $H$ in $C$ is
   at most $\epsilon N^{ v(H)}$, 
 \item[(c)]
   $\log |\mathcal{C}| \leq c N^{\ell-(v(H)-\ell)/\left({v(H)\choose
         \ell}-1\right)} \log N$. 
\end{itemize}
\end{thm}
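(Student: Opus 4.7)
The plan is to parallel the derivation of Theorem~\ref{thm:ffree_cover}, but working with a larger auxiliary hypergraph whose independent sets encode \emph{induced} $H$-free $\ell$-graphs on~$[N]$, with the extra information ``this $\ell$-set is an edge / is not an edge'' carried by a second copy of the ground set.

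Set $r=\binom{v(H)}{\ell}$ and introduce an $r$-graph $G=G_{\mathrm{ind}}(N,H)$ on the vertex set $V(G)=[N]^{(\ell)}\times\{R,B\}$. For each injection $\phi:V(H)\to[N]$, declare
$$e_\phi=\{(\phi(\sigma),R):\sigma\in E(H)\}\cup\{(\phi(\sigma),B):\sigma\in\tbinom{V(H)}{\ell}\setminus E(H)\}$$
to be an edge; this has exactly $r$ vertices. To each $\ell$-graph $I$ on $[N]$ associate the template $S_I=\{(\sigma,R):\sigma\in E(I)\}\cup\{(\sigma,B):\sigma\notin E(I)\}$. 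Then $e_\phi\subset S_I$ iff $\phi$ is an induced embedding of $H$ in $I$, so $S_I$ is independent in $G$ precisely when $I$ has no induced copy of~$H$. Subsets $C\subset V(G)$ correspond bijectively to $2$-coloured $\ell$-multigraphs via $C_R=\{\sigma:(\sigma,R)\in C\}$ and $C_B=\{\sigma:(\sigma,B)\in C\}$, and under this correspondence $S_I\subset C$ in $G$ is exactly $I\subset C$ in the $2$-coloured sense.

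A short count gives $v(G)=2\binom{N}{\ell}=\Theta(N^\ell)$ and $e(G)=\Theta(N^{v(H)})$, so the average degree is $d=\Theta(N^{v(H)-\ell})$. I would then apply the container machinery of \S\ref{sec:containers}--\S\ref{sec:uniform} (in the same form used for Theorem~\ref{thm:ffree_cover}) to get a family $\C_0\subset\mathcal{P}V(G)$ such that every independent set of $G$ is contained in some $C\in\C_0$, every induced subhypergraph $G[C]$ contains at most $\epsilon\, e(G)$ edges of~$G$, and $\log|\C_0|$ is of order $v(G)\, d^{-1/(r-1)}\log v(G)$. Substituting the values of $v(G),d,r$ yields
$$\log|\C_0|\le c\, N^{\ell-(v(H)-\ell)/(\binom{v(H)}{\ell}-1)}\log N,$$
which is precisely~(c).

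To finish, I would let $\C$ be $\C_0$ reinterpreted as a collection of $2$-coloured $\ell$-multigraphs. Part~(a) is immediate from the correspondence $I\leftrightarrow S_I$. For~(b), every copy of $H$ in a $2$-coloured multigraph $C$ is an induced embedding putting a copy of the gadget $e_\phi$ inside $C$, so it contributes an edge of $G[C]$; the bound $e(G[C])\le\epsilon\, e(G)=O(\epsilon N^{v(H)})$ then gives the required $\epsilon' N^{v(H)}$ after absorbing constants (including $|\mathrm{Aut}(H)|$) into~$\epsilon$. The step requiring real thought, rather than bookkeeping, is ensuring that the container result is invoked in the stronger ``few $G$-edges in each container'' form rather than only as a cardinality bound on~$|C|$, since~(b) of this theorem has no cheap way out analogous to the use of Erd\H{o}s--Simonovits supersaturation in Theorem~\ref{thm:ffree_cover}(c); the rest is the encoding described above.
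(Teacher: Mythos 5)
Your proof takes essentially the same route as the paper's: the auxiliary $r$-graph on two copies of $[N]^{(\ell)}$ is identical (your $[N]^{(\ell)}\times\{R,B\}$ is the paper's $V_R\cup V_B$), the correspondence between independent sets and induced-$H$-free $\ell$-graphs is the same, and both invoke Corollary~\ref{cor:sparse_container} to get containers with few internal edges, which immediately gives~(b). The one point to tighten is that the bound $\log|\C_0|=O\bigl(v(G)\,d^{-1/(r-1)}\log v(G)\bigr)$ does not follow from the cardinalities $v(G)$ and $d$ alone but requires $\delta(G,\tau)=O(\epsilon)$ for $\tau\sim d^{-1/(r-1)}$; the paper supplies this by observing that the co-degree parameters $\delta_j$ of $G$ agree up to constant factors with those of $G(N,K_{v(H)}^{(\ell)})$, already computed in Lemma~\ref{lem:ffree_delta}, and that $m(K_{v(H)}^{(\ell)})=\bigl(\binom{v(H)}{\ell}-1\bigr)/(v(H)-\ell)$ gives exactly your exponent.
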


Note that $(v(H)-\ell)/({v(H)\choose \ell}-1) = 1/m(K)$ where $K$ is the
complete $\ell$-graph of order~$v(H)$. The form of this theorem is, to an
extent, reminiscent of Theorem~\ref{thm:ffree_cover}, and it arises from
the method of proof in which an induced copy of $H$ is modelled as a
red-blue colouring of~$K$. However the value $m(K)$ is not invariably
optimal: for
example, when $\ell=2$, $v(H)=4$ and $H$ is an induced path of length
three, then the number of induced $H$-free graphs (sometimes known as {\em
  cographs}) is only $N^{O(N)}$, so these graphs themselves comprise a
smaller collection of containers than that offered by the theorem.

Theorem~\ref{thm:ind_ffree_cover} can be used to recover basic results,
akin to Corollary~\ref{cor:ffree_count}, about the number of induced
$H$-free $\ell$-graphs. In fact we can state a probabilistic version just
as readily.  Let $G^{(\ell)}(N,p)$ be a random $\ell$-graph obtained by
choosing edges independently from the complete $\ell$-graph $K^{(\ell)}_N$
with probability~$p$.  Our result is stated in terms of a
function~$h_p(H)$, defined as follows (and discussed further, in slightly
different terminology, in~\cite{MT2}).  For a 2-coloured $\ell$-multigraph
$J$, with vertex set $[N]$ and having red and blue edge sets $J_R$ and
$J_B$, let
\[
 H_p(J) = -|J_R\setminus J_B| \log_2 p - |J_B\setminus J_R| \log_2 (1-p).
\]
The point of this definition is that, if $J_R \cup J_B = [N]^{(\ell)}$, then
the probability that $G^{(\ell)}(N,p)$ is a subgraph of $J$ is
$2^{-H_p(J)}$.  Let
\[
 \mbox{hex}_p(H,N) = \min\{\, H_p(J) : J_R \cup J_B = [N]^{(\ell)}, H
 \not\subset J\, \}\,.
\]
Then we put $h_p(H) = \lim_{N\to\infty} \mbox{hex}_p(H,N) {N \choose
  \ell}^{-1}$ (this limit exists, since an averaging argument shows
$(N-\ell)\mbox{hex}_p(H,N)\ge N\mbox{hex}_p(H,N-1)$).

\begin{thm}\label{thm:ind_ffree_measure}
Let $0<p<1$ be constant and let $H$ be an $\ell$-graph. Then
\[
\mathbb{P}(G^{(\ell)}(N,p) \mbox{ is induced-$H$-free}) =
2^{-(h_p(H)+o(1)){N\choose \ell}}.
\]
\end{thm}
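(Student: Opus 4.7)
The plan is to derive both the upper and lower bounds on $\mathbb{P}(G^{(\ell)}(N,p) \text{ is induced-}H\text{-free})$ from Theorem~\ref{thm:ind_ffree_cover}, together with a matching extremal construction; the function $h_p(H)$ will be precisely the variational quantity that makes the two ends meet.

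\textbf{Upper bound.} Write $I = G^{(\ell)}(N,p)$ and apply Theorem~\ref{thm:ind_ffree_cover}: on the event that $I$ is induced-$H$-free there exists $C = (C_R, C_B) \in \C$ with $I \subset C$. Unpacking the definition of $I \subset C$ for a 2-coloured multigraph: an $\ell$-set $e \in C_R \cap C_B$ is unconstrained, an $e \in C_R \setminus C_B$ must lie in $E(I)$, an $e \in C_B \setminus C_R$ must lie outside $E(I)$, and an $e \notin C_R \cup C_B$ is forbidden. Therefore
\[
\mathbb{P}(I \subset C) = p^{|C_R \setminus C_B|}(1-p)^{|C_B \setminus C_R|} \quad \text{if } C_R \cup C_B = [N]^{(\ell)},
\]
and $0$ otherwise. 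A union bound gives
\[
\mathbb{P}(G^{(\ell)}(N,p) \text{ induced-}H\text{-free}) \le |\C| \cdot \max_{C \in \C} p^{|C_R\setminus C_B|}(1-p)^{|C_B\setminus C_R|}.
\]
Since $\log|\C| = O\bigl(N^{\ell-(v(H)-\ell)/(\binom{v(H)}{\ell}-1)}\log N\bigr) = o\bigl(\binom{N}{\ell}\bigr)$, the container factor is negligible, and it remains to bound the exponent over $C$ with at most $\epsilon N^{v(H)}$ copies of $H$. Defining $h_p(H)$ (as will be done in \S\ref{sec:hfree}) to be the $\epsilon\to0$ limit of
\[
\inf_C \binom{N}{\ell}^{-1}\!\Bigl(|C_R\setminus C_B|\log_2(1/p) + |C_B\setminus C_R|\log_2(1/(1-p))\Bigr),
\]
where $C$ ranges over 2-coloured $\ell$-multigraphs on $[N]$ with few copies of $H$, yields the stated upper bound.

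\textbf{Lower bound.} Construct a 2-coloured multigraph $C^\ast$ that attains, up to $o(1)$, the infimum defining $h_p(H)$; such a $C^\ast$ is produced from a blow-up of a small induced-$H$-free template in the standard Pr\"omel--Steger manner, tuned to the parameter $p$ so that the product $p^{|C^\ast_R\setminus C^\ast_B|}(1-p)^{|C^\ast_B\setminus C^\ast_R|}$ is maximised subject to $C^\ast$ having $o(N^{v(H)})$ copies of~$H$. Then
\[
\mathbb{P}(G^{(\ell)}(N,p) \subset C^\ast) \ge 2^{-(h_p(H)+o(1))\binom{N}{\ell}}.
\]
With positive probability (or after a cheap deletion step, since the expected number of induced copies of $H$ in $I\subset C^\ast$ is $o(N^{v(H)})$), the resulting $I$ is induced-$H$-free, giving the matching lower bound.

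\textbf{Main obstacle.} The container theorem does the counting work, but the real content is matching the two sides. This requires a stability/supersaturation statement for 2-coloured multigraphs: any $C$ whose normalised density profile strictly beats the optimum of the variational problem defining $h_p(H)$ must contain $\Omega(N^{v(H)})$ copies of $H$. Establishing this (and verifying that the infimum is well-posed and continuous in~$p$) is the non-trivial analytic step; it is also where one pins down the precise definition of~$h_p(H)$ so that upper and lower bounds coincide. The random deletion step in the lower bound is comparatively routine.
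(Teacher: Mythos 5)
Your upper bound is exactly the paper's argument: the container collection from Theorem~\ref{thm:ind_ffree_cover} has $\log|\C|=o\bigl(\binom{N}{\ell}\bigr)$, the probability that $G^{(\ell)}(N,p)\subset C$ is $p^{|C_R\setminus C_B|}(1-p)^{|C_B\setminus C_R|}=2^{-H_p(C)}$, and the union bound reduces everything to a supersaturation lemma for 2-coloured multigraphs (the paper's Lemma~\ref{lem:ind_supersaturation}, stated with proof deferred as ``essentially the same as'' Erd\H{o}s--Simonovits). You correctly identify that lemma as the real content.

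Your lower bound, however, has a genuine flaw as written. You choose $C^\ast$ subject only to having $o(N^{v(H)})$ copies of $H$, and then claim $I$ is induced-$H$-free ``with positive probability (or after a cheap deletion step), since the expected number of induced copies of $H$ in $I\subset C^\ast$ is $o(N^{v(H)})$.'' Neither alternative works. An expected count of $o(N^{v(H)})$ copies is far from implying positive probability of zero copies: with, say, $N^{v(H)}/\log N$ potential copies each appearing with constant conditional probability, the probability of avoiding all of them can be as small as $2^{-\Theta(N^{v(H)}/\log N)}$, which is vastly smaller than the target $2^{-O(N^{\ell})}$ and destroys the bound. And a deletion step is meaningless here: you are lower-bounding the probability that the random graph \emph{itself} is induced-$H$-free, so you cannot modify it after sampling. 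The fix is the one the paper builds into the definition: set $\mbox{hex}(H,N)=\min\{H_p(J): J_R\cup J_B=[N]^{(\ell)},\ H\not\subset J\}$, i.e.\ minimise over templates containing \emph{no} copy of $H$ whatsoever. Then $\{G^{(\ell)}(N,p)\subset J\}$ deterministically forces induced-$H$-freeness, and the lower bound $2^{-\mbox{\scriptsize hex}(H,N)}$ is immediate from the definition of $h_p(H)$; the supersaturation lemma is what guarantees that this exact-freeness minimum agrees, up to $o\bigl(\binom{N}{\ell}\bigr)$, with the ``few copies'' relaxation you need on the upper-bound side. (Your blow-up templates are in fact exactly $H$-free, so your construction survives --- but the justification you give for it does not.)
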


For graphs, that is, $\ell=2$, this theorem was proved for $p=1/2$ by Pr\"omel
and Steger~\cite[Theorem~1.3]{PS} and for general $p$ by Bollob\'as and
Thomason~\cite[Theorem~1.1]{BT} (clarified by Marchant and
Thomason~\cite{MT2}). For $p=1/2$ and $\ell=3$ it was proved  by 
Kohayakawa, Nagle and R\"odl~\cite{KNR} using hypergraph regularity
techniques, 
Dotson and Nagle~\cite{DN} extending this to general~$\ell$.

It can be imagined that arguments similar to those described here
could be used to obtain container results about other structures, such as
tournaments.

\subsection{Linear equations}\label{subsec:sumfree}

Let $F$ be either a finite field or the set of integers
$[N]$.  We consider linear systems of equations $A x = b$, where $A$ is a
$k \times r$ matrix with entries in $F$, $x \in F^r$ and $b \in F^k$.  We
call such a triple $(F,A,b)$ a \emph{$k\times r$ linear system}.

\begin{defn}\label{defn:exteqnoX}
For a $k\times r$ linear system $(F,A,b)$, a subset $I \subset F$ is
\emph{solution-free} if there is no $x \in I^r$ with $Ax=b$,
and $\mbox{ex}(F,A,b)$ is the maximum size of a solution-free subset.
\end{defn}

The notion of a solution-free subset is analogous to that of an $H$-free
hypergraph in~\S\ref{subsec:hfree}. Once again, our contribution to this
topic is a container theorem for solution-free sets. It is obtained by
constructing a hypergraph $G$ whose independent sets correspond to
solution-free sets, after which a simple check of some parameter of $G$
allows the container theorem to be applied. A precise statement, however,
requires one or two technical definitions, so we omit it from here and
refer the reader to~\cite{STa}.

Nevertheless we mention a consequence for counting solution-free subsets.
For an equation $Ax=b$, how many solution-free subsets of $F$ are there?  A
well-known instance of this question is to find the number of subsets
$S\subset[N]$ containing no solution to $x+y=z$; the asymptotic answer,
conjectured by Cameron and Erd\H{o}s~\cite{CE}, was given by Green~\cite{G0}
and by Sapozhenko~\cite{Sap3}. 

For a general system, every subset of a
solution-free set is itself solution-free, so there are at least
$2^{\rm{ex}(F,A,b)}$ solution-free sets. For a single equation (the case
$k=1$), it was shown by Green~\cite{G} that there are at most
$2^{\rm{ex}(F,A,b)+o(|F|)}$ solution-free subsets; Sapozhenko
too~\cite{Sap5} has results of this kind.

The same bound does not always hold for $k\ge2$. If some variables are
closely tied to other variables --- say the equations imply that
$x=y$ --- then there can be significantly more than $2^{\rm{ex}(F,A,b)}$
solution-free sets. However, a (perhaps non-standard but) natural
condition on~$A$ rules out closely tied variables, and in this case the stated
bound holds good.

\begin{defn}
 We say that $A$ has \emph{full rank} if given any $b \in
F^k$ there exists $x \in F^r$ with $Ax=b$. We then say that $A$ is
\emph{abundant} if it has full rank and every $k \times (r-2)$ submatrix
obtained by removing a pair of columns from $A$ still has full rank.
\end{defn}

\begin{thm}\label{thm:eqn_countnoX}
There is a function $f:\mathbb{N}\to\mathbb{R}$ with $f(n)=o(n)$ such that
if $F$ is a finite field and $(F,A,b)$ is a $k \times r$ linear system with
$A$ abundant, then the number of solution-free subsets of $F$ is at most
$2^{\rm{ex}(F,A,b)+f(|F|)}$.

Likewise, for each fixed integer matrix $A$, there is a function
$g:\mathbb{N}\to\mathbb{R}$ with $g(n)=o(n)$ such that if $([N],A,b)$ is a
$k \times r$ linear system with $A$ abundant, then the number of
solution-free subsets of $[N]$ is at most $2^{\rm{ex}([N],A,b)+g(N)}$.
\end{thm}

For example, take $A = (1, 1, -1)$ and $b = (0)$.
Theorem~\ref{thm:eqn_countnoX} says that the number of sum-free subsets of
$[N]$ is $2^{N/2+o(N)}$, giving a new proof of the weak form of the
Cameron-Erd\H{o}s conjecture, proved independently by Alon~\cite{A3}, by
Calkin~\cite{C} and by Erd\H{o}s and Granville
(unpublished). Interestingly, whilst our container method for 2-graphs is
closely related to arguments of Sapozhenko in~\cite{Sap3}, our derivation
of the weak Cameron-Erd\H{o}s conjecture is via 3-uniform hypergraphs and
differs from that in~\cite{Sap3}.

Similar results hold when $F$ is an abelian group.  For the proof of
Theorem~\ref{thm:eqn_countnoX} we need the fact that a subset containing
few solutions is close in size to a solution-free subset. 
There appears to be no analogue to the simple supersaturation results that
helped us at similar points in~\S\ref{subsec:hfree}
and~\S\ref{subsec:ihfree}, so here we invoke the various removal lemmas
of Shapira~\cite{Sh} and of Kr\'al', Serra and Vena~\cite{KSV1,KSV2},
extending Green's original lemma~\cite{G}.

For linear systems where $\mbox{ex}(F,A,b)=o(|F|)$,
Theorem~\ref{thm:eqn_countnoX} is uninformative.
One of the most prominent examples is that
of Sidon sets. A set $A \subset [n]$ is \emph{Sidon} if every sum of two
elements is distinct, i.e., there are no solutions to $w+x=y+z$ with
$\{w,x\}\ne\{y,z\}$.
It is easy to see that a Sidon set has size at most $\lceil\sqrt{2n}\rceil$,
since each of the $|S|(|S|-1)/2$ values $x-y$, where $x,y \in S$ and $y<x$,
are distinct and lie in $\{1,\ldots,n-1\}$. Erd\H{o}s and Tur\'an~\cite{ET}
improved this upper bound to $|S|\le (1+o(1))\sqrt{n}$,
and there are examples achieving this bound.

It is natural to ask, as Cameron and Erd\H{o}s did~\cite{CE}, how many
Sidon sets there are, and the answer clearly lies between
$2^{(1+o(1))\sqrt{n}}$ and $2^{O(\sqrt n\log n)}$. Neither of these bounds,
it turns out, is tight.

\begin{thm}\label{thm:sidon}
There are between $2^{(1.16+o(1))\sqrt{n}}$ and $2^{(55+o(1))\sqrt{n}}$
Sidon subsets of~$[n]$.
\end{thm}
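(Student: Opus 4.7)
The plan is to realise Sidon subsets of $[n]$ as the independent sets of the $4$-uniform hypergraph $G$ on vertex set $[n]$ whose edges are the $4$-element sets $\{w,x,y,z\}$ admitting a partition into two pairs with equal sum (equivalently, solutions to $w+x=y+z$ in four distinct variables, up to reordering). Both bounds in Theorem~\ref{thm:sidon} then reduce to enumeration estimates for independent sets of~$G$.

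For the upper bound I would first record the codegree profile of~$G$: a direct count gives $e(G)=\Theta(n^{3})$, maximum vertex degree $O(n^{2})$, and maximum pair-codegree $O(n)$ (a pair $\{u,v\}$ either occurs as one of the two equal-sum pairs, contributing $O(n)$ completions $\{y,z\}$ with $y+z=u+v$, or is split across them, in which case the edge is forced to be $\{u,x,v,u+x-v\}$ and $x$ runs over $[n]$), while the maximum triple-codegree is $O(1)$. Feeding these estimates into the iterative container corollary of \S\ref{sec:iteration}--\S\ref{sec:uniform} applied to~$G$, I would extract a family $\C$ of subsets of $[n]$ such that every Sidon subset of $[n]$ lies in some $C\in\C$, with $|C|\le 55\sqrt{n}$ for every $C\in\C$ and $\log_{2}|\C|=o(\sqrt{n})$; the upper bound $|\{S\text{ Sidon}\}|\le|\C|\cdot 2^{55\sqrt{n}}$ is then immediate. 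The main obstacle here is quantitative: each iteration trades a logarithmic factor in $|\C|$ for a polynomial shrinkage of each container, and one has to tune the number of rounds together with the internal thresholds so that, with the codegree data above, the absolute constant on container size is kept below~$55$ rather than blowing up through a naive application.

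For the lower bound, the subsets of any single extremal Sidon set account for only $2^{(1+o(1))\sqrt{n}}$ Sidon subsets of $[n]$, so the exponent~$1.16$ must come from combining many mutually overlapping Sidon sets. My approach would be a random-deletion construction: fix $m=c\sqrt{n}$ slightly below $\sqrt{n}$, so that a uniformly random $m$-subset $R$ of $[n]$ contains only $O(m)$ violating quadruples in expectation; delete one vertex from each violation to obtain a Sidon set $S_{R}$ of size $(1-o(1))m$, and then bound the multiplicity with which a given Sidon set $S$ arises as $S_{R}$ for some~$R$. Optimising $c$ against this multiplicity bound yields the explicit constant~$1.16$. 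The main difficulty on this side is the multiplicity estimate: one must show that a typical Sidon set of size close to $\sqrt{n}$ is the deletion image of at most $2^{o(\sqrt{n})}$ random $m$-sets, which in turn rests on a careful analysis of how often the deletion step fuses distinct random subsets into the same Sidon output.
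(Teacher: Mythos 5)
Your general framework is right — encode Sidon subsets as independent sets of the $4$-graph $G$ on $[n]$ with an edge on each $\{w,x,y,z\}$ with $w+x=y+z$, and apply the iterated container machinery of~\S\ref{sec:iteration} — and your codegree estimates (pair-codegree $O(n)$, triple-codegree $O(1)$) match the paper's. However both halves of your argument have genuine gaps.

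\textbf{Upper bound.} You ask for containers with $|C|\le 55\sqrt{n}$, but the container machinery cannot deliver a constant that small: in Theorem~\ref{thm:iteration} one needs $\tau(U)\le 1/144r!^2 r$, and with $r=4$ this forces $e(G[U])\ge e_0 = \Theta(\beta^4 n)$ and hence $|U|\ge u_0=\beta\sqrt{n}$ with $\beta$ enormous (the paper takes $\beta=3\times 10^{14}$; even with optimization the $r!$ factors force $\beta\gtrsim 10^5$). If one then bounded Sidon subsets of $C$ by \emph{all} subsets of $C$, as in your step $|\mathcal{S}|\le|\C|\,2^{|C|}$, the resulting exponent would be $\beta$, not $55$. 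The missing idea is that Sidon sets themselves have size at most $(1+o(1))\sqrt{n}$, so one counts only subsets of $C$ of that size. By Lemma~\ref{lem:entropy} this is at most $\exp\{\theta u_0(1+\log(1/\theta))\}$ with $\theta=(1+o(1))/\beta$, i.e.\ about $\exp\{(1+\log\beta+o(1))\sqrt{n}\}$ — depending on $\beta$ only through $\log\beta$. Combining with $\log|\C|\le (7/2+o(1))\sqrt{n}$ gives $1+\log\beta+7/2 < 55\log 2$. Tuning the iteration to shrink $\beta$ to $55$ is not an option; exploiting the $\sqrt{n}$ bound on $|S|$ to tame $\beta$ logarithmically is the essential step you are missing.

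\textbf{Lower bound.} The random-deletion plan does not produce Sidon sets of size near $\sqrt{n}$. With $m=c\sqrt{n}$, the number of equal-sum quadruples $\{w,x,y,z\}\subset[n]$ is $\Theta(n^3)$, so a uniformly random $m$-subset contains $\Theta(n^3\cdot(m/n)^4)=\Theta(m^4/n)=\Theta(c^4 n)$ violations in expectation — vastly more than $m=\Theta(\sqrt{n})$, not $O(m)$ as you claim. Deleting one vertex per violation destroys essentially the whole set; the deletion method only yields Sidon sets of size $O(n^{1/3})$, which is too small to give any nontrivial exponent at scale $\sqrt{n}$. The paper instead uses an algebraic construction: take Ruzsa's modular Sidon set $S\subset[p(p-1)]$ of size $p-1$, choose four pairwise disjoint subsets $U_1,\dots,U_4\subset S$, and place the translate $U_i+(i-1)p(p-1)$ in the $i$th block of $[4p(p-1)]$. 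Each choice is Sidon, giving $5^{p-1}=2^{(\log_2\sqrt{5}+o(1))\sqrt{n}}=2^{(1.16+o(1))\sqrt{n}}$ distinct Sidon subsets. The constant $1.16$ is exactly $\log_2\sqrt{5}$ and comes from this algebraic translate trick, not from deletion.
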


The lower bound gives a negative answer to the open question of whether
there are only $2^{(1+o(1))\sqrt{n}}$ Sidon sets. The upper bound follows
from an application of Theorem~\ref{thm:iteration}, similar to that in the
proof of Corollary~\ref{cor:C4}. Kohayakawa, Lee, R\"odl and
Samotij~\cite{KLRS} have obtained an upper bound of the same kind and with
a better constant. For details see~\cite{STa}.

\subsection{Sparsity}\label{subsec:sparse}

In recent times, there has been interest in the extent to which theorems
holding for dense structures hold also for sparse random
substructures. Our results can be applied in this context, and we give some
illustrative examples involving the notions of $H$-free graphs and
solution-free subsets already discussed.

The application of our results always fits a simple paradigm. Typically we want
some statement to hold for a random substructure, with high probability; by
considering an appropriate collection of containers, the fact that there are
a small number of containers means that the work is reduced, via the union
bound, to establishing a (generally much simpler) statement for a single
container.

For example, consider a random $\ell$-graph $G^{(\ell)}(N,p)$, as defined
in~\S\ref{subsec:ihfree}.
Evidently there are $H$-free subgraphs of $G^{(\ell)}(N,p)$ with $p\,{\rm
  ex}(N,H)$ edges, but are there significantly larger $H$-free subgraphs?
It was conjectured by Haxell, Kohayakawa and \L{u}czak~\cite{HKL1,HKL2},
and by Kohayakawa, \L{u}czak and R\"odl~\cite{KLR}, that if
$pN^{1/m(H)}\to\infty$ then $H$-free subgraphs of $G^{(\ell)}(N,p)$ almost surely
have at most $(1+o(1))p\,{\rm ex}(N,H)$ edges. This conjecture was recently
proved by Conlon and Gowers~\cite{CG} (for strictly balanced~$H$) and by
Schacht~\cite{Sch}, using different methods.  Our methods give an alternative
proof. For each container $C\in\C$ given by Theorem~\ref{thm:ffree_cover},
it is easily seen that, with high probability, $G^{(\ell)}(N,p)$ contains not
much more than $p e(C)\le (\pi(H) +o(1))p{N \choose \ell}$ edges of~$C$.
By the union bound this holds for all $C\in\C$, and hence also for all $H$-free
$\ell$-graphs.

\begin{thm}\label{thm:ffree_sparse}
Let $H$ be an $\ell$-graph and let $0<\gamma<1$.
For some $c>0$, for $N$ sufficiently large and for $p \ge cN^{-1/m(H)}$,
the following event holds with probability greater than 
$1-\exp\{-\gamma^3 p {N \choose \ell} / 512 \}$:
$$
\mbox{every $H$-free subgraph of $G^{(\ell)}(N,p)$ has at most 
$(\pi(H) + \gamma) p{N \choose \ell}$ edges.}
$$
\end{thm}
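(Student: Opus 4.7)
The plan is to combine Theorem~\ref{thm:ffree_cover} with a Chernoff estimate and a union bound, essentially as flagged in the discussion immediately preceding the statement. I apply Theorem~\ref{thm:ffree_cover} with parameter $\epsilon=\gamma/2$ to obtain a collection~$\C$ of $\ell$-graphs on $[N]$ satisfying (a)--(d); write $G=G^{(\ell)}(N,p)$. Every $H$-free subgraph $I$ of $G$ is in particular an $H$-free $\ell$-graph on $[N]$, so by (a) it is contained in some $C\in\C$, and therefore its edge set lies inside $E(C)\cap E(G)$. The theorem will follow once I show that, with the stated failure probability, $|E(C)\cap E(G)|\le(\pi(H)+\gamma)p\binom{N}{\ell}$ holds simultaneously for every $C\in\C$.

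For a fixed $C\in\C$ the random variable $X_C=|E(C)\cap E(G)|$ is a sum of $e(C)$ independent Bernoulli$(p)$ variables, so by part~(c),
\[
\mathbb{E} X_C = p\,e(C)\le\bigl(\pi(H)+\gamma/2\bigr)p\tbinom{N}{\ell}.
\]
A standard one-sided Chernoff bound with additive deviation $t=(\gamma/2)p\binom{N}{\ell}$ then yields
\[
\mathbb{P}\bigl(X_C>(\pi(H)+\gamma)p\tbinom{N}{\ell}\bigr)\le\exp\bigl\{-c_1\gamma^2\,p\tbinom{N}{\ell}\bigr\}
\]
for an absolute constant $c_1>0$. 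Since $\gamma<1$, this comfortably implies the bound $\exp\{-\gamma^3 p\binom{N}{\ell}/256\}$, leaving a little slack to absorb the union bound coming next.

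The final step is the union bound. Part~(d) of Theorem~\ref{thm:ffree_cover} gives $\log|\C|\le cN^{\ell-1/m(H)}\log N$, whereas the Chernoff exponent above is of order $\gamma^2 p\binom{N}{\ell}$, and under the hypothesis $p\ge cN^{-1/m(H)}$ this is at least a constant multiple of $c\gamma^2 N^{\ell-1/m(H)}$. Choosing the constant $c$ in the hypothesis sufficiently large as a function of~$\gamma$ makes the Chernoff exponent dominate both $\log|\C|$ and the target exponent $\gamma^3 p\binom{N}{\ell}/512$ with room to spare; a union bound over~$\C$ then completes the proof.

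No real obstacle presents itself: the deep work has been done in Theorem~\ref{thm:ffree_cover}, and the remainder is routine concentration-of-measure bookkeeping. The only delicate point is balancing the Chernoff exponent, the $\log|\C|$ contribution from part~(d), and the target $\gamma^3/512$ coefficient in the statement by a single, $\gamma$-dependent choice of the absolute constant~$c$; this is an arithmetic exercise rather than a substantive difficulty, and it is the reason the theorem requires $p$ to be at least a $\gamma$-dependent multiple of $N^{-1/m(H)}$.
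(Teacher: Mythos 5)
Your proposal has a genuine gap at the union bound step, and it is precisely the gap that the paper's Lemma~\ref{lem:sparse} (and property~(b) of Theorem~\ref{thm:ffree_cover}) exist to close. Your Chernoff bound gives a failure exponent of order $\gamma^2 p\binom{N}{\ell}$, which at the threshold $p=cN^{-1/m(H)}$ is of order $c\gamma^2 N^{\ell-1/m(H)}$. But part~(d) of Theorem~\ref{thm:ffree_cover} only gives $\log|\C|\le c'N^{\ell-1/m(H)}\log N$, and that extra $\log N$ factor is not a technicality: for any fixed constant $c$ in the hypothesis and $N$ large, $\log|\C|$ strictly dominates the Chernoff exponent, so the naive union bound $|\C|\cdot\exp\{-c_1\gamma^2 p\binom{N}{\ell}\}$ tends to infinity rather than to zero. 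Increasing $c$ does not help, because the constant $c'$ in the bound on $\log|\C|$ is fixed once $H$ and $\epsilon$ are fixed, and no constant can absorb an unbounded $\log N$. So your claimed "room to spare" does not exist, and the argument as written does not prove the theorem at the stated threshold.

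The missing idea is to exploit property~(b) of Theorem~\ref{thm:ffree_cover}, namely that the tuple $T=(T_1,\dots,T_s)$ specifying the container satisfies $T_i\subset I$. Since $I\subset X$, the tuple $T_I$ corresponding to any $H$-free subgraph of the random graph also lies inside $X$. One should therefore bound, for each possible $T$, the probability of the joint event ``$J(T)\subset X$ and $|C(T)\cap X|$ is large,'' rather than just the latter. The factor $\mathbb{P}(J(T)\subset X)=p^{|J(T)|}$ decays rapidly with $|J(T)|$, and when the union bound is taken over $T$ this factor cancels the entropy of choosing $T$ (which is the origin of the $\log N$), giving a total of the form $\exp\{\nu^2 pd/32\}$ rather than $|\C|$. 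This is exactly Lemma~\ref{lem:sparse} in the paper; the proof of Theorem~\ref{thm:ffree_sparse} consists of applying that lemma with $D(T)=C(T)$, $\nu=\gamma/2$, $d=(\pi(H)+\gamma/4)\binom{N}{\ell}$, $s$ the constant from Theorem~\ref{thm:ffree_cover}, and $t=c'N^{\ell-1/m(H)}$, checking $\phi t/n\le p$ (which needs $c$ large), and then simplifying $(1+\nu)pd\le(\pi(H)+\gamma)p\binom{N}{\ell}$. Your Chernoff estimate is essentially the ``$G_T$'' half of that lemma; the half you are missing is the $p^{|J(T)|}$ weighting coming from $T\subset X$, which is what makes the union bound close.
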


Kohayakawa, \L{u}czak and R\"odl~\cite{KLR} further conjectured a stability
version of Theorem~\ref{thm:ffree_sparse}, proved by Conlon and
Gowers~\cite{CG} for strictly balanced graphs and by Samotij~\cite{S2},
following Schacht~\cite{Sch}, for all graphs.
They also made a stronger,
technical, conjecture which has become known as the K\L{R} conjecture,
proved recently for balanced graphs by Balogh, Morris and
Samotij~\cite{BMS}. Theorem~\ref{thm:ffree_cover} can be used to derive all
these conjectures in a straightforward way, and indeed a counting version
of Theorem~\ref{thm:ffree_cover} (Theorem~\ref{thm:ffree_cover_heavy})
yields a counting version of the K\L{R} conjecture. Because of the
technical descriptions needed, and the fact that these results are
consequences of Theorem~\ref{thm:ffree_cover}, we defer further details
to~\S\ref{sec:sparse}.

The same arguments can be applied to solution sets of linear
equations. Here is a typical consequence.

\begin{thm}[Conlon and Gowers~\cite{CG},
  Schacht~\cite{Sch}]\label{thm:szem_sparse}
Let $\ell\ge3$ and $\epsilon>0$. There exists a constant $c>0$ such that
for $p\ge cN^{-1/(\ell-1)}$, if $X \subset [N]$ is a random
subset chosen with probability $p$, then with probability tending to $1$ as
$N \to \infty$, any subset of $X$ of size $\epsilon|X|$ contains an arithmetic
progression of length $\ell$.
\end{thm}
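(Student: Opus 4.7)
The plan is to specialise the container machinery for linear systems (Theorem~\ref{thm:eqn_cover}, referred to in~\S\ref{subsec:sumfree}) to the linear system that describes $\ell$-term arithmetic progressions, i.e.\ the $(\ell-2)\times\ell$ system $x_i-2x_{i+1}+x_{i+2}=0$ for $i=1,\ldots,\ell-2$. Applied to this system over $F=[N]$, the container theorem should produce a collection $\mathcal{C}$ of subsets of $[N]$ such that (i) every AP-$\ell$-free subset of $[N]$ lies in some $C\in\mathcal{C}$; (ii) every $C\in\mathcal{C}$ contains at most $\delta N^2$ $\ell$-APs, for $\delta$ chosen below; (iii) each $C$ is determined by a ``fingerprint'' $T\subset I$ of size at most $c_1 N^{1-1/(\ell-1)}$ (the exponent $\ell-1$ arising because fixing two terms of an AP determines it, which is the degeneracy parameter for this system).

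Next I would feed in a supersaturation input to convert ``few APs'' into ``small size'': by Varnavides's strengthening of Szemer\'edi's theorem, there exists $\delta_0=\delta_0(\epsilon)>0$ such that any subset of $[N]$ of size at least $\epsilon N/4$ contains at least $\delta_0 N^2$ $\ell$-APs. Taking $\delta<\delta_0$ in~(ii) forces $|C|<\epsilon N/4$ for every $C\in\mathcal{C}$. Now suppose $X=[N]_p$ is the random subset and $Y\subset X$ is AP-$\ell$-free with $|Y|\ge\epsilon|X|$. Then $Y\subset C$ for some container, so $|X\cap C|\ge\epsilon|X|$. Chernoff gives $|X|\ge pN/2$ with probability $1-o(1)$, while for each fixed $C$ we have $\mathbb{E}|X\cap C|\le\epsilon pN/4$ and hence $\mathbb{P}(|X\cap C|\ge\epsilon pN/2)\le\exp(-c_2\epsilon pN)$.

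The only remaining obstacle is the union bound, which is where the hypothesis $p\ge cN^{-1/(\ell-1)}$ must just barely suffice. The crude estimate $\log|\mathcal{C}|\le c_1 N^{1-1/(\ell-1)}\log N$ would demand an extra logarithmic factor in~$p$; to avoid this I would exploit the fingerprint property~(iii), which says $C=C(T)$ with $T\subset I\subset X$. Rather than union-bounding over all $C\in\mathcal{C}$, I sum over fingerprints $T$ that actually lie inside~$X$: the expected number of such $T$ of size~$s$ is at most $\binom{pN}{s}\le(epN/s)^s$, which for $s=c_1N^{1-1/(\ell-1)}$ and $p\ge cN^{-1/(\ell-1)}$ is of order $\exp(O(s))=\exp(O(N^{1-1/(\ell-1)}))$, with no logarithm. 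Combining with the Chernoff bound $\exp(-c_2\epsilon pN)=\exp(-\Omega(\epsilon\, cN^{1-1/(\ell-1)}))$ and choosing $c$ large enough in terms of $\epsilon$ makes the total probability tend to zero. The main technical point is thus this fingerprint-in-$X$ bookkeeping; everything else is a routine assembly of the container theorem, Varnavides, and Chernoff.
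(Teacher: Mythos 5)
Your proposal is correct and is essentially the paper's own argument: the paper deduces the result by applying Theorem~\ref{thm:eqn_sparse} to the $(\ell-2)\times\ell$ arithmetic-progression system (noting $m_{[N]}(A)=\ell-1$ and $\mathrm{ex}([N],A,b)=o(N)$), and that theorem is proved by exactly your combination of the container theorem for linear systems, a Chernoff bound on $|X\cap C|$, and the fingerprint-indexed union bound over those $T$ with $T\subset X$ (Lemma~\ref{lem:sparse}), which is precisely how the spurious $\log N$ factor is avoided. The only cosmetic difference is that you bound the container size via Varnavides-type supersaturation applied to the ``few APs'' condition, whereas the paper obtains $|C|\le\mathrm{ex}([N],A,b)+\epsilon N=o(N)$ from the Kr\'al'--Serra--Vena removal lemma built into Theorem~\ref{thm:eqn_cover}(d) together with Szemer\'edi's theorem; these are interchangeable inputs.
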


Further examples and details can be found in~\cite{STa}.

\section{Containers}\label{sec:containers}

A couple of simple notions are needed for the statement of the main
theorem, and we define these now. They are the co-degree function and
degree measure. The co-degree function is what will determine the number
$|\C|$ of containers needed. The size of individual containers will be
specified in terms of degree measure. 

\subsection{The co-degree function $\delta(G,\tau)$}\label{subsec:codeg}

The main difficulties in the construction of containers are already present
in the case of simple hypergraphs, where the authors' original motivation
lay. However the method can be adapted efficiently to any hypergraph. The
number of containers we construct (and to a much lesser extent their size)
depends on the way the edges overlap, but the dependence can be
encapsulated by a single parameter. This parameter appears in most of the
theorems.

The theorems are stated in terms of a parameter $\tau$, whose meaning will
become clearer later, but for now it is enough to say that the number
$|\C|$ of containers constructed will be approximately $2^{\tau n}$. It is
evident, then, that we shall want $\tau$ to be as small as possible. What
determines how small $\tau$ can be is a bound on the {\em co-degree
  function} $\delta(G,\tau)$. This function is usually quite
straightforward to compute; it is just a polynomial in $1/\tau$ whose
coefficients are expressed in terms of the edge overlaps in~$G$.

Here is the precise definition. We first define the degree of a subset of
vertices, in the natural way.

\begin{defn} The {\em degree} of a set of vertices $\sigma\subset V(G)$ is
  the number of edges containing $\sigma$; that is,
  $$
  d(\sigma)\,=\,|\{\,e\in E(G)\,:\, \sigma \subset e\,\}|\,.
  $$
  If $|\sigma|=1$, that is $\sigma=\{v\}$ where $v\in V(G)$, we generally
  write $d(v)$ instead of $d(\{v\})$.
\end{defn}

We can now define the co-degree function $\delta(G,\tau)$.

\begin{defn}\label{defn:delta}
  Let $G$ be an $r$-graph of order $n$ and average
  degree~$d$. Let $\tau>0$. Given $v\in V(G)$ and $2\le j\le
  r$, let
$$
d^{(j)}(v)\,=\,\max\,\{\,d(\sigma)\,:\,v\in\sigma\subset
V(G),\,|\sigma|=j\,\}\,.
$$
If $d>0$ we define $\delta_j$ by the equation 
$$\delta_j\,\tau^{j-1}nd\,=\,\sum_v \,d^{(j)}(v)\,.$$
Then the {\em co-degree function} $\delta(G,\tau)$ is defined by
$$
\delta(G,\tau)\,=\,2^{\binom{r}{2}-1}\sum_{j=2}^r\,
2^{-\binom{j-1}{2}}\delta_j\,.
$$
If $d=0$ we define $\delta(G,\tau)=0$.
\end{defn}

The powers of 2 in the definition are rather eye-catching but they are
a distraction; they are constants
introduced to make Lemma~\ref{lem:mucont} work smoothly
(see the comment in~\S\ref{sec:postscript}). It does no harm for now to 
ignore them and to think of $\delta(G,\tau)$ as $\sum \delta_j$
or even as $\max\delta_j$.

Given a hypergraph $G$, the degree function $\delta(G,\tau)$ is a polynomial in
$1/\tau$ with positive coefficients (provided $e(G)\ne0$); in particular
$\delta(G,\tau)$ increases to infinity as $\tau$ decreases to zero. One of
the conditions of the main theorem, Theorem~\ref{thm:cover}, and of most of
the other theorems, is an upper bound on $\delta(G,\tau)$, which clearly is
equivalent to a lower bound on~$\tau$. 

It is helpful to have some feel for what values $\tau$ might take, and here
are some observations intended to indicate what happens. A typical
application will involve making $\delta(G,\tau)$ less than some
constant, never larger than $1/r!$, so let us see what this implies
for~$\tau$.

First of all, consider the simplest case, that of an ordinary graph, when
$r=2$.  Then $d^{(2)}(v)=0$ or~$1$, so
$\delta_2\tau nd\le n$, that is, $\delta_2\le 1/\tau d$. Hence
$\delta(G,\tau)=\delta_2\le 1/\tau d$. Thus $\delta(G,\tau)$ is small
provided $\tau$ is larger than $1/d$.

For general~$r$, observe that, unless $G$ has isolated vertices,
$d^{(j)}(v)\ge1$ holds for all~$v$, and so $\delta_j\ge \tau^{1-j}/d$. The
largest of these bounds is $\delta_r\ge \tau^{1-r}/d$ ($\tau$ is invariably
less than one) and so, for fixed~$r$ and large~$d$, it will always be that
for $\delta(G,\tau)$ to be small we must choose $\tau$ at least as large
as~$d^{-1/(r-1)}$.

In a simple hypergraph, $d(\sigma)\le 1$ holds whenever $|\sigma|\ge2$, and
so $\delta_j\le \tau^{1-j}/d$. In this case the largest of the $\delta_j$'s
is $\delta_r$, and we can make $\delta(G,\tau)$ small by choosing $\tau$
just a little larger than $d^{-1/(r-1)}$. In fact, for any hypergraph whose
edges are sufficiently uniformly distributed, $\delta_r$ is once again the
$\delta_j$
which dominates, as a simple calculation (which we omit) shows, so here
again $\delta(G,\tau)$ is small provided $\tau$ is larger than $d^{-1/(r-1)}$.

Sometimes, though, the dominant $\delta_j$ is not $\delta_r$. One example
of this is in the case of Sidon sets: when $|S|<n^{2/3}$ it is the value of
$\delta_2$ which is the most important (see~\cite{STa}). Another example is
the hypergraph describing $H$-free $\ell$-graphs: here the most important
$\delta_j$ is determined by whichever subgraph $H^\prime\subset H$ achieves the
maximum of $(e(H^\prime)-1)/(v(H^\prime)-\ell)$, and this is how $m(H)$
enters in (see Lemma~\ref{lem:ffree_delta}). But in each of our examples
the values are easily checked.

In summary, we must always choose $\tau\ge d^{-1/(r-1)}$, and for simple or
uniformly distributed hypergraphs the value need not be much larger. But
there are applications which are far from uniformly distributed, where
$\tau$ needs to be larger and where the behaviour of $\delta(G,\tau)$ will
prove crucial.

\subsection{Degree measure}\label{subsec:mu}

We mentioned in the introduction that the containers must not be too
large. For some applications it suffices that $|C|\le (1-c)|G|$ for some
constant~$c$. This is achievable for regular hypergraphs but it clearly is
unattainable in general; for example, if $G=K_{d,n-d}$ (which, for
large~$n$, has average degree close to~$2d$) then some container must have
size at least $n-d$. Other applications require that the number of edges
inside a container, that is, $e(G[C])$, is small. This is always
attainable, but a bound on $e(G[C])$ does not of itself imply a bound
on~$|C|$ suitable for the first kind of application.

We in fact measure the size of containers by what we call {\em degree
  measure}. It turns out that if the degree measure is bounded then it is
possible to recover all the properties of containers that are needed.

\begin{defn}\label{def:mu}
Let $G$ be an $r$-graph of order~$n$ and average degree~$d$. Let $S\subset
V(G)$. The {\em degree measure} $\mu(S)$ of $S$ is defined by
$$
\mu(S)\,=\,\frac{1}{nd}\,\sum_{u\in S}d(u)\,.
$$
\end{defn}

Thus $\mu$ is a probability measure on $V(G)$. Note that if $G$ is regular
then $\mu(S)=|S|/n$, which is the {\em uniform} measure of~$S$. Thus a
bound on $\mu(S)$ automatically gives a bound on $|S|$ for regular
graphs. For general graphs, obtaining a useful bound on $|S|$ from a bound
on $\mu(S)$ is a little more indirect (Lemma~\ref{lem:g}).

The dependence of $e(G[S])$ on $\mu(S)$ is much more straightforward,
by reason of the following inequality, in which $G$ is an $r$-graph
of order $n$ and average degree~$d$:
\begin{equation}
e(G[S])\le (1/r)\sum_{v\in  S}d(v)=(1/r)\mu(S)nd = \mu(S)e(G)\,.
\label{eqn:mutoe}
\end{equation}
Hence a bound on $\mu(S)$ at once gives a bound on $e(G[S])$.

We mentioned at the outset of the paper that each container should not be
too large. In a regular $r$-graph it is easily shown that $|I|\le (1-1/r)n$
for every independent set~$I$. However, for general $r$-graphs the ratio
$|I|/n$ can be arbitrarily close to one. An important feature of degree
measure is that the bound $\mu(I)\le 1-1/r$ holds for all independent sets
in all $r$-graphs (see inequality~(\ref{eqn:mubig}) and the
remark following it). So we might hope that every $r$-graph has containers
with $\mu(C)$ bounded away from one, and this is exactly how things turn
out. A bound of this kind is enough to meet our needs. In the next
subsection we state the main theorem, and afterwards,
in~\S\ref{subsec:tight} and~\S\ref{subsec:uniformly}, we indicate how it
can yield containers either with $e(G[C])$ small or with $|S|$ small.

\subsection{The main theorem}\label{subsec:mainthm}

The essential idea underlying the main theorem is this: there is an
algorithm that, from any small set $T\subset V(G)$ of vertices,
produces another subset $C=C(T)\subset V(G)$. Typically, $C(T)$ is much
larger than~$T$, but it is guaranteed that $\mu(C)$ is bounded away from
one. Moreover, and importantly, for any independent set~$I$, there is some
small subset $T\subset I$ such that $I\subset C(T)$.

Observe now that if we define $\C$ to be the collection of all sets $C(T)$
produced from small sets~$T$, then this collection $\C$ is a collection of
containers having exactly the properties we want. The construction
guarantees that for each independent set $I$ there is a container $C\in\C$
with $I\subset C$, with $\mu(C)$ bounded away from one, and the number
$|\C|$ of containers is at most the number of small sets, which gives a
useable upper bound on~$|\C|$.

We already introduced the parameter~$\tau$. This parameter will measure how
big the sets $T$ must be in order for the theorem to work: essentially,
$\tau$ will be the value of $\mu(T)$. For regular $G$ this means $|T|=\tau
n$, and the number of sets of this size is ({\em very} approximately)
$2^{\tau n}$, so explaining the bound $|\C|\le 2^{\tau n}$ referred to
in~\S\ref{subsec:codeg}. This is why we want $\tau$ to be as small as
possible.

The theorem includes a parameter $\zeta$, which is some small constant at
our disposal. Often we shall take $\zeta=1/12r!$ but sometimes it is useful
(such as in the list colouring application)
to choose a smaller value. The constraint on the size of $\tau$ in the
theorem arises from the requirement that $\delta(G,\tau)\le \zeta$. As
discussed in~\S\ref{subsec:codeg}, this inequality implies a lower bound
on~$\tau$.  We would thus want to take $\zeta$ as large as possible, but
taking it too large spoils the bound on $\mu(C)$. The choice $\zeta=1/12r!$
generally works well. Recall from the discussion in~\S\ref{subsec:codeg}
that $\tau$ is then a small negative power of~$d$. Thus, for large $d$,
$\tau$ will be vanishingly small compared to the constants $r$ and~$\zeta$.

The preceding comments should help to illuminate the main thrust of the
theorem, but some further comments should be made about the detailed statement.

First of all, we shall not actually generate $C$ from just one small set
but instead from an $r$-tuple $T=(T_{r-1},\ldots,T_0)\in \mathcal{P}(I)^r$ of
small sets. The principles of the remarks made above remain the same. The
precise bound on the size of $T$ is $\mu(T_i)\le 2\tau/\zeta$, which (in
the light of what has been said) is of order~$\tau$. 

Secondly, we use a piece of shorthand.
  Let $T=(T_{r-1},\ldots,T_1,T_0)\in\mathcal{P}([n])^r$ and let $w\in[n]$.
  Then we define
  $
  T\cap[w]\,=\, (T_{r-1}\cap [w],\ldots,T_1\cap [w],T_0\cap [w]).
  $
The relevance of this will be discussed later.

Finally, we say that an $r$-graph $H$ is {\em $b$-degenerate} if 
$e(H[S])\le b|S|$ for every
subset $S\subset V(H)$.

\begin{thm}\label{thm:cover}
  Let $G$ be an $r$-graph with vertex
  set~$[n]$. Let
  $\tau,\zeta>0$ satisfy $\delta(G,\tau)\le\zeta$. Then there
  is a function $C:\mathcal{P}([n])^r\to\mathcal{P}([n])$, such that, for every
  independent set $I\subset[n]$ there exists
  $T=(T_{r-1},\ldots,T_0)\in\mathcal{P}(I)^r$ with
 \begin{itemize}
 \item[(a)] $I \subset C(T)$,
 \item[(b)] $\mu(T_0), \ldots, \mu(T_{r-1}) \le 2\tau/\zeta$, 
 \item[(c)] $|T_0|,\ldots,|T_{r-1}|\le 2\tau n/\zeta^2$,  and
 \item[(d)] $\mu(C(T)) \le 1 - 1/r! + 4\zeta+2r\tau/\zeta$.
 \end{itemize}
 Moreover if $G$ is simple then
 $C(T)\cap[w]=C(T\cap[w])\cap[w]$ for all $T\in\mathcal{P}([n])^r$ and
 $w\in[n]$.

 In fact, the above is true for all sets $I\subset[n]$
 for which either $G[I]$ is $\lfloor\tau^{r-1} \zeta e(G)/n\rfloor$-degenerate
 or $e(G[I])\le 2r \tau^re(G)/ \zeta$.
\end{thm}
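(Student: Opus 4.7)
The plan is to define $C(T)$ by an online procedure that sweeps the vertices in their given order $v=1,2,\ldots,n$. At each step, decisions at $v$ are allowed to depend only on $T\cap[v-1]$ and on the subhypergraph $G[v-1]$; this will immediately deliver the online property $C(T)\cap[w]=C(T\cap[w])\cap[w]$. Concretely, at vertex $v$ and each level $j\in\{0,1,\ldots,r-1\}$, I would count the number of ways in which $v$, together with some $j$ elements of $T_j\cap[v-1]$ and some $r-j-1$ previously accepted vertices, completes an edge of $G$; call $v$ \emph{dangerous} at level $j$ if this count exceeds a threshold of order $\tau^{r-1-j}d(v)$. A vertex $v$ is then placed in $C(T)$ precisely when either $v\in T_j$ for some $j$ or $v$ is undangerous at every level. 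This specifies $C$ as a function of $T$ alone.

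For an independent $I$, I would construct $T\subset I$ by running the same sweep but letting $I$ guide it: whenever $v\in I$ becomes dangerous at some level, place $v$ in the corresponding $T_j$ (choosing some canonical such $j$); otherwise leave $v$ alone. Since $I$ is independent, the sweep never completes an edge whose vertices all lie in $C(T)\cap I$, and so the container produced from this particular $T$ contains $I$, which gives~(a). The weakened hypothesis at the end of the theorem is handled by the same argument: one checks that only the edge-structure of $G$ on $I$ is invoked, and either $\lfloor\tau^{r-1}\zeta e(G)/n\rfloor$-degeneracy or the edge-count bound $e(G[I])\le 2r\tau^r e(G)/\zeta$ is enough to rule out a surviving edge.

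The size bounds (b) and~(c) should come from a double-count over vertices placed into $T_j$: each such placement is witnessed by a co-degree in excess of the level-$j$ threshold, and summing yields $|T_j|$ times the threshold bounded by $\sum_v d^{(j+1)}(v)$. Rearranging and applying $\delta(G,\tau)\le\zeta$ produces $\mu(T_j)\le 2\tau/\zeta$, and the monotonicity $d(v)\searrow$ upgrades this to the cardinality bound $|T_j|\le 2\tau n/\zeta^2$ (since the heaviest vertices come first). For~(d), one tracks the degree measure of the undangerous set level by level: at each of the $r$ levels a Markov-style bound on co-degree shows that the surviving mass shrinks by a multiplicative factor, while picking up an additive error controlled by $\delta_j$ and $\tau$. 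Under the right indexing the multiplicative factors telescope to leave a surviving mass of $1-1/r!$, and the additive errors across the $r$ levels consolidate into $4\zeta+2r\tau/\zeta$; this last consolidation is precisely what the powers of $2$ in Definition~\ref{defn:delta} were arranged to allow (cf.~Lemma~\ref{lem:maxdeg}).

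The main obstacle will be executing this telescoping analysis cleanly: one must simultaneously track the degree measure of the surviving set, the residual structure of edges that remain inside it after each level, and how the $r$ threshold-errors combine rather than compound, and then verify that the final slack is the stated $4\zeta+2r\tau/\zeta$. A secondary but important discipline is to keep everything genuinely online — the danger counts at $v$ must use only $T\cap[v-1]$ together with previously accepted vertices, never the eventual $T$ — so that the function $C$ does not implicitly depend on information from the future of the sweep.
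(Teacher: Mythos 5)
Your high-level picture is aligned with the paper's: an online sweep, level-indexed thresholds driven by co-degrees, and a telescoping identity that yields surviving mass close to $1-1/r!$. But the proposal as written has genuine gaps that would block the argument. Most importantly, the paper caps degrees in the intermediate multigraphs $P_s$ not only of individual vertices but of every subset $\sigma$ with $|\sigma|\ge 2$; this is exactly what the sets $\Gamma_s$ do, and it is what makes Lemma~\ref{lem:maxdeg} work. The point (spelled out before the algorithm) is that a single addition of the multiset $F$ can blow a vertex degree well past the threshold, and the only way to control that overshoot across $r$ levels is to also cap $d_s(\sigma)$ for larger $\sigma$; this is encoded via the recursion $a_s^{(j)}=2^sa_{s+1}^{(j)}+a_{s+1}^{(j+1)}$ of Lemma~\ref{lem:asj} and is why the powers of $2$ appear in Definition~\ref{defn:delta}. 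Your proposal thresholds only vertices, so the measure estimates for $T_s$ and for $C$ are not actually under control.

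Two further gaps. For the cardinality bound~(c) and the $b$-degenerate extension, the paper truncates the vertex set to $[m]=\{v:d(v)\ge\zeta d\}$, runs the algorithm on $[m]$, and attaches $[m+1,n]$ to the container (Lemma~\ref{lem:trunc} and the final proof). "Monotonicity upgrades (b) to (c)" is not enough on its own: one deduces $|T_s|\zeta d\le \sum_{v\in T_s}d(v)$ only because $T_s\subset[m]$, which is exactly this truncation step, and the $b$-degenerate case of Lemma~\ref{lem:muT0} needs $d(v)\ge\tau^{1-r}rb$ on $[m]$. Finally, the estimate $\mu(C)\le 1-1/r!+\cdots$ (Lemma~\ref{lem:mucont}) is not a one-line telescope. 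It requires the partition $D_1,\ldots,D_r$ together with the auxiliary edge sets $X_s,Y_s,Z_s$ of $E(P_s)$, the inequality $\mu(D_{\le s})\le s\mu(D_{\le s-1})-2x_s+sy_s+(s-1)4^{1-s}\delta$, and a chain $z_s\le x_{s-1}+y_{s-1}$ whose proof uses precisely the subset-codegree caps $\Gamma_{s-1}$ from the first point; without these, the claimed error term $4\zeta+2r\tau/\zeta$ cannot be recovered, and it is not even clear the surviving mass is bounded below by any constant.
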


\begin{rem}\label{rem:cover} The discussion preceding the theorem has
  hopefully helped to explain it, but a few more observations are worth making.
\begin{itemize}
\item Roughly speaking, the theorem says that for each $I$ there exists
  $T\subset I$ with $\mu(T)\lesssim\tau$, $I \subset C(T)$ and $\mu(C)\lesssim
  1-1/r!$, provided $\tau$ is large enough to make $\delta(G,\tau)$ small.
\item Assertions (b) and (c) each offer different, though obviously related,
  ways to bound the size of~$T$; for each bound, there are applications
  where it is the more convenient.
\item We refer to the property $C(T)\cap[w]=C(T\cap[w])\cap[w]$, which
  holds for simple graphs (but see~\S\ref{sec:postscript}), as the {\em
    online property}, because the construction is behaving somewhat like
  an online algorithm: the vertices of the container lying
  within the first $w$ vertices are already determined by
  $T\cap[w]$. (Nevertheless, knowledge of the whole of $G$ is needed to
  determine $C(T\cap[w])$.) The online property is important only for
  certain applications, principally Theorem~\ref{thm:uniform}. For now, the
  property can safely be ignored.
\item The container construction method makes essentially no use of the
  independence of the sets~$I$, so we include an extension to two kinds of
  sparse subset, where either $G[I]$ is $b$-degenerate for some small~$b$,
  or else $e(G[I])\le bn$ for some other~$b$. Both types of sparsity are
  useful. Allowing $e(G[I]) \le bn$ is used in
  Theorem~\ref{thm:ffree_cover_heavy}, and hence Theorem~\ref{thm:klr3};
  $b$-degeneracy is used in Theorem~\ref{thm:list_colour_planar}.
\end{itemize}
\end{rem}

As mentioned in the introduction, Theorem~\ref{thm:cover} has a variety of
consequences and weaker forms which are easier to apply directly.  We state
a couple of them now: they are developed further
in~\S\ref{sec:iteration}--\S\ref{sec:uniform}.

\subsection{Tight containers}\label{subsec:tight}

The first corollary is packaged for use when we want $e(G[C])$ to be small
for each container~$C$. This is the corollary we use to prove, say,
Theorem~\ref{thm:ffree_cover} in~\S\ref{sec:hfree}. It makes no mention of
degree measure. 

The way to make $e(G[C])$ small is to apply the container theorem
repeatedly, as follows.  Suppose $I$ is an independent set in~$G$. Observe
that Theorem~\ref{thm:cover} gives a container $C$ with $I\subset C$ and
$\mu(C)\le 1-c$, where $c$ is some positive constant (perhaps around
$1/r!$). By inequality~(\ref{eqn:mutoe}) this means $e(G[C])\le
(1-c)e(G)$. But $I$ is an independent subset in $G[C]$, so we can apply
Theorem~\ref{thm:cover} again, this time to the graph $G[C]$, to obtain a
container $C'$ with $I\subset C'$ and $e(G[C'])\le (1-c)e(G[C])\le (1-c)^2
e(G[C])$. Repeated applications allow us to obtain containers with as few
edges as we wish, the only constraint being that the main theorem ceases to
be effective when the number of edges remaining is very small. Of course,
repeated applications increase the total number of containers, but this
turns out to be inexpensive.

The following corollary is the simplest of the ones obtained in this way, in
which, as usual, the size of the collection of containers is bounded by a
simple function of~$\tau$.

\begin{cor}\label{cor:sparse_container}
  Let $G$ be an $r$-graph on vertex set $[n]$. Let
  $0<\epsilon, \tau<1/2$. Suppose that $\tau$ satisfies $\delta(G,\tau) \le
  \epsilon/12r!$. Then there exists a constant
  $c=c(r)$, and a function $C:\mathcal{P}([n])^s\to \mathcal{P}[n]$ where
  $s\le c\log(1/\epsilon)$, with the following properties.
  Let $\mathcal{T}=\{(T_1,\ldots,T_s)\in \mathcal{P}([n])^s: |T_i|\le
  c\tau n, 1\le i\le s\}$, and let $\C =\{C(T):T\in\mathcal{T}\}$. Then
\begin{itemize}
 \item[(a)] for every independent set $I$ there exists
  $T=(T_1,\ldots,T_s)\in\mathcal{T}\cap\mathcal{P}(I)^s$ with $I \subset C(T)
  \in \C$,
 \item[(b)] $e(G[C]) \le \epsilon e(G)$ for all $C \in \C$,
 \item[(c)] $\log|\C| \le c \log(1/\epsilon) n \tau \log(1/\tau)$.
\end{itemize}

Moreover, (a) holds for all sets $I\subset[n]$ for which either $G[I]$ is 
$\lfloor\epsilon\tau^{r-1} e(G)/12r!n\rfloor$-degenerate or
$e(G[I])\le 24 \epsilon r!r\tau^r e(G)$.
\end{cor}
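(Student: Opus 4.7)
The strategy is to iterate Theorem~\ref{thm:cover} roughly $\log(1/\epsilon)$ times, each iteration producing a smaller container whose induced subgraph loses a constant fraction of its edges. Fix $\zeta=1/12r!$, set $C_0=[n]$, and for $k\ge 1$, so long as $e(G[C_{k-1}])>\epsilon e(G)$, apply Theorem~\ref{thm:cover} to the induced subgraph $G_{k-1}:=G[C_{k-1}]$ (with vertices reordered by degree in $G_{k-1}$). The theorem delivers an $r$-tuple $T^{(k)}\in\mathcal{P}^r I$ with $|T^{(k)}_i|\le 2\tau|C_{k-1}|/\zeta^2\le c\tau n$ and a container $C_k=C(T^{(k)})\subseteq C_{k-1}$ with $\mu_{G_{k-1}}(C_k)\le 1-1/r!+4\zeta+2r\tau/\zeta$. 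The choice of $\zeta$ and the assumption $\tau\le 1/(144r!^2 r)$ force $4\zeta+2r\tau/\zeta\le 1/3r!+1/6r!=1/2r!$, so $\mu_{G_{k-1}}(C_k)\le 1-1/2r!$; double counting the edges of $G_{k-1}$ against the vertex degrees then yields
\[
e(G[C_k])\,\le\,(1-1/2r!)\,e(G[C_{k-1}]).
\]
Stopping at the first $s$ with $e(G[C_s])\le\epsilon e(G)$ takes $s\le 2r!\log(1/\epsilon)+O(1)$ steps, which establishes~(b); the concatenation $(T^{(1)},\ldots,T^{(s)})$, relabelled as $(T_1,\ldots,T_{rs})$, is a sequence of at most $c\log(1/\epsilon)$ subsets of $I$ each of size $\le c\tau n$, establishing~(a).

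The main obstacle is to verify that the hypothesis $\delta(G_{k-1},\tau)\le\zeta$ of Theorem~\ref{thm:cover} survives the iteration. From Definition~\ref{defn:delta} we have $\delta_j(G)\,\tau^{j-1}\,r\,e(G)=\sum_{v\in V(G)}d_G^{(j)}(v)$, and the obvious inequality $d_{G[C]}^{(j)}(v)\le d_G^{(j)}(v)$ for $v\in C$ combine, after taking the weighted sum from Definition~\ref{defn:delta}, to give
\[
\delta(G[C],\tau)\,\le\,\delta(G,\tau)\cdot\frac{e(G)}{e(G[C])}.
\]
Before stopping, $e(G[C_{k-1}])>\epsilon e(G)$, so $\delta(G_{k-1},\tau)<\delta(G,\tau)/\epsilon\le(\epsilon/12r!)/\epsilon=\zeta$ as required. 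This is precisely why the corollary's hypothesis on $\delta(G,\tau)$ is scaled by a factor of $\epsilon$.

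Part~(c) follows from the standard estimate $\binom{n}{\le c\tau n}\le 2^{c'\tau n\log(1/\tau)}$ applied to each of the at most $c\log(1/\epsilon)$ sets $T_i$. For the moreover clause, the inequalities $e(G_{k-1})\ge\epsilon e(G)$ (valid before stopping) and $|C_{k-1}|\le n$ give
\[
\lfloor\tau^{r-1}\zeta e(G_{k-1})/|C_{k-1}|\rfloor\,\ge\,\lfloor\epsilon\tau^{r-1}e(G)/(12r!n)\rfloor\qquad\text{and}\qquad 2r\tau^r e(G_{k-1})/\zeta\,\ge\,24\epsilon r!r\tau^r e(G),
\]
so any set $I$ meeting the corollary's degeneracy or edge-count hypothesis also meets the moreover hypothesis of Theorem~\ref{thm:cover} applied to $G_{k-1}$ at every iteration (since $G_{k-1}[I]\subseteq G[I]$), and the argument goes through unchanged.
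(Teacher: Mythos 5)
Your proof is correct and takes essentially the same approach as the paper. The paper proves this corollary via its Theorems~\ref{thm:coveroff} and~\ref{thm:iteration}, which package exactly the bare-hands iteration of Theorem~\ref{thm:cover} that you carry out, and the crux in both cases is the same one-line observation you make explicit: for any positive linear combination such as $\delta$, restricting to an induced subgraph $G[C]$ with $e(G[C])\ge\epsilon e(G)$ worsens the co-degree function by at most a factor of $1/\epsilon$, since $\delta_j(G[C])\tau^{j-1}re(G[C])=\sum_{v\in C}d_{G[C]}^{(j)}(v)\le\sum_{v\in[n]}d_G^{(j)}(v)=\delta_j(G)\tau^{j-1}re(G)$.
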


The main points here are again that~(a) shows there is a container for each
independent set, (b)~shows that each container has few internal edges,
and~(c) bounds the size of~$|\C|$. Condition~(a) contains the extra
information that the container $C(T)$ for~$I$ is constructed from $T$, a
few small subsets of~$I$, which can be useful sometimes, as mentioned
in the discussion after Theorem~\ref{thm:ffree_cover}.

The corollary holds provided $\delta(G,\tau)$ is bounded above as
specified. As discussed
in~\S\ref{subsec:codeg}, this implies a lower bound on~$\tau$. In applications
where it matters, the value $c(r)=800r!^3r$ can be taken.

Corollary~\ref{cor:sparse_container} is proved in~\S\ref{sec:iteration},
together with a finer result of this kind.

\subsection{Uniformly bounded containers}\label{subsec:uniformly}

Next we give a consequence of Theorem~\ref{thm:cover} packaged for
applications when the size $|C|$ of the container is of interest. We shall
use it to prove the list colouring result, Theorem~\ref{subsec:list}
in~\S\ref{thm:chil}. The package is somewhat more subtle than
Corollary~\ref{cor:sparse_container}. We would like a bound on
$|C|$ of the form $(1-c)n$ for some constant~$c$, but, as noted before
in~\S\ref{subsec:mu}, this does not always hold, say when
$G=K_{d,n-d}$. What can be said in such circumstances that is useful?

Given $S\subset V(G)$, write $\overline S$ for $V(G)-S$ and $e(\overline
S,S)$ for the number of edges meeting both $\overline S$ and~$S$. The sum
$\sum_{v\in S}d(v)$ counts edges inside $G[S]$ $r$ times each, together with
edges meeting both $\overline S$ and~$S$ at most $r-1$ times each. Hence
$$
\sum_{v\in S}d(v)-re(G[S])\,\le\,(r-1) e(\overline S, S)\,\le\,
(r-1)\sum_{v\notin S}d(v)=(r-1)\mu(\overline S)nd
$$
or, in other words, $\mu(S)-re(G[S])/nd\le (r-1)\mu(\overline S)$. Now
$\mu(\overline S)=1-\mu(S)$ and so we have
\begin{equation}\label{eqn:mubig}
e(G[S])\,\ge\,(\mu(S)-1+{1\over r})\,nd\,.
\end{equation}
In particular, as mentioned in~\S\ref{subsec:mu}, if $I$ is independent
then $\mu(I)\le 1-1/r$. Moreover if $e(G[C])\le \eps e(G)=\eps nd/r$, as in
Corollary~\ref{cor:sparse_container}, then $\mu(C)\le 1-1/r+\eps/r$. Now
if $G$ is regular, then degree and uniform measures coincide; therefore in
this case Corollary~\ref{cor:sparse_container} supplies containers with
$|C|\le (1-1/r+\eps/r)n$.

As we know, we cannot always bound $|C|$ usefully for non-regular
graphs. However, it turns out we can use a bound on $\mu(C)$ to bound the
ratio $|C\cap[v]|/v$ away from one for some values of~$v$ and, when
suitably formulated, such a bound is enough for our application to list
colouring. In order to establish this bound, we shall need the online
property, and so we prove the bound only for simple graphs (for which the
online property holds --- however, see~\S\ref{sec:postscript}).  An
important point is that we cannot make use of
iteration as in~\S\ref{subsec:tight} to obtain smaller containers, because
the online property does not survive iteration. (Indeed, consider the
discussion in~\S\ref{subsec:tight}, and suppose $C$ is determined by $T$
and then $C'$ by~$T'$. For $C'$ to have the online property we would need
to be able to determine $C^\prime \cap [v]$ from $T\cap[v]$ together with
$T'\cap[v]$. But, to compute $C^\prime \cap [v]$, it is necessary to know
the whole of $G[C]$, for which it is necessary to know the whole of~$C$,
and thus to know the whole of~$T$ rather than just $T\cap[v]$; hence the
online property fails.) Since we cannot use iteration, we have an interest
in making the bound on $\mu(C)$ in Theorem~\ref{thm:cover} as small as we
can (see~\S\ref{subsec:optimality}).

Let $\C$ be a collection of containers for~$G$. For each initial
segment~$[v]$ of the vertex set~$[n]$, the set $\C_v=\{C\cap[v]: C\in\C\}$
of restrictions to $[v]$ is a collection of containers for the induced
subgraph~$G[v]$. For our application, it would be enough to find a segment
$[v]$ for which the collection $\C_v$ of restrictions is
well-behaved: that is, $|C\cap [v]|\le(1-c)v$ for each $C$ and $|C_v|\le
2^{\tau v}$. We could then work with the subgraph $G[v]$ rather than
with~$G$. In the example of $G=K_{d,n-d}$, the first $2d$ vertices induce
$K_{d,d}$, so we could take $v=2d$ and find good containers for $G[v]$.
But $G$ does not always have such a nice subgraph
(see~\S\ref{sec:online}).

Something slightly different does work, though. Each container $C=C(T)$
nominates a vertex $v=g(C)$ so that both restrictions $C\cap[v]$ and
$T\cap[v]$ are simultaneously constrained (Lemma~\ref{lem:g}). By the
online property, $T\cap[v]$ determines $C\cap[v]$. This limits the number
of possible sets $C\cap[v]$, and it turns out to be enough for our
application.

To state the precise theorem, we make a couple of technical changes to the
outline just given. First, we work with tuples $(C_1,\ldots,C_t)$ of
containers rather than individual containers, since this is ultimately more
efficient. Secondly, we include a lower bound $k$ on $g(C)$ to make sure
$v$ is not too small. In order to convert degree to uniform measure, we
also ask that the vertices be ordered by decreasing degree.

\begin{thm}\label{thm:uniform}
  Let $G$ be a simple $r$-graph on vertex set $[n]$, for
  which the degree sequence is decreasing. Let $0<\zeta\le 1/12r!$.
  Suppose that $\delta(G,\tau)\le \zeta$, that $\tau\le\zeta^2/r$, and
  that $k\in[n]$ satisfies $\mu([k])\le \zeta/2r!$. Let $t\in\mathbb{N}$.

  Then there exists a collection $\C\subset\mathcal{P}[n]$ and a map
  $g:\C^t\to[k,n]$, with the following properties:
 \begin{itemize}
 \item[(a)] for all independent sets $I$ there is some $C\in\C$ with
   $I\subset C$,
 \item[(b)] for all $v\in[n]$
$$
\log\left|\{\,(C_1\cap[v],\ldots,C_t\cap[v])\,:\,g(C_1,\ldots,C_t)=v\,
\}\right|\,\le\, r\zeta^{-2} vt\tau\log(1/\tau),
$$
 \item[(c)] and for all $(C_1,\ldots,C_t)\in{\C}^t$
$$
\frac{1}{t}\sum_{i=1}^t|C_i\cap[v]|\,\le\,(1-\frac{1}{r!}+8\zeta)v\,,
$$
where $v=g(C_1,\ldots,C_t)$.
 \end{itemize}

 Moreover, (a) holds for all sets $I\subset[n]$ for which either $G[I]$ is
 $\lfloor\tau^{r-1}\zeta e(G)/n\rfloor$-degenerate
 or $e(G[I])\le 2 r\tau^re(G)/ \zeta$.
\end{thm}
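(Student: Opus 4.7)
I would apply Theorem~\ref{thm:cover} as a black box to $G$, obtaining the online map $C:\mathcal{P}^r[n]\to\mathcal{P}[n]$ satisfying conclusions (a)--(d) there. Set $\mathcal{C}:=\{C(T):T\in\mathcal{P}^r[n],\ |T_i|\le 2\tau n/\zeta^2\text{ for all }i\}$; property (a) of the present theorem, including the final clause about $\lfloor\tau^{r-1}\zeta e(G)/n\rfloor$-degenerate or sparse $G[I]$, then follows at once from the corresponding clauses of Theorem~\ref{thm:cover}. For each $C\in\mathcal{C}$ fix, once and for all, a witness tuple $T(C)=(T_{r-1},\ldots,T_0)$ produced by Theorem~\ref{thm:cover}.

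Given $(C_1,\ldots,C_t)\in\mathcal{C}^t$ write $T^j=T(C_j)$ and define
\[
g(C_1,\ldots,C_t)\,=\,\min\bigl\{v\in[k,n]:(\ast)_v\text{ and }(\dagger)_v\text{ both hold}\bigr\},
\]
where $(\ast)_v$ asserts $\sum_{j,i}|T^j_i\cap[v]|\le\zeta^{-2}tr\tau v$ and $(\dagger)_v$ asserts $\frac{1}{t}\sum_j|C_j\cap[v]|\le(1-1/r!+8\zeta)v$. Property (c) is then just $(\dagger)_{g(\cdot)}$. For property (b) I would invoke the online property: because $C_j\cap[v]=C(T^j\cap[v])\cap[v]$, a tuple of restrictions $(C_j\cap[v])_{j=1}^t$ is determined by $(T^j\cap[v])_{j=1}^t$; under $(\ast)_v$ the latter ranges over families of $tr$ subsets of $[v]$ of total size at most $\zeta^{-2}tr\tau v$, so the standard bound $\binom{v}{s}\le(ev/s)^s$ combined with concavity of $s\mapsto s\log(ev/s)$ yields at most $2^{\zeta^{-2}vtr\tau\log(1/\tau)}$ such tuples, the factor $\log(1/\tau)$ absorbing the $\log(e/\zeta^{-2}tr\tau)$ term.

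The main obstacle is showing that $g$ is well-defined, i.e.\ that for every $(C_1,\ldots,C_t)$ some $v\in[k,n]$ satisfies both $(\ast)_v$ and $(\dagger)_v$. The bound $(\ast)_v$ is relatively easy: since $\sum_{j,i}|T^j_i|\le 2tr\tau n/\zeta^2$, the set where $(\ast)_v$ fails is contained in an initial segment of $[n]$ of length $O(\tau n/\zeta^2)=O(n/r)$ (using $\tau\le\zeta^2/r$), an $O(1/r)$ fraction of $[k,n]$. The bound $(\dagger)_v$ is where the real work sits. I would start from Theorem~\ref{thm:cover}(d), giving $\mu(C_j)\le1-1/r!+4\zeta+2r\tau/\zeta\le1-1/r!+6\zeta$, and combine this with the hypothesis $\mu([k])\le\zeta/2r!$, which guarantees that the initial block $[k]$ carries negligible degree-measure. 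The crucial step is to use the online identity $C_j\cap[v]=C(T^j\cap[v])\cap[v]$ to port the measure bound from $C_j$ to its initial segment $C_j\cap[v]$, yielding $\mu(C_j\cap[v])/\mu([v])\le 1-1/r!+O(\zeta)$; the decreasing-degree ordering then converts this degree-measure ratio into a uniform-measure bound $|C_j\cap[v]|/v\le 1-1/r!+O(\zeta)$ via $d(u)\ge d(v)$ for $u\le v$. A $t$-fold averaging (the average $\frac{1}{t}\sum_j$ in $(\dagger)_v$ is forgiving and the single-$j$ bound itself is slack by an additive $2\zeta$) then shows that $(\dagger)_v$ holds for all but an $O(\zeta)$-fraction of $v\in[k,n]$. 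Since the two failure sets do not jointly cover $[k,n]$ (by choice of the constants $\zeta\le 1/12r!$ and $\tau\le\zeta^2/r$), a common good $v$ exists and $g$ is well-defined.
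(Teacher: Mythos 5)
The overall architecture — apply Theorem~\ref{thm:cover}, pool the $rt$ witness sets, find a single good $v\in[k,n]$ where both a $T$-bound and a $C$-bound hold, then use the online property together with Lemma~\ref{lem:entropy} for part~(b) — is exactly the paper's strategy. But the central step, converting the degree-measure bound $\mu(C)\le 1-1/r!+6\zeta$ into a cardinality bound on $|C\cap[v]|$, is handled incorrectly, and the justification you give for the $T$-bound failure set being small is also wrong. Both should go through Lemma~\ref{lem:lemmoid} (and its packaging in Lemma~\ref{lem:g}), which you never invoke.

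Concretely: you claim the set of $v$ where $(\ast)_v$ fails ``is contained in an initial segment of $[n]$ of length $O(\tau n/\zeta^2)$.'' This is false; $\sum_{j,i}|T^j_i\cap[v]|>\zeta^{-2}tr\tau v$ can hold for arbitrarily large $v$, and the crude bound $\sum|T^j_i|\le 2tr\tau n/\zeta^2$ only forces $(\ast)_v$ once $v\ge 2n$, which is outside $[n]$. The correct statement, obtained by applying Lemma~\ref{lem:lemmoid} to the multiset union of the $T^j_i$ (using $\mu(T^j_i)\le 2\tau/\zeta$, not $|T^j_i|\le 2\tau n/\zeta^2$), is that the failure set has \emph{degree measure} at most $2\zeta$ — it need not be an initial segment at all. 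Similarly, your claim that $\mu(C_j\cap[v])/\mu([v])\le 1-1/r!+O(\zeta)$ follows from the online identity is unfounded: the online identity expresses $C_j\cap[v]$ as $C(T^j\cap[v])\cap[v]$, but Theorem~\ref{thm:cover} bounds $\mu(C(T^j\cap[v]))$ against $\mu([n])=1$, not against $\mu([v])$, and gives no leverage on the restriction to $[v]$. Even if you somehow had $\mu(C_j\cap[v])\le\beta\mu([v])$, the decreasing-degree ordering does \emph{not} yield $|C_j\cap[v]|\le\beta v$ pointwise — take $d(1)$ huge and $d(2)=\cdots=d(v)=1$, with $C_j\cap[v]=[2,v]$, to see the ratio of cardinalities stay near $1$ while the ratio of measures is tiny. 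The genuine content of Lemma~\ref{lem:lemmoid} is that the set of $v$ where $|C_j\cap[v]|\ge\alpha v$ has bounded $\mu$-measure (not that the inequality holds for every, or even most, $v$). Finally, the good set for $(\dagger)_v$ is not ``all but an $O(\zeta)$-fraction''; with $c=1/r!-8\zeta$ and $\eta=2\zeta$, Lemma~\ref{lem:g} only guarantees the good set has $\mu$-measure at least $\eta c^2/(1-c)=\Theta(\zeta/r!^2)$, which is small but — and this is the point of the numerics $\zeta\le 1/12r!$ and $\mu([k])\le\zeta/2r!$ — still enough to escape the union of the other two small sets. Your $g$ is well-defined, but for reasons your sketch does not establish; the missing lemma is Lemma~\ref{lem:lemmoid}.
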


The main features of this theorem are hopefully recognisable by now: (a)
means each independent set is in a container, (b) means each tuple of
containers nominates a vertex~$v$, and the number of restricted containers
for any nominated~$v$ is small in terms of~$v$, and (c) means the uniform
measure of the restricted containers is bounded away from one. 

It is worth noting that, for regular graphs, Theorem~\ref{thm:uniform}
follows (up to the odd constant) from Corollary~\ref{cor:sparse_container},
because in that case, as remarked following inequality~(\ref{eqn:mubig}),
$|C|\le (1-1/r + \eps/r)$ for every $C\in\C$, and we can just take
$g(C_1,\ldots,C_t)=n$.

Theorem~\ref{thm:uniform} is proved in~\S\ref{sec:uniform} and applied in
the proof of Theorem~\ref{thm:chil}.

\subsection{Optimality}\label{subsec:optimality}

We conclude this section about the main theorem with some observations
as to what extent it is best possible. There are two aspects to optimality:
the bound on $\mu(C)$ and the bound on $|\C|$, the latter being implied by
the size of~$\tau$.

It is easy to produce examples of $G$ and $I$ with $\mu(I)= 1-1/r$, and so
the best bound on $\mu(C)$ that one could hope for in general is $\mu(C)\le
1-1/r$. The bound in Theorem~\ref{thm:cover} is, essentially, $\mu(C)\le
1-1/r!$. In fact the algorithm that we use to prove Theorem~\ref{thm:cover}
does not give containers smaller than this: there is a description
in~\cite{STx} and in~\cite{S} of an example of a graph $G$, an ordering of
its vertices, and an independent set~$I$ such that the container $C$
constructed for $I$ satisfies $\mu(C)\approx 1-1/r!$. In this sense,
Theorem~\ref{thm:cover} is best possible. The fact that the
algorithm does achieve this bound is proved in~\S\ref{subsec:container} and,
for reasons mentioned in~\S\ref{subsec:uniformly}, we put some effort into
that proof, even though a shorter argument would give a useful but weaker bound.

We remark that the simple algorithm in~\cite{STe} gives $\mu(C)\le
1-1/4r^2$ in one shot, but the number of containers produced is larger
than here. (That method too applies only to simple hypergraphs, though
this is not quite such a drawback as might at first appear.)

The more important aspect of optimality regarding Theorem~\ref{thm:cover}
is the bound on $|\C|$ implicit in the bound
on~$\tau$. Theorem~\ref{thm:optimality} below states in what ways the bound is
optimal, but before stating the theorem we explain informally what lies
behind it.

A simple counting argument shows that if most sets of size $t$ are
independent, or nearly independent (having many fewer edges than vertices),
then any collection $\C$ of containers must satisfy $|\C|=e^{\Omega(t)}$,
provided each container $C\in\C$ has size bounded away from~$n$. In a graph
of order $n$ and average degree~$d$, most sets of size $o(nd^{-1})$ are
nearly independent; for $r$-graphs the same is true of most sets of size
$o(nd^{-1/(r-1)})$. This is already enough to show that the main theorem
comes quite close to being best possible, given that (as remarked
in~\S\ref{subsec:codeg}) $\tau$ is often of order $d^{-1/(r-1)}$.

We can say even more for certain kinds of containers. Let us call $\C$ {\em
  internally generated} if there is some function $C(T)$ such that
$\C=\{C(T): T\in\mathcal{T}\}$, where $\mathcal{T}\subset
\mathcal{P}(V(G))$, and for every independent set~$I$, there exists
$T\in\mathcal{T}$ with $T\subset I\subset C(T)$.  In this case, we show
that $\mathcal T$ must contain sets $T$ of size at least $\Omega(t)$ and,
moreover, $|\mathcal{T}|=e^{\Omega(t n\log (1/t))}$. Again, if $\tau$ is of
order $d^{-1/(r-1)}$, this indicates that the bounds in
Corollary~\ref{cor:sparse_container} are best possible to within a constant
factor, and, certainly if $C(T)$ is injective, the logarithmic factor
in~(c) cannot be removed, meaning~(a) must be retained. (We do not assert,
though, that the function $C(T)$ given by the algorithm in this paper is
injective.)

Finally, the constraint imposed on $\tau$ by the co-degree function
$\delta(G,\tau)$ is, in a sense, also optimal. For this we assume a certain
amount of symmetry in the $r$-graph~$G$, say $G$ is vertex and
edge transitive: this condition certainly holds in several cases of
interest, such as in~\S\ref{subsec:hfree} and~\S\ref{subsec:sparse} when
$G=G(N,H)$.  Given an $r$-graph $G$, let $G_j$ be the $j$-graph whose edges
are the $j$-sets $\sigma$ of maximum degree in~$G$. By symmetry, every edge
of $G$ contains an edge of $G_j$ and so, crucially, every independent set
of $G_j$ is independent in $G$ too. Thus a collection $\C$ of containers
for $G$ furnishes a collection for~$G_j$; hence, by considering nearly
independent sets in~$G_j$, we can obtain lower bounds on $|\C|$ in terms of
the average degree of~$G_j$. The maximum over all $j$ of these bounds turns
out to be exactly (up to constant factors) the size of $\C$ given by
Corollary~\ref{cor:sparse_container}, determined by the constraint
on~$\delta(G,\tau)$. Hence the co-degree function is in some way capturing
the right property of~$G$. Readers familiar with the parameter
$m(H)$ defined in~\S\ref{subsec:hfree} will perhaps recognise the spirit of
this argument and so sense why $m(H)$ appears in
Theorem~\ref{thm:ffree_cover}.

\begin{thm}\label{thm:optimality}
Let $G$ be an $r$-graph of average degree~$d$ and vertex set~$[n]$. Let
$\C\subset {\mathcal P}[n]$ be such that, for every independent set $I$ of
$G$ there is some $C\in\C$ with $I\subset C$. Suppose that $|C|\le (1-c)n$
for all $C\in\C$. Then there is a positive constant
$\gamma=\gamma(r,c)$ such that the following hold.
\begin{itemize}
\item[(i)] $|\C|\ge e^{\gamma nt}$, where $t=d^{-1/(r-1)}$;
\item[(ii)] if $\C$ is internally generated (see above) and $\C=\{C(T):
  T\in\mathcal{T}\}$, then $|T|\ge \gamma tn$ for some $T\in\mathcal{T}$,
  and $|\mathcal{T}|\ge e^{\gamma nt\log (1/t)}$, where $t=d^{-1/(r-1)}$;
\item[(iii)] if $G$ is vertex and edge transitive, and $\delta(G,\tau)\ge1$,
  then (i) and (ii) hold with $t=\tau$.
\end{itemize}
\end{thm}

The proof of this theorem is given in~\S\ref{sec:optimality}.

\section{The Algorithm}\label{sec:online}

In this section we describe the method of building containers and establish
the basic facts about them.

Even for 2-graphs, it is not immediate that a useful container theorem
exists. The starting point for our method is the work of
Sapozhenko~\cite{Sap1,Sap2,Sap5,Sap4}, who gave a way to build containers for
regular 2-graphs. In~\S\ref{subsec:egr2} we describe our method for
2-graphs, which illustrates some of the essential features of the general
method, though obviously not all.

This is only the starting point, of course, because a method for 2-graphs
gives very little clue as to how to approach $r$-graphs. A method we found
that works for simple regular $r$-graphs was described in~\cite{ST}
(refined in~\cite{STe}), good enough to produce a good lower bound for
$\chi_l(G)$. This would yield a similar bound for non-regular $r$-graphs if it
were true that every such graph contains an almost regular subgraph. The
requirement on the subgraph here can be made extremely weak but
nevertheless it cannot be satisfied; there are examples of $r$-graphs where
every subgraph is far from regular, somewhat along the lines of the
$2$-graphs of Pyber, R\"odl and Szemer\'edi~\cite{PRS}. A construction due
to Verstra\"ete is described by Dellamonica and R\"odl~\cite{DR}.

One of our requirements for a good construction is that it must satisfy the
online property, which further limits the options. The method described
here fulfils these needs (though see~\S\ref{sec:postscript}). It is in some
ways almost opposite in
approach to that in~\cite{ST,STe}. We endeavour to motivate the method to
the extent that we can. Nevertheless, the reason behind one or two features
might become clearer after reading~\S\ref{sec:calc}, in which the
properties of the algorithm are proven.

\subsection{Example for $r=2$}
\label{subsec:egr2}

To introduce some (but not all) of the ideas used in the algorithm, we
prove Theorem~\ref{thm:cover} for independent sets in $2$-graphs.  As
mentioned, for regular graphs, the strategy reduces to something close to,
but not identical to, that of Sapozhenko~\cite{Sap1,Sap2,Sap5,Sap4}.  We have
tried to make the notation as similar as possible to that used in the main
algorithm, although the reader should be aware that there are some
differences.

An important general feature of the method can be described immediately,
which holds as well for $r$-graphs as for 2-graphs. The construction of a
subset $T_1$ from an independent set~$I$, and the construction from~$T_1$
of a set which contains~$I$, are achieved by the same algorithm, run in two
slightly different modes that we call {\em prune mode} and {\em build
  mode}. In prune mode the algorithm receives $I$ as input and it
outputs~$T_1$; in build mode the algorithm receives $T_1$ as input and it
outputs a container.

Let the vertices of $G$ be the set $[n]$. This just gives an ordering to
the vertices: we assume no properties of the ordering. In prune mode, the
algorithm begins with $T_1=\emptyset$. It then examines the vertices one by
one in the order $1,\ldots,n$; when it reaches vertex~$v$, it checks
whether $v$ is in $I$ and whether $v$ has some further property --- we
might say there is some ``membership rule'' that $v$ must satisfy. If $v\in I$
and $v$ passes the membership rule, then $v$ is added to~$T_1$, otherwise $v$
is not added to~$T_1$. This is done for $v=1,\ldots,n$ in turn, and the
algorithm then outputs~$T_1$. In build mode, the algorithm initialises a
set~$C_1=[n]$. It then examines each vertex $v$ in turn. If $v$
satisfies the membership rule, then $v$ is removed from~$C_1$. This is done for
$v=1,\ldots,n$ in turn, and the algorithm then outputs $C_1\cup T_1$.

It will be seen that $T_1\subset I$, by construction, and that $I\subset
C_1\cup T_1$, because the vertices left in $C_1$ failed the membership rule,
and this includes all the vertices of $I$ except those in~$T_1$. (Notice in
passing that we did not need $I$ to be independent.)

As stated, the build algorithm did not appear to make use of the
input~$T_1$. However, the algorithm will also construct an auxiliary
structure (in the case $r=2$, this is just a set) along the way. The membership
rule is specified in terms of the current state of this auxiliary
structure. The structure will be updated only when a vertex is in $T_1$ and
passes the rule, so both modes of the algorithm have the information to
update the structure properly.

To be more specific, here is the membership rule we use for $r=2$: the
requirement is of course to produce small sets~$T_1$ and not too large
sets~$C_1$. Let $G$ be a $2$-graph with vertex set~$[n]$ and average
degree~$d$. Let~$\zeta>0$. The algorithm uses an auxiliary set $\Gamma_1
\subset [n]$, which is initally empty. For $v=1,\ldots,n$, consider
\[
F(v) = \{w \in [v+1,n] : \{v,w\} \in E(G) \mbox{ and } w \not\in \Gamma_1 \}.
\]
We take the following as the membership rule: that $|F(v)|\ge \zeta d(v)$.  In
addition to checking the rule, if $v\in T_1$ then the algorithm adds $F(v)$
to~$\Gamma_1$. Both modes of the algorithm know whether $v\in T_1$: prune
mode because it is constructing~$T_1$, and build mode because it is
given~$T_1$. Hence both modes construct the same $\Gamma_1$ and both are
using the same rule. 

Note that, as the algorithm proceeds, $\Gamma_1$ is just the set of
vertices which have an earlier neighbour in~$T_1$, because $F(v)$ is
precisely those vertices $w$ for which $v$ is an earlier neighbour but no
other vertex yet in $T_1$~is.

We now use, for the first time, the fact that $I$ is independent. Because
the vertices in $\Gamma_1$ are neighbours of those in~$T_1\subset I$, we
know $I\cap \Gamma_1=\emptyset$. We also know that $I\subset C_1\cup T_1$,
the set output by build mode. So our final container is $C(T_1)=(C_1\cup
T_1)-\Gamma_1$. The notation $C(T_1)$ means that $C(T_1)$ can be
constructed just from~$T_1$.

The function $C:\mathcal{P}[n] \to \mathcal{P}[n]$ satisfies the main points of
Theorem~\ref{thm:cover} for $r=2$. Recall from~\S\ref{subsec:codeg} that
$\delta(G,\tau)\le 1/\tau d$ for a 2-graph, so take $\tau=1/\zeta d$. The
set $T_0$ is not needed so take $T_0=\emptyset$. We know $I\subset C(T_1)$,
which is assertion~(a) of the theorem. Now we check the measures of $T_1$
and of $C(T_1)$.

Whenever a vertex $v$ is added to~$T_1$, the rule is satisfied, so
$\Gamma_1$ increases by at least $\zeta d(v)$. But $|\Gamma_1|\le n$, so $
\zeta \mu(T_1) = (1/nd) \sum_{v \in T_1} \zeta d(v) \le (1/n
d)|\Gamma_1|\le 1/d $. Therefore $\mu(T_1)\le 1/\zeta d$, which gives
Theorem~\ref{thm:cover}(b) comfortably.

Write $C^*=C_1-\Gamma_1$. Every $w$, with $vw\in E(C^*)$ for some $v<w$, lies in
$F(v)$ by definition of $C_1$. (Note here that $\Gamma_1$ grows during the
procedure so, at the time $F(v)$ was defined, $\Gamma_1$ might have been
smaller than it is at the end, or in other words there might have been
$u\in F(v)$ when the rule was tested, such that $u$ ends up in
$\Gamma_1$ and not in~$C^*$. But this only helps.) Since $v\in C_1$, $v$
failed the rule, so $|F(v)|< \zeta d(v)$. Therefore $ e(G[C^*]) \le
\sum_{v\in C^*} \zeta d(v) \le \zeta nd $.  On the other hand,
$e(G[C^*])\ge (\mu(C^*)-1/2)nd$ by inequality~(\ref{eqn:mubig}). Thus
$\mu(C^*) \le 1/2 + \zeta$. Finally, $C(T_1) = (C_1\cup
T_1)-\Gamma_1\subset C^*\cup T_1$ and so $ \mu(C) \le \mu(C^*)+\mu(T_1)\le
1/2 + \zeta + 1/\zeta d $, giving Theorem~\ref{thm:cover}(d).

We have not completely proved Theorem~\ref{thm:cover} for $r=2$, since we
have not shown condition~(c): indeed, at present the bound on the degree
measure of~$T_1$ does not imply a bound on~$|T_1|$, since this requires a
lower bound on the degrees of the vertices of the graph. One way round this
is to amend the membership rule so as to allow $v\in T_1$ only if $d(v)\ge\zeta
d$. It can be shown that this increases $\mu(C(T_1))$ by only~$\zeta$, and
all vertices in $T_1$ now have degree at least~$\zeta d$, so $|T_1|\zeta d
\le \sum_{v\in T_1}d(v)=\mu(T_1)nd$. Condition~(c) then follows from~(b).

We did not check the online property, but that is not hard to do. Nor did
we check the cases where $I$ is not independent, but, as has been seen, the
independence of~$I$ was barely used. In fact, the set $T_0$ is there to
take care of these cases.

\subsection{The general algorithm}\label{subsec:algorithm}

For general $r$-graphs we use the same method of an algorithm running in
prune/build mode as used in~\S\ref{subsec:egr2}, but we need a membership rule
that will handle edges of size $r>2$. Moreover, if the $r$-graph is not
simple, we need to handle overlapping edges carefully.

Given an $r$-graph $G$ with vertex set $[n]$, we in fact run the algorithm $r-1$
times; we label these runs by $s=r-1,r-2,\ldots,1$ in turn. In run~$s$, the
algorithm has input~$I$ and output~$T_s$ (in prune mode) or input $T_s$ and
output $C_s$ (in build mode). Along the way it builds an auxiliary {\em
  multigraph}~$P_s$. Here, $P_s$ is $s$-uniform but multiple edges are
allowed; in other words $E(P_s)$ is a {\em multiset}. The multigraph $P_s$
is constructed from $P_{s+1}$, which is supplied by the previous run of the
algorithm. For the first run, when $s=r-1$, we supply $P_r=G$.

Each edge $\{u_{s-1},u_{s-2},\ldots,u_0\}\in E(P_s)$ with
$u_{s-1}<u_{s-2}<\cdots<u_0$ will come from an edge
$\{v_{r-1},v_{r-2},\cdots,v_s,u_{s-1},u_{s-2},\ldots,u_0\}\in E(G)$, where
$v_{r-1}<\cdots<v_s<u_{s-1}$ and $v_j\in T_j$, $r-1\ge j\ge s$.
Equivalently, each edge of $P_s$ is an edge of $P_{s+1}$ whose first
vertex, which is in~$T_s$, has been removed. The reason $P_s$ is defined as
a multigraph, even if $G$ itself does not have multiple edges, is so that
distinct edges of $G$ give rise to distinct edges of $P_s$. As will be seen,
this allows more vertices to be added to~$T_s$, which in turn conveys more
information about the independent set.

The multigraph $P_1$ is $1$-uniform: its edges are sets containing single
vertices. If $\{u_0\}\in E(P_1)$ then there is an edge
$\{v_{r-1},v_{r-2},\cdots,v_1,u_0\}\in E(G)$ with $v_j\in T_j$, $r-1\ge
j\ge 1$. So, if $I$ is an independent set and the sets $T_j$ are chosen
within $I$, as they will be, then $u_0\notin I$, and so the container $C$
can be chosen from vertices not in~$E(P_1)$. Note here that $E(P_1)$ is
playing the role that $\Gamma_1$ did in~\S\ref{subsec:egr2}. Our first aim,
then, is to ensure that $E(P_1)$ is as large as possible, and to this end
we attempt to make $E(P_s)$ large for each~$s$. However this aim has to be
balanced against keeping the sets $T_s$ small.

Hence we shall choose a parameter $\tau$, so that, roughly speaking, $T_s$
will comprise a proportion $\tau$ of the vertices (in degree measure), and
we aim to design the algorithm so that, ideally, the size of $E(P_s)$ will be
roughly $\tau$ times the size of $E(P_{s+1})$. This means the average
degree of $P_s$ will typically be around $\tau^{r-s}d$.  The parameter
$\tau$ is the same as that discussed in~\S\ref{subsec:codeg} and the
constraint $\tau\ge d^{-1/(r-1)}$ described there is precisely what is
needed to ensure that $E(P_1)$ contains something worthwhile.

However not every edge of $G$ with its first $r-s$ vertices in
$T_{r-1},\ldots,T_s$ will be admitted as an edge of~$P_s$, but only a
selection of these. We do not allow edges into $P_s$ if they increase the
degree of some vertex, or the degree of some subset $\sigma\subset[n]$,
beyond some agreed threshold. We define the degree of
$\sigma$ in the multigraph~$P_s$ to be
$$
d_s(\sigma)\,=\,|\{e\in E(P_s):\sigma\subset e\}|\,,
$$
where we are counting edges with multiplicity in the
multiset $E(P_s)$. (Naturally we may write $d_s(v)$ instead of $d_s(\{v\})$
if $v\in[n]$.)
There are several reasons for wanting to
bound the degrees in $P_s$. One reason is the hope of keeping the vertex
degrees near to $\tau^{r-s}$ times the degrees in~$G$, so that degree
measure in $P_s$ relates to measure in~$G$; in particular, small sets of
vertices cannot account for most of the edges of $P_s$ unless those sets
have large measure in~$G$. A second reason for controlling degrees of
subsets is that only by doing so can we restrain the degrees of vertices at
later stages: this comes out in the proof of Lemma~\ref{lem:maxdeg}.

So we proceed in the following way. We begin with $P_r=G$, and then apply
the algorithm below to construct $P_s$ from $P_{s+1}$ using $T_s$, with $s$
taking the values $r-1,r-2,\ldots,1$ in turn.  During the application of
the algorithm, the degrees $d_s(\sigma)$ in $P_s$ will grow, as edges are
added. We denote by $\Gamma_s$ the collection of vertices and subsets whose
degrees have reached their bound, and we do not permit the addition to
$P_s$ of any edge which contains a current member of $\Gamma_s$. The set
$\Gamma_s$ will grow too during the construction. We remark that, as
defined in~\S\ref{subsec:egr2}, $\Gamma_1$ was the the set of vertices in
$P_1$ that have positive degree: in the general algorithm we have the
option of specifying a larger threshold for entry into~$\Gamma_1$.

Two real numbers are included in the input to the algorithm. The parameter
$\tau$ is the more important and has already been discussed. The parameter
$\zeta$ is a small constant, used in the rule to decide membership
of~$T_s$.

As in~\S\ref{subsec:egr2}, the membership rule involves a collection $F$ of
edges in $P_s$ with first vertex~$v$, the rule being: 
$d(v)\ge \zeta d$ and $|F|\ge\zeta \tau^{r-s-1}d(v)$. 
Further, if $v\in T_s$, then $F$ is added
to $E(P_s)$, and the set $\Gamma_s$ is updated appropriately. In the
general algorithm $F$ is a multiset rather than a set, to maintain the
condition that edges of $P_s$ correspond to different edges of~$G$.

Again as in~\S\ref{subsec:egr2}, the independence of the set $I$ is not
actually used by the algorithm, and it is useful to define the algorithm
for general subsets $I\subset[n]$.

\begin{tabbing}
\quad\={\sc Algorithm}\\
\>\quad\={\sc input} \quad\=an $r$-graph $G$ on vertex set~$[n]$\\
\>\>\>an $(s+1)$-multigraph $P_{s+1}$ on vertex set~$[n]$\\
\>\>\>parameters $\tau,\zeta >0$\\
\>\>\>{\em in prune mode} \= a subset
$I\subset [n]$\\ 
\>\>\>{\em in build mode}  \> a subset
$T_s\subset[n]$\\
\\
\>\>{\sc output} \>an $s$-multigraph $P_s$ on vertex set~$[n]$\\
\>\>\>{\em in prune mode}  \> a subset
$T_s\subset [n]$\\
\>\>\>{\em in build mode}  \> a subset
$C_s\subset [n]$\\
\\
\>\>put $E(P_s)=\emptyset$ and $\Gamma_s=\emptyset$\\
\>\>{\em in prune mode} \= put $T_s=\emptyset$\\
\>\>{\em in build mode} \> put $C_s=[n]$\\
\\
\>\>for $v=1,2,\ldots,n$ do:\\
\>\>\quad\=let $F=\{f\in[v+1,n]^{(s)}: \{v\}\cup f\in E(P_{s+1}),\,
\mbox{ and }\forall \sigma\in \Gamma_s\,\, \sigma\not\subset f\,\}$\\
\>\>\>\quad[{\em here $F$ is a multiset with multiplicities inherited from 
$E(P_{s+1})$}]\\
\>\>\>{\em in prune mode} \quad\= if $d(v)\ge\zeta d$ and $|F|\ge\zeta
\tau^{r-s-1}d(v)$ and $v\in I$, add $v$ to $T_s$ \\
\>\>\>{\em in build mode} \> if $d(v)\ge\zeta d$ and $|F|\ge\zeta
\tau^{r-s-1}d(v)$, remove $v$ from $C_s$\\
\>\>\>if $v\in T_{s}$ then\\
\>\>\>\quad\= add $F$ to $E(P_s)$\\
\>\>\>\>for each $u\in[v+1,n]$,\qquad\= if $d_s(u)> \tau^{r-s}d(u)$, add
$\{u\}$ to $\Gamma_s$\\
\>\>\>\>for each $\sigma\in[v+1,n]^{(>1)}$, \>if $d_s(\sigma)> 2^s\tau
d_{s+1}(\sigma)$, add $\sigma$ to $\Gamma_s$
\end{tabbing}

The algorithm therefore adds to $P_s$ $s$-edges which, with $v\in T_s$
as first vertex, form an edge of $P_{s+1}$ and which do not contain (at
that moment) any subset in~$\Gamma_s$. The degree threshold for a vertex
entering $\Gamma_s$ is in terms of its degree $d(u)$ in the original graph
$G$, whereas for a larger subset $\sigma$ it is in terms of its degree in
$P_{s+1}$; this difference is for technical reasons arising in the proof of
Lemma~\ref{lem:mucont}. See~\S\ref{sec:postscript} for further comment.

The basic feature of prune/build modes using a membership rule is now
invoked: if $T_s$ is constructed from $I$ by running the algorithm in prune
mode, and then $C_s$ is constructed from $T_s$ by re-running the algorithm
in build mode, then $I\subset C_s\cup T_s$ holds. Another
option for a container arises if $I$ is independent. We noted earlier that,
in this case, if $\{u_0\}\in E(P_1)$ then $u_0\notin I$. In particular, if
$I$ is independent and $\{u_0\}\in\Gamma_1$ then $u_0\notin I$, because
$u_0$ is a vertex whose degree in~$P_1$ has risen above some (non-negative)
threshold so $\{u_0\}\in E(P_1)$. Abusing notation slightly, we write
$I\subset[n]-\Gamma_1$.  Therefore each of $C_s\cup T_s$, $1\le s\le r-1$,
and $[n]-\Gamma_1$ is a container for $I$; our aim is to ensure that at
least one of these is a good container, meaning that its size is not close
to~$[n]$.

Here then is a way of viewing the operation of the algorithm. If
$\Gamma_1$ is large then $[n]-\Gamma_1$ is a good container for~$I$. If
$\Gamma_1$ is not large then, since the degrees in $P_1$ are bounded, the
average degree of $P_1$ must be small. But $P_r=G$, whose average degree is
not small, so there must be some $s$ for which $P_{s+1}$ has large average
degree (of order $\tau^{r-s-1}d$) but $P_s$ has small average degree (much
smaller than $\tau^{r-s}d$). Since the degrees are bounded, there
must have been plenty of vertices of $P_{s+1}$ which could have contributed
edges to $E(P_s)$ but did not do so. Why did they not do so? Only because
they are not in $I$ and so not available for~$T_s$. These are exactly the
vertices which are removed from $C_s$: hence for this value of $s$,
$C_s\cup T_s$ will be a good container for~$I$.

We add an observation here about simple graphs~$G$, for use when
considering the online property (Lemma~\ref{lem:online}). The set
$\Gamma_s$ is non-uniform in order to handle sets $\sigma$ for which
$d_s(\sigma)$ has become too large. If $G$ is simple it is unnecessary to
cater for such a possibility. If $|\sigma|\ge2$ then $\sigma$ can appear in
at most one edge of $G$, so $d_s(\sigma)$ will be zero or one. Formally,
for the way the algorithm is stated, $\sigma$ will quite likely be inserted
into $\Gamma_s$ as soon as it appears in some edge of~$F$ (because
$2^s\tau^{r-s}$ is likely to be less than one). However this has no effect
on the subsequent construction of $P_s$ because $\sigma$ will never again
appear in~$F$. Hence, for simple graphs, the appearance in the algorithm of
$\sigma$ with $|\sigma|\ge2$ can be ignored.

\subsection{Properties of the construction}

We are thus led to two important definitions.

\begin{defn}\label{def:ttuple}
  Let $G$ be an $r$-graph on vertex set $[n]$ and let $I\subset[n]$. Let
  $\tau,\zeta >0$. Let $T_{r-1},\ldots,T_1$ be the sets constructed by
  repeated applications of the algorithm in prune mode. Let $B=\{v\in[n]:
  d(v)<\zeta d\}$. Let $T_0=I\cap(\Gamma_1\setminus B)$. Then we define
  $$T(G,I,\tau,\zeta)\,=\,(T_{r-1},\ldots,T_1,T_0)\,\in\,\mathcal{P}(I)^r\,.
  $$
\end{defn}

The $r$-tuple $T$ is the fruit of running the algorithm in prune mode, from
which the container for $I$ will be built. As noted earlier, if $I$ is an
independent set then $I\cap \Gamma_1=\emptyset$, so $[n]-\Gamma_1$ is a
container for~$I$. We shall in fact use the slightly larger container
$[n]-(\Gamma_1\setminus B)$, but the difference is negligible because
$\mu(B)<\zeta$. The introduction of $B$ ensures that, just as the vertices
in $T_{r-1},\ldots,T_1$ have degree at least $\zeta d$, so do those
in~$T_0$. This will be needed to prove Theorem~\ref{thm:cover}(c).

Now comes the main definition --- that of containers.

\begin{defn}\label{def:container}
  Let $G$ be an $r$-graph on vertex set $[n]$ and let
  $T=(T_{r-1},\ldots,T_1,T_0)\in\mathcal{P}([n])^r$. Let
  $\tau,\zeta>0$. Let $C_{r-1},\ldots,C_1$ be constructed by repeated
  applications of the algorithm in build mode, using
  $T_{r-1},\ldots,T_1$. Let $B=\{v\in[n]: d(v)<\zeta d\}$. Let
  $C_0=[n]-(\Gamma_1\setminus B)$.  The {\em container} $C(G,T,\tau,\zeta)$
  is then
  $$
  C(G,T,\tau,\zeta)\,=\,(C_{r-1}\cap C_{r-2}\cap\cdots\cap C_1\cap C_0)
  \cup T_{r-1}\cup T_{r-2}\cdots\cup T_1\cup T_0\,.
  $$
\end{defn}

\begin{lem}\label{lem:container}
If $T=T(G,I,\tau,\zeta)$ then $I \subset C(G,T,\tau,\zeta)$.
\end{lem}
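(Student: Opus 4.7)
The plan is to show that every $v\in I$ lies in $C(G,T,\tau,\zeta)$. If $v\in T_s$ for some $s\in\{0,1,\ldots,r-1\}$, membership is immediate from the definition of the container as a union that includes all the $T_s$. So I would assume $v\notin T_{r-1}\cup\cdots\cup T_0$ and aim to show $v\in C_{r-1}\cap\cdots\cap C_1\cap C_0$.

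For the $s=0$ coordinate this is easy: since $T_0=I\cap\Gamma_1$ and $v\in I$, the assumption $v\notin T_0$ forces $v\notin\Gamma_1$, so $v\in C_0=[n]\setminus\Gamma_1$. For $1\le s\le r-1$, the key is to verify that running the algorithm in build mode on the $T_s$ produced by prune mode reconstructs exactly the same multigraphs $P_{r-1},\ldots,P_1$ and the same successive families $\Gamma_{r-1},\ldots,\Gamma_1$ as prune mode did. I would prove this by downward induction on $s$ starting from the common input $P_r=G$. The point is that edges are added to $E(P_s)$ and new subsets are added to $\Gamma_s$ only at steps $v\in T_s$; since the input $T_s$ is the same in both modes and the multiset $F$ at step $v$ is a function only of $P_{s+1}$ and the current state of $\Gamma_s$, the two runs evolve in lockstep vertex by vertex.

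Given this identification, the conclusion drops out. Fix $1\le s\le r-1$ with $v\notin T_s$. In prune mode, $v\in I$ was not placed in $T_s$, so the threshold condition $|F|\ge\zeta\tau^{r-s-1}d(v)$ must have failed at step $v$. By the lockstep claim, the same $F$ is computed at step $v$ in build mode, the same condition fails, and so $v$ is not removed from $C_s$; that is, $v\in C_s$. Combining with the $s=0$ case gives $v\in C_0\cap C_1\cap\cdots\cap C_{r-1}\subset C(G,T,\tau,\zeta)$, as required.

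The main obstacle is the bookkeeping underlying the lockstep claim: one must set up a nested induction (on $s$ outermost, and within each $s$ on the vertex counter $v=1,2,\ldots,n$) and be careful about the order in which each step updates $F$, $T_s$ (in prune mode) or $C_s$ (in build mode), and then $E(P_s)$ and $\Gamma_s$. Once this synchronisation is established, the rest of the argument is a short definition chase.
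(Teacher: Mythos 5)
Your proof is correct and follows essentially the same route as the paper's: the key observation is that $I\subset C_s\cup T_s$ for each $s$ (which the paper states just before Definition~\ref{def:ttuple}, justified by noting that build mode on $T_s$ triggers the threshold test at exactly the same vertices as prune mode), combined with the trivial $s=0$ case. You spell out the lockstep synchronisation as an explicit nested induction where the paper treats it as immediate from the algorithm's description, but the substance is the same.
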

\begin{proof} We noted earlier that $I\subset C_s\cup T_s$ for
  $s>0$. Moreover, $I\subset C_0\cup T_0$ by definition, since
  $C_0=[n]-(\Gamma_1\setminus B)$ and $T_0=I\cap(\Gamma_1\setminus
  B)$. Hence $I \subset C(G,T,\tau,\zeta)$.
\end{proof}

Before computing the size of the containers $C(G,T,\tau,\zeta)$ and the
number of them, we check the online property, namely that
$C(G,T,\tau,\zeta)\cap[w]$ is determined just by $T\cap[w]$.
Recall that we are asserting the online property only for simple graphs.

\begin{lem}\label{lem:online}
  Let $G$ be a simple $r$-graph on vertex set $[n]$ and let
  $T\in\mathcal{P}([n])^r$. Then, for each $w\in
  [n]$, $C(G,T,\tau,\zeta)\cap [w]=C(G,T\cap[w],\tau,\zeta)\cap[w]$ holds.
\end{lem}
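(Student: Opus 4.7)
The plan is to prove the online property by a nested induction: outer on the stage number $s$, descending from $r-1$ down to $1$, and inner on the step $v$, ascending from $1$ to $n$ within each stage. The inductive claim is that after step $v$ of stage $s$, the portions of $E(P_s)$ and of $\Gamma_s$ consisting of edges or subsets that intersect $[w]$ agree between the full run (with input $T$) and the restricted run (with input $T\cap[w]$), and in build mode $C_s\cap[w]$ at that point also agrees. The outer base case $s+1=r$ is immediate since $P_r=G$ is identical in both runs.

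For the inner step at vertex $v$ in stage $s$, I would split into two cases. When $v\le w$, the condition $v\in T_s$ is equivalent to $v\in T_s\cap[w]$, and the multiset $F=\{f\in[v+1,n]^{(s)}:\{v\}\cup f\in E(P_{s+1})\text{ and }\sigma\not\subset f\text{ for all }\sigma\in\Gamma_s\}$ is computed identically in both runs: each queried $(s+1)$-edge $\{v\}\cup f$ has minimum vertex $v\le w$, so its membership in $E(P_{s+1})$ agrees by the outer hypothesis, and $\Gamma_s$ at time $v$ agrees by the inner hypothesis for step $v-1$. Consequently $F$, the decision to remove $v$ from $C_s$ (or to add $v$ to $T_s$), and the subsequent updates to $E(P_s)$, $\Gamma_s$, and $C_s\cap[w]$ all coincide. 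When $v>w$ the restricted run skips the ``if $v\in T_s$'' branch entirely, while any additions in the full run are edges and subsets contained in $[v+1,n]\subset[w+1,n]$ and therefore do not alter the $[w]$-portions.

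Combining over all $v$ and $s$ yields agreement of every $C_s\cap[w]$ and every $T_s\cap[w]$, and since $C(T)=(C_{r-1}\cap\cdots\cap C_0)\cup T_{r-1}\cup\cdots\cup T_0$, we conclude $C(T)\cap[w]=C(T\cap[w])\cap[w]$. The main technical obstacle is the case $v\le w$: the update rule $d_s(\sigma)>2^s\tau\,d_{s+1}(\sigma)$ for $\sigma\in[v+1,n]^{(>1)}$ depends on the globally computed degree $d_{s+1}(\sigma)$ in the input multigraph $P_{s+1}$, which can in principle differ between runs for subsets $\sigma\subset[w+1,n]$; the crux of the bookkeeping is to confirm that any resulting discrepancy in $\Gamma_s$ is confined to subsets of $[w+1,n]$ and does not propagate back to affect the decision at any $v\le w$, so that the inductive invariants survive intact.
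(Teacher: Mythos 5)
Your proposal correctly identifies the double-induction structure, disposes of the steps $v>w$ correctly, and correctly observes that at steps $v\le w$ the test $v\in T_s$ and the membership query $\{v\}\cup f\in E(P_{s+1})$ both fall under the inductive hypothesis because the $(s+1)$-edge $\{v\}\cup f$ meets $[w]$. This is essentially the skeleton of the paper's own (very terse) argument.

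However, the obstacle you flag in your final paragraph is precisely where the content of the lemma lies, and you leave it unresolved. You write that what remains is ``to confirm that any resulting discrepancy in $\Gamma_s$ is confined to subsets of $[w+1,n]$ and does not propagate back to affect the decision at any $v\le w$''; but that is not innocuous bookkeeping, and as stated it is not obviously true. At a step $v<w$, the multiset $F$ is indexed by $f\in[v+1,n]^{(s)}$, and such an $f$ can simultaneously contain a vertex of $[v+1,w]$ and a subset $\sigma\subset[w+1,n]$ with $|\sigma|\ge2$. The rule that puts $\sigma$ into $\Gamma_s$ compares $d_s(\sigma)$ against $2^s\tau\, d_{s+1}(\sigma)$, and $d_{s+1}(\sigma)$ may be strictly larger in the run on $T$ than in the run on $T\cap[w]$ (the former may have added $(s+1)$-edges inside $[w+1,n]$ at steps $>w$ of stage $s+1$). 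So the two runs can disagree about whether $\sigma\in\Gamma_s$ by the time step $v$ is processed; that changes $|F|$, hence possibly whether $v\le w$ is removed from $C_s$, and changes which $[w]$-meeting $s$-edges enter $P_s$ at step $v$ --- so the invariant you wish to hand to stage $s-1$ is not obviously preserved. A complete proof needs to show either that a $\sigma\subset[w+1,n]$ cannot enter $\Gamma_s$ at any step $v\le w$, or that such a disagreement in $\Gamma_s$ cannot change $F$ at any $v\le w$; your write-up does neither, so the proof as it stands has a genuine gap at exactly the point you identified.
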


\begin{proof} The tuple $T$ supplies sets $T_{r-1},\ldots,T_1$ as inputs
  for the algorithm in build mode, which produces sets $C_{r-1},
  C_{r-2},\ldots, C_1$ and $\Gamma_1$. The set $T_0$ is supplied directly
  by $T$, and we take $C_0=[n]-(\Gamma_1\setminus B)$. Let $T'=T\cap[w]$,
  where $T'=(T'_{r-1},\ldots,T'_1,T'_0)$ and $T_s'=T_s\cap[w]$, $0\le s\le
  r-1$. Let $C_{r-1}',C_{r-2}',\ldots, C_1',C_0'$ and $\Gamma_1'$ be the
  corresponding sets produced when the inputs to the algorithm are
  $T'_{r-1},\ldots,T'_1,T'_0$. We need to show that
  $C_s'\cap[w]=C_s\cap[w]$ for all~$s$; Definition~\ref{def:container} then
  shows $C(G,T,\tau,\zeta)\cap [w]=C(G,T\cap[w],\tau,\zeta)\cap[w]$.

  Let $P_r,\ldots,P_1$ be the multigraphs used during the runs of the
  algorithm with the original inputs, and $P_r',\ldots,P'_1$ those used
  with the truncated inputs. The crucial point is that, though $P_s$ and
  $P'_s$ might have different edges inside the vertex set~$[w+1,n]$, they
  are otherwise identical; that is, if $e$ is an $s$-edge with $e\cap
  [w]\ne\emptyset$, then $e\in E(P_s)$ if and only if $e\in E(P'_s)$ (with
  the same multiplicity). This can be seen for $s=r,r-1,\ldots,1$ in turn,
  given that $P_r=P_r'=G$. Consider the run of the algorithm building
  $P_s$, as $v$ runs through $v=1,\ldots,w$. As mentioned at the end
  of~\S\ref{subsec:algorithm}, subsets $\sigma$ with $|\sigma|\ge2$ have no
  effect because $G$ is simple. By induction on~$v$, the singletons $\{u\}$
  in $\Gamma_s$ are the same as those in $\Gamma'_s$ while $v\in[w]$, and
  the set $F$ is the same for $P_s$ as for $P'_s$, because $F$ is defined
  by edges in $P_{s+1}$ whose first vertex lies in~$[w]$. Hence the
  membership rule, the set of singletons $\{u\}$ added to $\Gamma_s$, and
  the set of edges $F$ added to $E(P_s)$,
  are the same in $P'_s$ and $P_s$.  In particular,
  $C_s'\cap[w]=C_s\cap[w]$ for $s\ge1$. For the same reasons,
  $\Gamma_1'\cap[w]=\Gamma_1\cap[w]$. The set $B$ is defined in terms of
  $G$ itself, and so $C_0'\cap [w]=[w]-(\Gamma_1'\setminus
  B)=[w]-(\Gamma_1\setminus B)=C_0\cap[w]$.
\end{proof}

\section{Container calculations}\label{sec:calc}

In this section we estimate the measure of the tuples $T(G,I,\tau,\zeta)$
and of the containers $C(G,T,\tau,\zeta)$, thereby proving
Theorem~\ref{thm:cover}.

\subsection{Degrees and co-degrees}\label{subsec:maxdeg}

Before making these estimates we need information on how large the degrees
can be in $P_s$. The intention behind the set $\Gamma_s$ is to prevent
degrees being much larger than the target degrees, namely $\tau^{r-s}d(u)$
for the vertex~$u$; after the degree of~$u$ attains this level, no
further edges containing~$u$ are added to $P_s$. However,
when a vertex $u$ enters $\Gamma_s$, it
does so because some multiset $F$ has been added to $E(P_s)$. Since $F$ can
include many edges that contain~$u$, the degree $d_s(u)$ can increase
significantly in one step, from an initial value at most the target
value~$\tau^{r-s}d(u)$ to something much larger. The extent of this problem
depends ultimately on the way the edges of $G$ overlap each other.
The reason $\Gamma_s$ is defined the way it is in the algorithm, is to keep
control of the degree problem without increasing~$\tau$ more than is
necessary. This definition lies at the heart of the efficiency of the
algorithm. Control of the degrees can be expressed succinctly in terms of
the co-degree function $\delta(G,\tau)$.

First we need a small calculation.

\begin{lem}\label{lem:asj}
  For $2\le s\le r$ and $2\le j\le s$, let $a_s^{(j)}$ be given by the
  equations $a_r^{(j)}=\delta_j$ and $a_s^{(j)}=2^sa_{s+1}^{(j)} +
  a_{s+1}^{(j+1)}$ for $s<r$, where $\delta_j$ was defined in
  Definition~\ref{defn:delta}. Then $a_s^{(2)} \le
  4^{2-s}\delta(G,\tau)$ holds for $s\ge2$.
\end{lem}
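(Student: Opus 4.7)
My plan is to unroll the recurrence to obtain an explicit formula for $a_s^{(2)}$, and then establish the bound coefficient by coefficient against the expansion of $\delta(G,\tau)$. Unrolling $a_s^{(j)}=2^sa_{s+1}^{(j)}+a_{s+1}^{(j+1)}$ level by level until reaching the base values $a_r^{(k)}=\delta_k$, each step at level $t\in\{s,\ldots,r-1\}$ is either a ``stay'' (factor $2^t$, upper index unchanged) or an ``increase'' (factor $1$, upper index $+1$). Summing over the possible orderings of stays versus increases, the coefficient of $\delta_k$ is the elementary symmetric polynomial of the appropriate degree in $\{2^s,\ldots,2^{r-1}\}$, giving
\[
a_s^{(j)} \;=\; \sum_{k=j}^{\min(r,\,r-s+j)} e_{r-s+j-k}(2^s,2^{s+1},\ldots,2^{r-1})\,\delta_k.
\]
Since every $\delta_k\ge 0$ and $\delta(G,\tau)=2^{\binom{r}{2}-1}\sum_k 2^{-\binom{k-1}{2}}\delta_k$, it will suffice to verify the bound term by term, i.e.\ to show $e_m(2^s,\ldots,2^{r-1})\le 4^{2-s}\cdot 2^{\binom{r}{2}-1-\binom{k-1}{2}}$ for each $k\in[2,r-s+2]$ with $m:=r-s-k+2$.

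The key technical inequality is then the bound on $e_m(2^s,\ldots,2^{r-1})$. Writing $R=r-s$, each term in this polynomial is $2^{\sum T}$ for some $m$-subset $T\subset\{s,\ldots,r-1\}$; the maximum is $2^{ms+\binom{R}{2}-\binom{R-m}{2}}$, attained by the top $m$ indices. The deficits $\max - \sum T$ are in bijection with partitions fitting in an $m\times(R-m)$ box, so $\sum_T 2^{-\mathrm{deficit}(T)}$ equals the Gaussian binomial coefficient $\binom{R}{m}_{1/2}$ (i.e.\ the box-partition generating function at $q=1/2$). Using the product formula together with the elementary observation that $2^i/(2^i-1)\le 2$ for every $i\ge 1$, together with the symmetry $\binom{R}{m}_{1/2}=\binom{R}{R-m}_{1/2}$, one obtains $\binom{R}{m}_{1/2}\le 2^{R-m}$, and therefore
\[
e_m(2^s,\ldots,2^{r-1})\;\le\;2^{ms+(R-m)+\binom{R}{2}-\binom{R-m}{2}}.
\]

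Plugging this bound into the target coefficient inequality and simplifying, using $\binom{r}{2}=\binom{s}{2}+\binom{R}{2}+sR$ and $R-m=k-2$, one arrives at an arithmetic inequality that factors as $(s-2)(s-7+2k)\ge 0$. This holds for all $s\ge 2$ and $k\ge 2$: the factor $s-2$ is nonnegative, and for $s\ge 3$ the second factor is at least $s-3\ge 0$, while at $s=2$ the first factor already vanishes. The main obstacle is obtaining a sharp enough bound on $\binom{R}{m}_{1/2}$, since crude estimates (such as replacing the Gaussian binomial by the ordinary $\binom{R}{m}$) are not strong enough to close the arithmetic, which is tight in several boundary cases.
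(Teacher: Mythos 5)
Your proof plan is correct, and it takes a genuinely different route from the paper, though the two ultimately hinge on the same coefficient bound. The paper first observes $a_s^{(2)}\ge 2^s a_{s+1}^{(2)}\ge 4a_{s+1}^{(2)}$, reducing everything to the single inequality $a_2^{(2)}\le\delta(G,\tau)$; it then proves by a short induction (on $r-s$) that the coefficient of $\delta_{j+\ell}$ in $a_s^{(j)}$ is at most $2^{\binom{r}{2}-\binom{s+\ell}{2}+\ell}$, and evaluates this at $s=j=2$ to match the coefficients in $\delta(G,\tau)$ exactly. Your approach skips the reduction step: you unroll the recurrence into the exact formula $a_s^{(2)}=\sum_k e_{R+2-k}(2^s,\ldots,2^{r-1})\delta_k$ (with $R=r-s$), identify $e_m(2^s,\ldots,2^{r-1})=2^{ms+\binom{R}{2}-\binom{R-m}{2}}\binom{R}{m}_{1/2}$, bound the Gaussian binomial by $2^{R-m}$ via the $q$-product formula and the symmetry $\binom{R}{m}_{1/2}=\binom{R}{R-m}_{1/2}$, and then verify the resulting arithmetic for all $s$ and $k$ directly via the factorization $(s-2)(s-7+2k)\ge 0$. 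I checked that your $e_m$ bound, specialized to $s=2$, coincides exactly with the paper's inductive coefficient bound, so the two methods are arithmetically equivalent there; what you lose is the elegance of the one-line reduction to $s=2$, and what you gain is an explicit combinatorial description of the coefficients that makes the source of the slack (the Gaussian binomial estimate) transparent. Your remark that the naive bound $\binom{R}{m}_{1/2}\le\binom{R}{m}$ is too weak is also correct, e.g.\ $\binom{4}{2}=6>4=2^{4-2}$, so the $q$-analogue is genuinely needed. One small remark on presentation: a full write-up should spell out the bijection between $m$-subsets of $\{s,\ldots,r-1\}$ and partitions in an $m\times(R-m)$ box that underlies the identification with $\binom{R}{m}_{1/2}$, and should carry out the algebra leading to $(s-2)(s-7+2k)\ge 0$, which is more bookkeeping than in the paper's route.
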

\begin{proof}
  Since $a_s^{(2)}\ge 2^s a_{s+1}^{(2)} \ge 4a_{s+1}^{(2)}$, it is enough
  to prove that $a_2^{(2)}\le \delta(G,\tau)$. Now by dint of the
  definition it is clear that $a_s^{(j)}$ is a linear combination of the
  numbers $\delta_{j+\ell}$, $\ell\ge0$. We claim that the coefficient of
  $\delta_{j+\ell}$ in $a_s^{(j)}$ is at most
  $2^{\binom{r}{2}-\binom{s+\ell}{2}+\ell}$. This is certainly true if
  $s=r$, since the only positive coefficient is that of $\delta_j$
  (i.e.~$\ell=0$). For $s<r$ we may prove the claim on the assumption that
  it is true for $s+1$. If $\ell=0$ then the coefficient of
  $\delta_{j+\ell}$ in $a_{s+1}^{(j+1)}$ is zero, and the claim follows
  because
  $2^{\binom{r}{2}-\binom{s}{2}}=2^s2^{\binom{r}{2}
    -\binom{s+1}{2}}$.
  If $\ell\ge1$ we have
$$
2^s\,
2^{\binom{r}{2}-\binom{s+1+\ell}{2}+\ell} +
2^{\binom{r}{2}-\binom{s+\ell}{2}+\ell-1}
= 2^{\binom{r}{2}-\binom{s+\ell}{2}+\ell}\left[2^{-\ell}+2^{-1}\right]\le
2^{\binom{r}{2}-\binom{s+\ell}{2}+\ell}
$$
and the claim follows in this case too. Hence the claim always holds, and 
so
$$
a_2^{(2)}\,\le\,
2^{\binom{r}{2}}\sum_{\ell=0}^{r-2}2^{-\binom{\ell+2}{2}+\ell}
  \delta_{2+\ell}\,=\,2^{\binom{r}{2}-1}\sum_{j=2}^r
  2^{-\binom{j-1}{2}}\delta_j
\,=\,\delta(G,\tau)\,,
$$
by definition of $\delta(G,\tau)$.
\end{proof}

Here is the main lemma about degrees in~$P_s$, and it shows the role of the
co-degree function $\delta(G,\tau)$ in the analysis of the algorithm. As
explained in~\S\ref{sec:online}, we would ideally like $\sum_{u\in
  U}d_s(u)\approx \tau^{r-s}\mu(U)nd$ for each subset $U\subset[n]$. The
lemma shows that this holds as an upper bound, with a small error expressed
in terms of $\delta(G,\tau)$.

\begin{lem}\label{lem:maxdeg}
  Let $G$ be an $r$-graph on vertex set $[n]$ with average degree~$d$. Let
  $P_r=G$ and let $P_{r-1},\ldots,P_1$ be the multigraphs constructed by
  some run of the algorithm, either in build mode or in prune mode. Then
$$
\sum_{u\in U}d_s(u)\,\le\,(\mu(U)+4^{1-s}\delta(G,\tau))\, \tau^{r-s}\,nd
$$
holds for all subsets $U\subset[n]$ and for $1\le s\le r$.
\end{lem}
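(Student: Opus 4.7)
The plan is to establish the lemma by downward induction on $s$ from $s=r$ to $s=1$, simultaneously proving a strengthened co-degree bound. Writing $d_s^{(j)}(u):=\max\{d_s(\sigma):u\in\sigma,\,|\sigma|=j\}$ (the obvious extension of $d^{(j)}(u)$ from Definition~\ref{defn:delta} to the multigraph $P_s$), the inductive statement at level $s$ reads
\[
\sum_u d_s^{(j)}(u)\;\le\;\tau^{r-s+j-1}\,nd\cdot a_s^{(j)}\qquad\text{for each } j\ge2,
\]
with $a_s^{(j)}$ as in Lemma~\ref{lem:asj}. The base case $s=r$ is immediate, since $P_r=G$ gives $\sum_u d^{(j)}(u)=\delta_j\tau^{j-1}nd=\tau^{j-1}nd\cdot a_r^{(j)}$; the $j=1$ statement at $s=r$ is also trivial, from $\sum_{u\in U}d(u)=nd\,\mu(U)$.

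The key observation, which rules out a troublesome boundary case, is that each edge added to $P_s$ at step $v$ is an $s$-subset $f\in F\subset[v+1,n]^{(s)}$ and therefore does not contain $v$. Hence, for any $\sigma\subset[n]$, the degree $d_s(\sigma)$ grows only at steps $v<\min\sigma$; if $\sigma$ eventually lies in $\Gamma_s$, it is added at some step $v_\sigma<\min\sigma$, after which the constraint ``$\sigma\not\subset f$'' freezes $d_s(\sigma)$ permanently.

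For the inductive step at $j\ge2$, fix $u$ and pick $\sigma^*\ni u$ of size $j$ attaining $d_s^{(j)}(u)=d_s(\sigma^*)$. Before $\sigma^*$ enters $\Gamma_s$ (or at all times if it never does), the $\Gamma_s$ threshold yields $d_s(\sigma^*)\le 2^s\tau\,d_{s+1}(\sigma^*)\le 2^s\tau\,d_{s+1}^{(j)}(u)$. The single-step overshoot at $v_{\sigma^*}$ consists of those $f\in F_{v_{\sigma^*}}$ with $\sigma^*\subset f$, each arising from an edge $\{v_{\sigma^*}\}\cup f\in E(P_{s+1})$ containing the $(j{+}1)$-set $\{v_{\sigma^*}\}\cup\sigma^*\ni u$, so the overshoot is at most $d_{s+1}^{(j+1)}(u)$. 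Combining gives the per-vertex bound
\[
d_s^{(j)}(u)\;\le\;2^s\tau\,d_{s+1}^{(j)}(u)+d_{s+1}^{(j+1)}(u);
\]
summing over $u$ and invoking the inductive hypothesis yields $\tau^{r-s+j-1}nd\bigl(2^s a_{s+1}^{(j)}+a_{s+1}^{(j+1)}\bigr)=\tau^{r-s+j-1}nd\cdot a_s^{(j)}$, precisely matching the recursion defining $a_s^{(j)}$.

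For $j=1$, the parallel argument uses the singleton threshold $\tau^{r-s}d(u)$, with the single-step overshoot bounded by $d_{s+1}(\{v_u,u\})\le d_{s+1}^{(2)}(u)$, giving $d_s(u)\le\tau^{r-s}d(u)+d_{s+1}^{(2)}(u)$ for every $u$. Summing over $u\in U$, passing to the full sum on the overshoot term, applying the strengthened claim at level $s+1$ with $j=2$, and finally invoking Lemma~\ref{lem:asj}'s bound $a_{s+1}^{(2)}\le 4^{1-s}\delta(G,\tau)$, delivers the stated $\sum_{u\in U}d_s(u)\le\tau^{r-s}nd\bigl(\mu(U)+4^{1-s}\delta(G,\tau)\bigr)$. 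The main obstacle is the careful bookkeeping across scales: verifying that every overshoot is genuinely a single-step event, and recognising that the pairing of the threshold level ($2^s\tau\,d_{s+1}^{(j)}$) with the $(j{+}1)$-set overshoot faithfully reproduces the recursion $a_s^{(j)}=2^s a_{s+1}^{(j)}+a_{s+1}^{(j+1)}$.
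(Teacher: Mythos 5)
Your proof is correct and follows essentially the same route as the paper: establish the per-vertex single-step bound $d_s^{(j)}(u)\le 2^s\tau\,d_{s+1}^{(j)}(u)+d_{s+1}^{(j+1)}(u)$ (and its $j=1$ analogue), sum, and propagate through the recursion $a_s^{(j)}=2^s a_{s+1}^{(j)}+a_{s+1}^{(j+1)}$ together with Lemma~\ref{lem:asj}. Your explicit observation that $F\subset[v+1,n]^{(s)}$ forces $d_s(\sigma)$ to grow only at steps $v<\min\sigma$ --- so that $\sigma$ is indeed re-examined at every step where its degree changes and is frozen once it enters $\Gamma_s$ --- nails down a point the paper's proof leaves implicit, but the argument and bookkeeping are otherwise identical.
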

\begin{proof} Recall that, as the algorithm proceeds, an element enters the
  set $\Gamma_s$ when its degree exceeds some threshold: for a vertex, when
  $d_s(u)> \tau^{r-s}d(u)$, and for a larger set when $d_s(\sigma)>
  2^s\tau d_{s+1}(\sigma)$. Let $u\in U$. If $u\notin\Gamma_s$ then $d_s(u)\le
  \tau^{r-s}d(u)$. If $u\in\Gamma_s$ then $u$ was added to $\Gamma_s$ after
  some other vertex $v$ was inspected and some multiset $F$ was added to
  $E(P_s)$, raising $d_s(u)$ beyond $\tau^{r-s}d(u)$. After $u$ was added
  to $\Gamma_s$, $d_s(u)$ did not change. Hence $\sum_{u\in
    U}d_s(u)\le\tau^{r-s}\sum_{u\in U}d(u)$ plus the extra contribution from the
  multisets~$F$. It is these extra contributions that we must now examine
  and bound in
  terms of $\delta(G,\tau)$. To do this, we must consider all the elements
  $\sigma\in\Gamma_s$, not just the vertices. Each of these enters
  $\Gamma_s$ when its degree exceeds its threshold by a little extra. These
  extras percolate down to form the extra for the vertex~$u$, in a way that
  Lemma~\ref{lem:asj} is designed to capture.

Let us do the calculation. By analogy with Definition~\ref{defn:delta} we define
$$
d_s^{(j)}(u)\,=\,\max\,\{\,d_s(\sigma)\,:\,u\in\sigma\in[n]^{(j)}\,\}\,,
$$
for $j\ge2$, where here it is the final values of these quantities that are
used --- that is, we measure these quantities in the output
multigraph~$P_s$.  

When $s=r$ the lemma is true by definition of $\mu(U)$, so from now on we
assume $s\le r-1$.
Let $u\in[n]$; then $d_s^{(j)}(u)=d_s(\sigma)$ for some
$\sigma\in[n]^{(j)}$ with $u\in\sigma$. If
$\sigma\notin\Gamma_s$ then $d_s(\sigma)\le 2^s\tau d_{s+1}(\sigma)$. If
$\sigma\in\Gamma_s$ then $\sigma$ was added to $\Gamma_s$ after some vertex
$v\notin\sigma$ was inspected and $F$ was added to $E(P_s)$. Before this took
place, $d_s(\sigma)\le2^s\tau d_{s+1}(\sigma)$ held; since the number of
edges of $F$ containing $\sigma$ was at most $d_{s+1}(\sigma\cup\{v\})$, we
have, in both cases,
\begin{equation}
d_s^{(j)}(u) = d_s(\sigma) \le2^s\tau d_{s+1}(\sigma) +
d_{s+1}(\sigma\cup\{v\})\le 2^s \tau d_{s+1}^{(j)}(u)+d_{s+1}^{(j+1)}(u)\,.
\label{eqn:dsj}
\end{equation}
We claim that
$$
\sum_{u\in[n]} d_s^{(j)}(u)\le a_s^{(j)}\tau^{r-s+j-1}nd\,,
$$
where $a_s^{(j)}$ was defined in Lemma~\ref{lem:asj}. Indeed, for $s=r$
the claim (with equality) is just the definition of $\delta_j$, and for
$s\le r-1$ it follows immediately by induction (on $r-s$) from
inequality~(\ref{eqn:dsj}) and the definition of $a_s^{(j)}$. Hence, for
$s\ge1$, we have by Lemma~\ref{lem:asj}
\begin{equation}
\sum_{u\in[n]} d_{s+1}^{(2)}(u)\le
4^{1-s}\tau^{r-s}nd\,\delta(G,\tau)\,.
\label{eqn:ds2}
\end{equation}

Now let $u\in U$. As mentioned before, either $d_s(u)\le \tau^{r-s}d(u)$
or $u$ was added to $\Gamma_s$ after some vertex $v$ was inspected
and $F$ was added to $E(P_s)$. Since $F$ has at most $d_{s+1}(\{u,v\})$
edges containing~$u$, the degree of $u$ in $P_s$ is at most
$\tau^{r-s}d(u)+d_{s+1}(\{u,v\})$. Now $d_{s+1}(\{u,v\})\le
d_{s+1}^{(2)}(u)$ so, using~(\ref{eqn:ds2}), we have
$$
\sum_{u\in U}d_s(u)\,\le\, \sum_{u\in
  U}\left(\tau^{r-s}d(u)+d_{s+1}^{(2)}(u)\right)
\,\le\,
\tau^{r-s}\mu(U)nd+4^{1-s}\tau^{r-s}nd\,\delta(G,\tau)\,,
$$
which establishes the lemma.
\end{proof}

\subsection{The measure of the sets $T_s$}\label{subsec:muT}

We now estimate the measures of the sets $T_s$. Ideally, they would have
degree measure at most~$\tau$, or, more exactly, $\tau/\zeta$. In fact such
a bound does hold with a small error determined by $\delta(G,\tau)$.

\begin{lem}\label{lem:muTs}
Let $I\subset[n]$ and $T=T(G,I,\tau,\zeta)=(T_{r-1},\ldots,T_1,T_0)$. Then
$\mu(T_s)\le (\tau/\zeta)(1+\delta(G,\tau))$ for $1\le s\le r-1$.
\end{lem}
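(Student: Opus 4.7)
\textbf{Plan for Lemma~\ref{lem:muTs}.} The natural approach is a double count of the edges of $P_s$: the membership rule for $T_s$ gives a lower bound on $|E(P_s)|$ in terms of $\mu(T_s)$, while Lemma~\ref{lem:maxdeg} gives an upper bound.

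First I would use the prune-mode rule to produce the lower bound. A vertex $v$ is placed in $T_s$ only when the multiset $F$ computed at step $v$ of the algorithm satisfies $|F| \ge \zeta\tau^{r-s-1}d(v)$, and at precisely that moment the whole of $F$ is appended to $E(P_s)$. The edges contributed at step $v$ come from edges of $P_{s+1}$ whose smallest vertex is $v$, and distinct edges of $P_{s+1}$ give rise to distinct entries in the multiset $E(P_s)$; so the contributions from different $v \in T_s$ are disjoint parts of the multiset and
$$|E(P_s)| \;\ge\; \sum_{v\in T_s}\zeta\tau^{r-s-1}d(v) \;=\; \zeta\tau^{r-s-1}\,nd\,\mu(T_s).$$

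For the matching upper bound I would use $s\,|E(P_s)| = \sum_{u\in[n]} d_s(u)$, valid because $P_s$ is $s$-uniform (counted with multiplicities). Applying Lemma~\ref{lem:maxdeg} with $U=[n]$, so $\mu(U)=1$, gives
$$|E(P_s)| \;\le\; \frac{\tau^{r-s}\,nd\,(1 + 4^{1-s}\delta(G,\tau))}{s}.$$
Combining the two bounds and using $s \ge 1$ and $4^{1-s} \le 1$ yields
$$\mu(T_s) \;\le\; \frac{\tau\,(1 + 4^{1-s}\delta(G,\tau))}{\zeta\,s} \;\le\; \frac{\tau\,(1 + \delta(G,\tau))}{\zeta},$$
as desired.

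There is essentially no obstacle; this is a clean double count. The only point that needs a moment's care is the multiset bookkeeping in the lower bound: one must check that the $|F(v)|$ edges added at step $v$ are genuinely new entries of the multiset $E(P_s)$ (not duplicates of edges added at earlier steps), so that summing over $v \in T_s$ really lower-bounds $|E(P_s)|$. This is immediate from the algorithm, since each element of $F(v)$ corresponds to a distinct edge of $E(P_{s+1})$ whose smallest vertex is $v$.
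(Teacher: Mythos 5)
Your proof is correct and follows essentially the same double-counting argument as the paper: lower-bound $e(P_s)$ by the membership rule for $T_s$, upper-bound it via Lemma~\ref{lem:maxdeg} applied with $U=[n]$, and compare. The only cosmetic difference is that you use the exact identity $s\,e(P_s)=\sum_u d_s(u)$ and then drop the factor $s\ge 1$, whereas the paper writes $e(P_s)\le\sum_u d_s(u)$ directly; both give the stated bound.
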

\begin{proof}
  The set $T_s$ is output when the algorithm is run in prune mode. During
  the run of the algorithm, each vertex $v$ which enters $T_s$ contributes
  a set $F$ of at least $\zeta \tau^{r-s-1}d(v)$ edges to
  $E(P_s)$. But the total size of $E(P_s)$ is limited, because the degrees
  in $P_s$ are constrained. Writing $d$ for the average degree of~$G$, 
  Lemma~\ref{lem:maxdeg} yields
\begin{align*}
 \zeta\tau^{r-s-1}\mu(T_s)nd=\sum_{v\in T_s}\zeta\tau^{r-s-1}d(v)
\le e(P_s)\le\sum_{u\in[n]}d_s(u)\\\le
\tau^{r-s}nd(1+4^{1-s}\delta(G,\tau))
\end{align*}
and this proves the lemma.
\end{proof}

The set $T_0$ needs a different argument. As noted before, $T_0=\emptyset$
if $I$ is independent. 

\begin{lem}\label{lem:muT0}
  Let $G$ be an $r$-graph on vertex set $[n]$ with average
  degree~$d$. Let $I\subset[n]$ and
  $T=T(G,I,\tau,\zeta)=(T_{r-1},\ldots,T_1,T_0)$. If $e(G[I])\le bn$ where 
  $b\le2\tau^rd/ \zeta$, then $\mu(T_0)\le 2\tau/\zeta$. If $G[I]$
  is $b$-degenerate where $b\le \zeta\tau^{r-1}d /r $,  then $\mu(T_0)\le
  (\tau/\zeta)(1+\delta(G,\tau))$.
\end{lem}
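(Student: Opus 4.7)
The plan is to follow the template of the proof of Lemma~\ref{lem:muTs}. The key structural observation is that each copy of $\{u\}\in E(P_1)$ is in bijection with a distinct edge $e=\{v_{r-1},\ldots,v_1,u\}\in E(G)$ in which $u$ is the largest vertex and each $v_j\in T_j\subset I$; in particular $e$ lies inside $G[I]$. (Distinct $G$-edges give distinct multiset members of $E(P_1)$ by the multigraph construction.) This reduces bounding $\sum d_1(u)$ to counting certain edges of $G[I]$.

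For the first assertion I would sum the defining inequality $d_1(u)>\tau^{r-1}d(u)$ (which holds for every $u\in T_0=I\cap\Gamma_1$) over $T_0$:
$$
\tau^{r-1}\sum_{u\in T_0}d(u)\,<\,\sum_{u\in T_0}d_1(u)\,\le\,e(G[I])\,\le\,bn,
$$
and divide through by $\tau^{r-1}nd$ to obtain $\mu(T_0)\le\tau^{1-r}d^{-1}b$.

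For the second assertion I would run the same argument but restricted to $u\in T_0\cap[m]$. Since $u\le m$ is the largest vertex of every contributing $G$-edge, all $r$ vertices of that edge lie in $I\cap[m]$. The subgraph $G[I\cap[m]]$ inherits $b$-degeneracy from $G[I]$, so $e(G[I\cap[m]])\le b|I\cap[m]|$, and combining with $d_1(u)>\tau^{r-1}d(u)$ yields
$$
\tau^{r-1}\sum_{u\in T_0\cap[m]}d(u)\,<\,\sum_{u\in T_0\cap[m]}d_1(u)\,\le\,b|I\cap[m]|.
$$

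The hard part is extracting from this the sharp bound $(\tau/\zeta)(1+\delta(G,\tau))$. Using only $|I\cap[m]|\le m$ together with $d(v)\ge\tau^{1-r}rb$ for $v\in[m]$ (which, since the degree sequence is decreasing and $\sum_v d(v)=nd$, forces $m\le nd\tau^{r-1}/(rb)$) yields merely $\mu(T_0\cap[m])<1/r$, which is too weak. I expect the factor $\tau/\zeta$ to emerge by regrouping $\sum_{u\in T_0\cap[m]}d_1(u)$ according to the $T_1$-vertex that triggered each $u$'s entry into $\Gamma_1$: for every such $u$, the excess $d_1(u)-\tau^{r-1}d(u)$ is bounded by the jump $d_2(\{v^*_u,u\})\le d_2^{(2)}(u)$, whose total is controlled by $\sum_u d_2^{(2)}(u)\le\delta(G,\tau)\tau^{r-1}nd$ from the recursion of Lemma~\ref{lem:asj}. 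Dually, one can bound $\sum_{u\in T_0\cap[m]}d_1(u)\le\sum_{v\in T_1\cap[m]}d_2(v)\le\tau^{r-2}nd(\mu(T_1)+\delta(G,\tau)/4)$ via Lemma~\ref{lem:maxdeg} at $s=2$, and then feed in $\mu(T_1)\le(\tau/\zeta)(1+\delta(G,\tau))$ from Lemma~\ref{lem:muTs}. Balancing these two upper bounds against each other, and against the degeneracy estimate $b|I\cap[m]|$, should yield the claimed $(\tau/\zeta)(1+\delta(G,\tau))$. This coupling of the $b$-degeneracy edge-count with the propagation calculus of $\Gamma_1$ and $T_1$ is, I expect, the technical crux of the argument.
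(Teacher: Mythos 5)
Your first assertion is handled correctly and matches the paper exactly: sum $d_1(u)>\tau^{r-1}d(u)$ over $u\in T_0$, and bound the total by $e(G[I])\le bn$ using the bijection between $E(P_1)$ and $G$-edges inside~$I$.

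The second assertion is where the gap lies. You correctly observe that bounding via $|I\cap[m]|\le m$ only gives $\mu(T_0\cap[m])<1/r$, which is far too weak, but the alternative routes you then sketch do not close the gap. Your ``dual'' bound via $\sum_{u}d_1(u)\le\sum_{v\in T_1}d_2(v)\le\tau^{r-2}nd(\mu(T_1)+\delta/4)$ combined with $\tau^{r-1}nd\mu(T_0^*)<\sum d_1(u)$ yields only $\mu(T_0^*)\le\tau^{-1}(\mu(T_1)+\delta/4)$, which after substituting $\mu(T_1)\le(\tau/\zeta)(1+\delta)$ gives $\mu(T_0^*)\lesssim\zeta^{-1}(1+\delta)+\delta/(4\tau)$ --- off by a factor of~$\tau$ from the target and therefore useless for small~$\tau$. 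Your ``jump'' bound $d_1(u)\le\tau^{r-1}d(u)+d_2^{(2)}(u)$, once summed over $T_0^*$ and combined with the defining inequality $d_1(u)>\tau^{r-1}d(u)$, collapses to a tautology and yields no information.

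What the paper actually does is sharper and more elementary than either of your speculations. The observation you needed is that the contributing $G$-edges $\{v_{r-1},\ldots,v_1,u\}$ with $u\in T_0\cap[m]$ lie not merely in $I\cap[m]$ but inside the much smaller set $J^*=(T_{r-1}\cup\cdots\cup T_0)\cap[m]$, since each $v_j\in T_j$ and all $r$ vertices are $<u\le m$. Applying $b$-degeneracy of $G[J^*]$ gives $\tau^{r-1}nd\,\mu(T_0^*)\le e(G[J^*])\le b|J^*|$, and then the hypothesis $d(v)\ge\tau^{1-r}rb$ on $[m]$ converts cardinality to degree measure: $|J^*|\le\frac{1}{\tau^{1-r}rb}\sum_{u\in J^*}d(u)=\frac{\tau^{r-1}nd}{rb}\mu(J^*)$. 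Combining yields the self-referential inequality $r\mu(T_0^*)\le\mu(J^*)\le\sum_{s=0}^{r-1}\mu(T_s^*)$; subtracting $\mu(T_0^*)$ and applying Lemma~\ref{lem:muTs} to the remaining $r-1$ terms gives $(r-1)\mu(T_0^*)\le(r-1)(\tau/\zeta)(1+\delta(G,\tau))$, which is exactly the claim. It is this bootstrap --- $T_0^*$ sitting inside $J^*$, which in turn is controlled partly by $T_0^*$ itself --- that you did not find, and it is the crux of the lemma.
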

\begin{proof}
  Recall that $T_0=I\cap (\Gamma_1\setminus B)$. So, for each $v\in T_0$,
  $d_1(v)>\tau^{r-1}d(v)$ holds because $v\in\Gamma_1$. Here the degree
  $d_1(v)$ is in the multigraph $P_1$. Let $J=T_{r-1}\cup\cdots\cup T_1\cup
  T_0\subset I$. Recall that distinct $1$-edges $\{v\}$ in the multigraph
  $P_1$ correspond to distinct $r$-edges $\{v_{r-1}, \ldots, v_1, v \}
  \subset I$ with $v_{r-1}<\cdots< v_1<v$ and $v_s \in T_s$, so these edges
  lie in~$G[J]$. It follows that $\tau^{r-1}\mu(T_0)nd\le\sum_{v\in T_0}
  d_1(v)\le e(G[J])$.

  If $e(G[I])\le bn$ then $\tau^{r-1}\mu(T_0)nd\le e(G[J])
  \le e(G[I])\le bn\le 2\tau^rdn/\zeta $, so $\mu(T_0)\le 2\tau/\zeta$.

  If $G[I]$ is $b$-degenerate, then $e(G[J])\le b|J|$. Recall from the
  construction of the sets $T_s$, $s\ge 1$, in prune mode that $d(v)\ge
  \zeta d$ for $v\in T_s$. For $v\in T_0$ we have $d(v)\ge
  \zeta d$ because $v\notin B$. Thus $d(v)\ge\zeta d$ for all $v\in J$. So
  $$
  \tau^{r-1}\mu(T_0)nd\le e(G[J])\le b|J|\,\le\,\frac{b}{\zeta d}\sum_{v\in
    J}d(v)\,= \,\frac{b}{\zeta d}\mu(J)nd\,\le \,\frac{\tau^{r-1}}{r}\mu(J)nd
  \,.
  $$
  Therefore $r\mu(T_0)\le\mu(J)\le\mu(T_{r-1})+\cdots+\mu(T_0)$,
  and so $(r-1)\mu(T_0)\le \mu(T_{r-1})+\cdots+\mu(T_1) \le
  (r-1)(\tau/\zeta)(1+\delta(G,\tau))$ by
  Lemma~\ref{lem:muTs}.
\end{proof}

\subsection{The measure of the container
  $C(G,T,\tau,\zeta)$}\label{subsec:container}

We now prove the crucial fact that the measure of the container
$C(G,T,\tau,\zeta)$ is bounded above by some constant less than one. This
can be established with a fairly simple argument, but just a
little more care yields a bound close to $1-1/r!$, which is best
possible, as described in~\S\ref{subsec:optimality}.

It is in order to achieve this bound that the number $2^s$
appears in the algorithm, in the condition $d_s(\sigma)>2^s\tau
d_{s+1}(\sigma)$ for entry of $\sigma$ into~$\Gamma_s$. Hence it is that
powers of~2 appear in the definition of $\delta(G,\tau)$, having
permeated there via Lemma~\ref{lem:maxdeg}. The condition can be relaxed
to $d_s(\sigma)>k\tau d_{s+1}(\sigma)$ for some smaller value of $k$, 
with some slight reduction in the constants in the definition of
$\delta(G,\tau)$, but at the expense of a weaker bound on
$\mu(C(G,T,\tau,\zeta))$. See~\S\ref{sec:postscript} for further comment.

\begin{lem}\label{lem:mucont} Let
  $T=(T_{r-1},\ldots,T_0)\in\mathcal{P}([n])^r$. Then 
$$
\mu(C(G,T,\tau,\zeta))\,\le\, 1 - \frac{1}{r!} + \frac{15}{4}\zeta +
\frac{1}{4}\delta(G,\tau)+ \sum_{s=0}^{r-1}\mu(T_s)\,.
$$
\end{lem}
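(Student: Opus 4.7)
Since $C(G,T,\tau,\zeta) = (C_0 \cap C_1 \cap \cdots \cap C_{r-1}) \cup T_0 \cup \cdots \cup T_{r-1}$ by definition, a union bound on measure gives $\mu(C) \le \mu(M) + \sum_{s=0}^{r-1} \mu(T_s)$, where $M := C_0 \cap C_1 \cap \cdots \cap C_{r-1}$.  The heart of the proof is therefore to show
\[
  \mu(M) \,\le\, 1 - \frac{1}{r!} + \frac{11}{4}\zeta + \frac{1}{4}\delta(G,\tau),
\]
which is where the interesting content of the lemma lies.

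I would estimate $nd\,\mu(M) = \sum_{v\in M}d(v) = \sum_{e\in E(G)}|e\cap M|$ by attaching, to each edge $e=\{v_{r-1}<\cdots<v_0\}\in E(G)$ and each position $s$, a suitable ``cost'' that is controlled by the algorithm.  The two constraints that $v\in M$ supplies are: $v\in C_0$ forces $d_1(v)\le\tau^{r-1}d(v)$ (bounding the number of fully-surviving chains ending at~$v$), while $v\in C_s$ for $s\ge 1$ forces $|F_v^{(s)}|<\zeta\tau^{r-s-1}d(v)$ (bounding the number of $P_{s+1}$-edges having $v$ as smallest vertex with tail avoiding $\Gamma_s$).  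Summing these bounds over $v\in M$ and using $e(P_s)=\sum_{v\in T_s}|F_v^{(s)}|$ together with the identities $\sum_v|F_v^{(s)}|\le e(P_{s+1})$ converts the pointwise constraints into the edge language.

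The central technical difficulty is that $|F_v^{(s)}|$ only controls a \emph{restricted} count (edges whose head and chain of top vertices have been admitted into the $T_j$ and whose tail avoids~$\Gamma_s$), whereas $|e\cap M|$ needs an \emph{unrestricted} accounting.  This is where Lemma~\ref{lem:maxdeg} enters: applied at each level $s$ with $U = [n]$ it bounds $\sum_u d_s(u)\le \tau^{r-s}nd\bigl(1+4^{1-s}\delta(G,\tau)\bigr)$, and so converts the residual $\Gamma_s$-failures into a correction proportional to $\delta(G,\tau)$.  The coefficient $1/4$ on $\delta(G,\tau)$ is precisely $4^{1-s}$ at $s=2$, the level at which this correction is dominant.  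Summing the $\zeta\tau^{r-s-1}$-contributions from $s=1,\ldots,r-1$ as a geometric series in $\tau$, plus small slack needed to combine the various inequalities, yields the constant $11/4$ on $\zeta$.

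The $1-1/r!$ term is the combinatorially deepest piece: it reflects that for each edge~$e$ the algorithm treats the vertices in one fixed natural order out of the $r!$ possible orderings.  Concretely, summing the $|F_v^{(s)}|$ bounds across all positions $s$ yields a bound on the weighted count $\sum_{e,s}[v_s(e)\in M]$ in which exactly one of the $r!$ orderings of each edge is ``charged'', giving the factor $1/r!$ after rearranging; tracking this carefully is what I expect to be the main obstacle, requiring delicate bookkeeping to ensure the resulting constants match.  Once $\mu(M)$ is bounded in this way, adding $\sum_s\mu(T_s)$ completes the proof.
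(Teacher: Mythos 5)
Your high-level plan (split off $\sum_s\mu(T_s)$, then bound $\mu(M)$ where $M=C_0\cap\cdots\cap C_{r-1}$) matches the paper, but the central mechanism you sketch for extracting the $1-1/r!$ bound is not the one that works, and I don't believe the ``charging orderings'' picture can be made to close.

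Concretely, you propose to bound $nd\,\mu(M)=\sum_{e\in E(G)}|e\cap M|$ by attaching per-edge, per-position costs and arguing that ``exactly one of the $r!$ orderings of each edge is charged''.  The paper's $1/r!$ does \emph{not} arise this way.  It arises from a recursion on a \emph{partition} of the vertex set.  Setting $D_1=[n]\setminus M$, $D_s=\{v:\{v\}\in\Gamma_s\}\setminus D_{<s}$ for $2\le s\le r-1$, and $D_r$ the remainder, one shows for each $s\ge2$ that
\[
\mu(D_{\le s})\ \le\ s\,\mu(D_{\le s-1})\ -\ 2x_s\ +\ s\,y_s\ +\ (s-1)4^{1-s}\delta(G,\tau)\,,
\]
where $x_s,y_s$ measure certain subclasses of $E(P_s)$.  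The factor $s$ (not an edge-ordering count) drives the whole argument: multiplying by $1/s!$ and telescoping from $s=2$ to $s=r$, with $\mu(D_{\le r})=1$, produces the $1/r!$.  Nothing in your sketch identifies this partition, the role of $\{v\}\in\Gamma_s$ as a degree lower bound $d_s(v)\ge\tau^{r-s}d(v)$ on $D_s$, or the $1/s!$-weighted telescope.  Without that, the ``delicate bookkeeping'' you defer is not bookkeeping but the entire proof.

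Your attributions of the error constants are also off, which is symptomatic of the missing structure.  The coefficient $1/4$ on $\delta(G,\tau)$ is not ``$4^{1-s}$ at $s=2$''; it is $\sum_{s\ge2}4^{1-s}(s-1)/s!<1/4$, the full telescoped sum.  Similarly $11/4$ is not a geometric series in $\tau$; it comes from eliminating $y_s$ via the chain $y_s\le z_s+\zeta$, $z_s\le x_{s-1}+y_{s-1}$, which cascades to $y_s\le x_{s-1}+\cdots+x_3+(s-1)\zeta$ and finally to the bound $\zeta\sum_{s\ge2}1/(s-2)!<11\zeta/4$, with the $x_s$-terms absorbed because their net coefficient $-2/s!+\sum_{j=s}^{r-1}1/j!$ is nonpositive.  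Your proposal does not introduce the auxiliary edge classes $X_s,Y_s,Z_s$ or the cascading estimate on $y_s$, so it has no mechanism to cancel the $x_s$ contributions or to keep the $\zeta$-dependence linear rather than accumulating a factor of $r$.  In short, the decomposition-and-telescope idea is the actual content of the lemma and is absent from the proposal.
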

\begin{proof}
  Recall from Definition~\ref{def:container} that
  $C_0=[n]-(\Gamma_1\setminus B)$, where $B=\{v\in[n]:d(v)<\zeta d\}$ and
  $d$ is the average degree of~$G$. Recall too that $C_{r-1},\ldots,C_1$
  are constructed by the algorithm in build mode. Let
  $C=C_{r-1}\cap\cdots\cap C_0$.  We define $D_1= [n]- (C\setminus B)$. Now
  $\mu(C)\le 1-\mu(D_1)+\mu(B)$ and $\mu(B)<\zeta$. By
  Definition~\ref{def:container}, $C(G,T,\tau,\zeta)=C\cup
  T_{r-1}\cup\cdots\cup T_0$. So to prove the lemma it is enough to prove that
  $\mu(D_1)\ge 1/r!-11\zeta/4-\delta(G,\tau)/4$.
  
  We first outline the argument, before filling in the details.
  We have $C_0=[n]-(\Gamma_1\setminus B)$, so
  $(C_0\setminus B)\cap \Gamma_1=\emptyset$. Since $C\subset C_0$, this
  means $(C\setminus B)\cap \Gamma_1=\emptyset$, so $\Gamma_1\subset
  [n]-(C\setminus B)=D_1$. We extend $D_1$ to a partition $D_1,\ldots,D_r$
  of~$[n]$ as follows:
\begin{align*}
  D_1 &= [n] \setminus  (C\setminus B)\qquad\mbox{where $\Gamma_1\subset
    D_1$ and $B\subset D_1$}\\
  D_2 &= \{v\in[n]: \{v\}\in \Gamma_2, v\notin D_1 \}\\
  D_3 &= \{v\in[n]: \{v\}\in \Gamma_3, v\notin (D_1 \cup D_2)\} \\
    &\ \vdots \\
  D_{r-1} &= \{v\in[n]: \{v\}\in \Gamma_{r-1}, v\notin  (D_1 \cup \cdots \cup
    D_{r-2})\} \\
  D_r &= [n] \setminus (D_1 \cup \cdots \cup D_{r-1})\,.
\end{align*}
We aim to bound $\mu(D_s)$ above, for $s\ge2$, in terms of
$\mu(D_{s-1}),\ldots,\mu(D_1)$. By induction, this means $\mu(D_s)$ is
bounded above in terms of $\mu(D_1)$, and, since $\mu([n])=1$ and
$D_1,\ldots,D_r$ partition~$[n]$, we obtain a lower bound on $\mu(D_1)$ as
desired. It is convenient to define $D_{<s}=D_1\cup\cdots\cup D_{s-1}$,
$D_{\le s}=D_s\cup D_{<s}$ and so on. Notice that
$C\setminus B=D_{\ge2}$.

To find the desired upper bound for $\mu(D_s)$ in terms of $\mu(D_{<s})$,
we look at edges of $P_s$. The point of the definition of $D_s$ is that if
$v\in D_s$ then $\{v\}\in\Gamma_s$ so $d_s(v)\ge \tau^{r-s}d(v)$; thus the
number edges of $P_s$ meeting $D_s$ can be bounded below in terms of
$\mu(D_s)$. 
As we shall explain, we expect very few edges of $P_s$ to lie inside $D_{\ge
  s}$, so nearly all edges
meeting $D_s$ meet $D_{<s}$ too, and since the number of edges meeting $D_{<s}$
is bounded above in terms of $\mu(D_{<s})$ by Lemma~\ref{lem:maxdeg}, we
are done. So the fundamental point of the proof is that $D_{\ge s}$ should
contain few edges of $P_s$. 

Define the following trio of subsets of the edges of~$P_s$, for each
$s\ge2$:
\begin{align*}
  X_s &= \{\,f\in E(P_s)\,:\, |f\cap D_{<s}|\ge1,\,\, |f\cap D_{>s}|\ge 2\,\}\\
  Y_s &= \{\,f\in E(P_s)\,:\, f\subset D_{\ge s}\,\}\\
  Z_s &= \{\,f\in Y_s\,:\,\sigma\subset f\mbox{ for some } \sigma\in
  \Gamma_{s-1}, |\sigma|\ge2\,\}\,. 
\end{align*}
Here, $Y_s$ is the set previously discussed of edges inside $D_{\ge   s}$;
if $Y_s$ is empty, or
small, then the above sketch proof works. We come to $X_s$ and $Z_s$
shortly.

Suppose first that $G$ is a simple $r$-graph. As noted
in~\S\ref{subsec:algorithm}, the sets $\sigma$ with $|\sigma|\ge2$ play no
role, and can be deleted from the algorithm; in particular $Z_s=\emptyset$.
Consider an edge in~$Y_s$. It has a first vertex~$v$, where $v\in D_{\ge
  s}$. Now $D_{\ge2}=C\setminus B\subset C=C_{r-1}\cap\cdots\cap C_0$, so in
particular $v\in C_{s-1}\setminus B$. By definition of $C_{s-1}\setminus
B$, all but $\zeta\tau^{r-s}d(v)$ edges of $P_s$ with first vertex~$v$ meet
$\Gamma_{s-1}$ and so meet $D_{<s}$, by definition of the~$D_i$ (note this
is true even if $s=2$). This means there are at most $\zeta\tau^{r-s}d(v)$
edges in $Y_s$ with first vertex~$v$, so $Y_s$ is indeed relatively tiny,
as desired.

If $G$ is not simple, then the argument of the previous paragraph yields
that $Y_s\setminus Z_s$ is tiny, so we need worry only if $Z_s$ is
large. But this would mean there are many $\sigma\in\Gamma_{s-1}$ with
$d_s(\sigma)$ large, which, by definition of $\Gamma_{s-1}$, implies
$d_{s-1}(\sigma)$ is large. This turns out to give rise to many edges
in $X_{s-1}$. However, we shall see below in~(\ref{eqn:muD}) that, unlike
edges in $Y_s$, edges in $X_s$ only {\em 
  improve} the original estimate for $\mu(D_s)$ in terms of
$\mu(D_{<s})$. So we trade off a loss in $\mu(D_s)$ caused by $Y_s$, that
is, by~$Z_s$, for a gain in $\mu(D_{s-1})$ caused by $X_{s-1}$. The
relative trade-off can be weighted in favour of $X_{s-1}$ by the $2^s$ term in
the definition of $\Gamma_s$ in the algorithm, and this is precisely the
reason for its appearance.

Now we can start the proof. We count edges in $P_s$ but take
into account both $X_s$ and~$Y_s$. We can take $d_s(v)\ge \tau^{r-s}d(v)$
for all $v\in D_s$: for $s<r$ this is because $\{v\}\in \Gamma_s$, and for
$s=r$ it holds trivially. Note that $X_s\cap Y_s=\emptyset$ and each member
of $E(P_s)\setminus Y_s$ meets $D_{<s}$. So, for $s\ge2$,
\begin{align}
\tau^{r-s}\mu(D_s)nd &\le\sum_{v\in D_s}d_s(v) \nonumber\\
&=\sum_{f\in E(P_s)} |f\cap D_s|\nonumber\\
&=\sum_{f\in E(P_s)\setminus(X_s\cup Y_s)} |f\cap D_s|\,+\,
\sum_{f\in X_s} |f\cap D_s|\,+\,\sum_{f\in Y_s} |f\cap D_s|\nonumber\\
&\le (s-1)|E(P_s)\setminus(X_s\cup Y_s)|+(s-3)|X_s|+s|Y_s|\nonumber\\
&= (s-1)|E(P_s)\setminus Y_s|-2|X_s|+s|Y_s|\nonumber\\
&\le (s-1)\sum_{v\in D_{<s}}d_s(v)-2|X_s|+s|Y_s|\nonumber\\
&\le
(s-1)\tau^{r-s}\,nd\,(\,\mu(D_{<s})+4^{1-s}\delta(G,\tau))-2|X_s|+s|Y_s|\,,
\label{eqn:muD}
\end{align}
where the last line employs Lemma~\ref{lem:maxdeg}. This is the bound on
$\mu(D_s)$ that we want.

For convenience, we further define the numbers $x_s,y_s,z_s$ by
$|X_s|=x_s\tau^{r-s}nd$, $|Y_s|=y_s\tau^{r-s}nd$ and
$|Z_s|=z_s\tau^{r-s}nd$. Observe that $X_2=\emptyset$ because edges in
$X_s$ have at least three vertices, and $Z_2=\emptyset$ because $\Gamma_1$
contains no $\sigma$ with $|\sigma|=2$. Thus we have the initial conditions
$x_2=z_2=0$.

Tidying up~(\ref{eqn:muD}), we obtain $\mu(D_s)\le
(s-1)(\mu(D_{<s})+4^{1-s}\delta(G,\tau))-2x_s+sy_s$. Adding
$\mu(D_{<s})=\mu(D_{\le s-1})$ to each side gives
$$
\mu(D_{\le s})\le s\mu(D_{\le s-1}) -2x_s+sy_s+(s-1)4^{1-s}\delta(G,\tau)
$$
for each $s\ge2$. Multiplying this inequality by $1/s!$ and summing over
$s=2,\ldots,r$, noting that $\mu(D_{\le r})=1$, $D_{\le 1}=D_1$ and $x_2=0$,
we obtain
\begin{equation}
\frac{1}{r!}\,\le\, \mu(D_1)-2\sum_{s\ge 3} \frac{x_s}{s!}+\sum_{s\ge
  2}\frac{y_s}{(s-1)!} + \frac{1}{4}\delta(G,\tau)\,,
\label{eqn:xy}
\end{equation}
where we used $\sum_{s\ge 2}4^{1-s}(s-1)/s! < 1/4$.

Let $s\ge2$ and let $f\in Y_s\setminus Z_s$. If $f$ contains a
subset $\sigma\in\Gamma_{s-1}$ then $|\sigma|=1$, say $\sigma=\{u\}$. But
$\{u\}\in \Gamma_{s-1}$ implies $u\in D_{<s}$ by definition of $D_{s-1}$
(even if $s=2$),
which contradicts $f\in Y_s$. Thus $f$ contains no member of
$\Gamma_{s-1}$. Let $v$ be the first vertex of~$f$. Now $v\in D_{\ge
  s}\subset D_{\ge 2}=C\setminus B$, so $v\in C_{s-1}\setminus B$. 
By the construction of~$C_{s-1}$,
$v$ is the first vertex of fewer than $\zeta\tau^{r-s}d(v)$ edges of $P_s$
that contain no member of~$\Gamma_{s-1}$, so it is the first vertex of
fewer than $\zeta\tau^{r-s}d(v)$ edges in $Y_s\setminus Z_s$.  Therefore
$|Y_s|-|Z_s|\le \sum_{v\in D_{\ge s}}\zeta\tau^{r-s}d(v)
=\zeta\tau^{r-s}\mu(D_{\ge s})nd$.  Hence $y_s-z_s \le \zeta \mu(D_{\ge
  s})\le \zeta$. In particular $y_2\le \zeta$, because $z_2=0$.

Let $s\ge3$ and put $S=\{\sigma\in \Gamma_{s-1}: |\sigma|\ge2,
\,\sigma\subset D_{\ge s}\}$.  By definition of $Z_s$, each member of $Z_s$
contains a member of~$S$. Let $F$ be the set of edges of $P_{s-1}$ that
contain a member of~$S$. Then each edge in $F$ contains at least two
vertices of~$D_{\ge s}$; therefore $F\subset X_{s-1}\cup Y_{s-1}$. Hence
\begin{align*}
z_s\tau^{r-s}nd \,= \,|Z_s|&\le \sum_{\sigma\in S} d_s(\sigma)\\
&\le\sum_{\sigma\in S}\frac{1}{\tau 2^{s-1}}d_{s-1}(\sigma)
\quad\mbox{by definition of }\Gamma_{s-1}\\
&= \frac{1}{\tau 2^{s-1}}\sum_{f\in F}
|\{\sigma\in S: \sigma\subset f\}|\\
&\le \frac{1}{\tau}\,|F|\quad\mbox{since $|f|=s-1$ for each $f\in F$}\\
&\le \frac{1}{\tau}\,|X_{s-1}\cup Y_{s-1}|
= (x_{s-1}+y_{s-1})\tau^{r-s}nd\,.
\end{align*}

Thus $z_s\le x_{s-1}+y_{s-1}$ for $s\ge3$. Since $y_s\le z_s+\zeta$ this means
$y_s\le x_{s-1}+y_{s-1}+\zeta$; by repeating and applying both $x_2=0$
and $y_2\le \zeta$, this yields $y_s\le x_{s-1}+x_{s-2}+\cdots+x_3+(s-1)\zeta$ for
$s\ge3$. The inequality holds for $s=2$ also. Substituting this
inequality into inequality~(\ref{eqn:xy}) we obtain
$$
\frac{1}{r!}\,\le\, \mu(D_1) + \sum_{s\ge 3} x_s\left(-\frac{2}{s!}
  +\sum_{j=s}^{r-1}\frac{1}{j!}\right) +
\zeta\sum_{s\ge2}\frac{1}{(s-2)!} +\frac{1}{4} \delta(G,\tau)\,.
$$
The coefficient of $x_s$ is negative, and so $1/r!\le \mu(D_1)+
11\zeta/4+\delta(G,\tau)/4$, which is what we needed to prove.
\end{proof}

\subsection{Proof of Theorem~\ref{thm:cover}}\label{subsec:mainproof}

As expected, our choices for $T$ and $C(T)$ in Theorem~\ref{thm:cover}
will usually be $T=T(G,I,\tau,\zeta)$ and $C(T)=C(G,T,\tau,\zeta)$.

\begin{proof}[Proof of Theorem~\ref{thm:cover}]
  Notice that the theorem is trivial if $\zeta\ge1/4r!$, since in that case
  the function $C(T)=[n]$ works, with $T=(\emptyset,\ldots,\emptyset)$
  representing all~$I$. Recall too from~\S\ref{subsec:codeg} that
  $\delta(G,\tau)\to0$ as $\tau\to\infty$. Hence the condition
  $\delta(G,\tau)\le\zeta$ is satisfiable by making $\tau$ large enough,
  although if $\tau\ge\zeta/2r$ the theorem is similarly trivial.

  In the remaining cases we take $T=T(G,I,\tau,\zeta)$ and
  $C(T)=C(G,T,\tau,\zeta)$. Then assertion~(a) of the theorem holds because
  of Lemma~\ref{lem:container}, and the online property holds for simple
  graphs because of Lemma~\ref{lem:online}.

  Let $d$ be the average degree of~$G$. By Lemmas~\ref{lem:muTs}
  and~\ref{lem:muT0}, assertion~(b) holds for sets
  $I$ for which $G[I]$ is $\lfloor\tau^{r-1} \zeta
  e(G)/n\rfloor$-degenerate, that is, $b$-degenerate with $b\le \zeta \tau^{r-1}
   d/r$, using the fact that $\delta(G,\tau)\le\zeta\le1$. Likewise, 
  (b)~holds for sets~$I$ for which $e(G[I])\le 2r \tau^re(G)/
   \zeta$, that is, $e(G[I])\le bn$ with $b\le2\tau^rd/ \zeta$. Either of
   these implies~(b) for independent sets~$I$, by taking $b=0$.

  For every $v\in T_s$ we have $d(v)\ge \zeta d$: for $s\ge 1$ this holds
  by the definition of the algorithm, and for $s=0$ it holds by the
  definition of $T_0$. Hence $|T_s|\le (1/ \zeta d)\sum_{v\in T_s}d(v)
  = (n/\zeta)\mu(T_s)$. Thus (c)~follows from~(b).
  
  Finally, property~(d) follows from Lemma~\ref{lem:mucont} and
  assertion~(b), so we are done.
\end{proof}

\section{Tight containers}\label{sec:iteration}

We turn now to the first of our packaged versions of the container theorem,
Corollary~\ref{cor:sparse_container}, which supplies containers with
$e(G[C])$ small. The way to obtain sparser containers by repeated
applications of the container theorem was discussed
in~\S\ref{subsec:tight}, and here we calculate what is achievable. Given an
independent set $I$ in the $r$-graph~$G$, we apply the container theorem to
obtain a container $C$ with $I\subset C$. We then apply the container
theorem again, this time to $G[C]$, to obtain a sparser container~$C'$,
then apply the theorem to $G[C']$, and so on, until the container is as
sparse as we need, or the average degree in the container is so small that
a further application of the container theorem yields no information.

The only point that needs consideration is how much effort we are willing,
or able, to put into the calculation of the codegree function
$\delta(G[C],\tau)$ at each stage. This function determines how small
$\tau$ can be and hence how many (or few) containers are built.  Evidently
the degree $d(\sigma)$ of some set $\sigma\subset V(G)$ is no larger in
$G[C]$ than it is in~$G$, so the simplest approach is 
just to use the original values to obtain an upper bound for
$\delta(G[C],\tau)$. This works
well for a limited number of iterations and it is the basis of the proof of
Corollary~\ref{cor:sparse_container}.

However there are applications where the number of iterations is large ---
growing with~$n$ (examples are Corollary~\ref{cor:C4} and
Theorem~\ref{thm:sidon}), and where care is needed in keeping track of the
codegree function. In such circumstances, Theorem~\ref{thm:iteration} can
be used; Corollary~\ref{cor:sparse_container} is then a special case of
this theorem.

We begin with a simple lemma to help count the number of containers
being generated by iteration.

\begin{lem}\label{lem:entropy}
There are at most $\exp\{s\theta n(1+\log(1/\theta))\}$ $s$-tuples
of subsets $T_1,\ldots,T_s\subset[n]$ with $|T_1|+\ldots+|T_s|\le s\theta
n$, where $0\le\theta \le 1$.
\end{lem}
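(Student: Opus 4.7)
The plan is to encode each $s$-tuple $(T_1,\ldots,T_s)$ as a single subset of $[n]\times[s]$ (equivalently, a subset of $[sn]$) by taking the disjoint union of the $T_i$. Under this bijection, tuples with $|T_1|+\cdots+|T_s|=t$ correspond exactly to $t$-element subsets of $[sn]$, so the number of tuples satisfying $|T_1|+\cdots+|T_s|\le s\theta n$ equals
\[
  \sum_{t=0}^{s\theta n}\binom{sn}{t}.
\]

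The next step is to invoke the standard tail estimate $\sum_{t=0}^{k}\binom{N}{t}\le (eN/k)^{k}$, valid for $k\le N$. Setting $N=sn$ and $k=s\theta n$ yields
\[
  \sum_{t=0}^{s\theta n}\binom{sn}{t}\;\le\;\left(\frac{e\cdot sn}{s\theta n}\right)^{s\theta n}\;=\;\left(\frac{e}{\theta}\right)^{s\theta n}\;=\;\exp\bigl\{s\theta n\,(1+\log(1/\theta))\bigr\},
\]
which is exactly the claimed bound.

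There is essentially no obstacle here; the only thing to watch is that the tail bound $(eN/k)^k$ requires $k\le N$, which is ensured by the hypothesis $\theta\le 1$ (giving $s\theta n\le sn$). If $\theta=0$ the bound is trivial (the only tuple is all-empty), and for $\theta>0$ the formula $\log(1/\theta)$ is well-defined, so no separate cases need to be handled.
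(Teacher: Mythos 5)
Your proof is correct and is essentially the same argument as the paper's. The paper counts the tuples via the generating function $(1+x)^{ns}$ and substitutes $x=\theta$, which is exactly the derivation of the binomial tail estimate $\sum_{t\le k}\binom{N}{t}\le(eN/k)^k$ that you invoke; you have simply recognized the count as $\sum_{t\le s\theta n}\binom{sn}{t}$ explicitly and cited the bound as known rather than rederiving it inline.
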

\begin{proof}
  Let there be $N_j$ such $s$-tuples with $|T_1|+\ldots+|T_s|=j$. We wish
  to bound $N=N_0+N_1+\ldots+N_{\lfloor s\theta n\rfloor}$. The generating
  function for the numbers of subsets of $[n]$ of size~$i$ is $(1+x)^n$.
  Hence the coefficient of $x^j$ in $((1+x)^n)^s$ is the number of ways to
  choose sets $T_1,\ldots,T_s$ of sizes $t_1,\ldots,t_s$ such that
  $t_1+\cdots+t_s=j$; in other words
  $N_0+N_1x+N_2x^2+\ldots=((1+x)^n)^s$. Therefore, since $0\le\theta\le1$,
  we have $\theta^{s\theta n}N\le (1+\theta)^{ns}\le e^{s\theta n}$.
\end{proof}

The next theorem is a version of Theorem~\ref{thm:cover} stripped of
references to degree measure. It is this theorem that we shall apply
iteratively.

\begin{thm}\label{thm:coveroff}
  Let $G$ be an $r$-graph on vertex set $[n]$.
  Suppose that $\delta(G,\tau)\le 1/12r!$, where $0<\tau<1/2$. Then
  there exists a collection $\C\subset\mathcal{P}[n]$ such that
 \begin{itemize}
 \item[(a)] for every independent set $I$ there exists
   $T=(T_{r-1},\ldots,T_0) \in \mathcal{P}(I)^r$ with $I\subset
   C(T)\in\C$ and $|T_i| \le 288r!^2\tau n$,
 \item[(b)] $\log|\C|\le 288r!^2rn\tau\log(1/\tau)$, and
 \item[(c)] $e(G[C]) \le (1 - 1/2r!)e(G)$ for all $C\in\C$.
 \end{itemize}

Moreover, (a) holds for all sets $I\subset[n]$ for which either $G[I]$ is
$\lfloor\tau^{r-1}e(G)/12r!n\rfloor$-degenerate or $e(G[I])\le
24r!r\tau^re(G)$.
\end{thm}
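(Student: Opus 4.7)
The plan is to invoke Theorem~\ref{thm:cover} with the canonical choice $\zeta = 1/12r!$ and then repackage its output. Very little new work is needed beyond one entropy-style counting step and some routine bookkeeping of constants.

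First I would relabel the vertex set so that $d(v)$ is decreasing in~$v$; since $\mathcal{C}$ is just a collection of subsets of $[n]$, this relabelling is harmless. With $\zeta = 1/12r!$ the hypotheses $\delta(G,\tau) \le 1/12r!$ and $\tau \le 1/(144 r!^2 r) \le \zeta^2/r$ are exactly what Theorem~\ref{thm:cover} (and its sparse moreover clause) requires. Let $\mathcal{C}$ be the family of all containers $C(T)$ output by Theorem~\ref{thm:cover} as $T$ ranges over the $r$-tuples arising from independent (or sparse) sets. Part~(c) of that theorem gives $|T_i| \le 2\tau n/\zeta^2 = 288 r!^2 \tau n$, which is part~(a) here; the translation of the degeneracy and edge-count thresholds from the moreover clause of Theorem~\ref{thm:cover}, after substituting $\zeta = 1/12r!$, matches the thresholds $\lfloor \tau^{r-1} e(G)/(12 r! n) \rfloor$ and $24 r! r \tau^r e(G)$ stated here.

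For (c), part~(d) of Theorem~\ref{thm:cover} gives
$$
\mu(C(T)) \le 1 - \frac{1}{r!} + 4\zeta + \frac{2r\tau}{\zeta} = 1 - \frac{1}{r!} + \frac{1}{3r!} + 24 r r! \tau \le 1 - \frac{1}{2r!},
$$
where the last inequality uses $\tau \le 1/(144 r r!^2)$. Combining this with the standard bound $e(G[C]) \le \mu(C) e(G)$ recorded at the start of~\S\ref{sec:iteration} yields~(c). The constants have been tuned precisely so that the slack terms $4\zeta$ and $2r\tau/\zeta$ together consume only half of the $1/r!$ saving.

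For (b), I count the distinct $r$-tuples $T = (T_{r-1}, \ldots, T_0)$ that can arise. Each satisfies $|T_0| + \cdots + |T_{r-1}| \le r \theta n$ with $\theta = 288 r!^2 \tau$, so Lemma~\ref{lem:entropy} (with $s = r$) bounds their number by $\exp\{ r\theta n (1 + \log(1/\theta)) \}$. Since $\log(288 r!^2) \ge 1$ we have $1 + \log(1/\theta) \le \log(1/\tau)$, and the bound collapses to $\exp\{288 r r!^2 n \tau \log(1/\tau)\}$, as required. The main (very modest) obstacle is nothing more than carrying the constants through so that (b) and (c) come out exactly as stated; after that the argument is mechanical.
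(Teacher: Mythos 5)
Your proof is correct and mirrors the paper's own argument: both set $\zeta = 1/12r!$ in Theorem~\ref{thm:cover}, note $\tau \le \zeta^2/r$ to absorb $2r\tau/\zeta$, pass from $\mu(C)$ to $e(G[C])$ via inequality~(\ref{eqn:mutoe}), and count containers with Lemma~\ref{lem:entropy} using $\theta = 288r!^2\tau$ and the observation $1+\log(1/\theta) \le \log(1/\tau)$. The only differences are cosmetic (you expand $4\zeta + 2r\tau/\zeta$ numerically rather than writing $6\zeta$), so this is the paper's proof.
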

\begin{proof}
  Let $\zeta = 1/12r!$. We may assume that $\tau \le \zeta^2 / r$, since
  otherwise we may take $T_{r-1},\ldots,T_0$ to be a partition of $I$ into
  sets of size at most $n/r$ and $C(T)=I$, in which case the constraints in
  the theorem are easily satisfied (since $|\C|\le2^n$: here we used
  $\tau<1/2$).
  Apply Theorem~\ref{thm:cover} to $G$ with $\zeta=1/12r!$. For each
  set $I$ we have $T=(T_{r-1},\ldots,T_0)$ and a container $C(T)$ satisfying
  properties (a)--(d) of that theorem.
  Take~$\C$ to be the collection of all such~$C$.
  Since $\tau\le
  \zeta^2/r$, we have $2r\tau/\zeta \le 2\zeta$, so $\mu(C)\le
  1-1/r!+6\zeta = 1-1/2r!$. It follows from
  inequality~(\ref{eqn:mutoe}) that $e(G[C])\le (1-1/2r!)e(G)$.

  Hence (a) and (c) of the present theorem are satisfied and it remains to
  check~(b). Theorem~\ref{thm:cover} tells us that each container $C$ is
  specified by sets $T_0,\ldots,T_{r-1}$ each of size at most $\theta n$,
  where $\theta = 2\tau /\zeta^2= 288r!^2\tau \le 2/r\le 1$. By
  Lemma~\ref{lem:entropy} we have
  $$
  \log |\C|\le r\theta n(1+\log(1/\theta))\le r\theta n\log(1/\tau)
  = 288r!^2rn\tau\log(1/\tau) \,,
  $$
  establishing~(b) and completing the proof.
\end{proof}

Repeated applications of Theorem~\ref{thm:coveroff} lead to the next
theorem. The rather technical appearance is the natural consequence of
retaining conditions on the codegree function at each stage, so that, if
information on this function is available, then use can be made of it.

\begin{thm}\label{thm:iteration}
  Let $G$ be an $r$-graph on vertex set $[n]$. Let $e_0 \le e(G)$. Suppose
  that, for each $U \subset [n]$ with $e(G[U]) \ge e_0$, the function
  $\tau(U)$ satisfies $\tau(U)<1/2$ and $\delta(G[U],\tau(U)) \le 1/12r!$.
  For $e_0\le m\le e(G)$ define
\begin{align*}
 f(m) &= \max\{\, - |U| \tau(U) \log \tau(U) : U \subset [n],\, e(G[U]) \ge
 m \} \\ 
\tau^* &= \max\{\, \tau(U) : U \subset [n],\, e(G[U]) \ge e_0 \}
\end{align*}
Let $k= \log(e_0/e(G))/\log(1-1/2r!)$.
Then there exists a collection $\C \subset \mathcal{P}[n]$ such that
\begin{itemize}
 \item[(a)] for every independent set $I$ there exists
   $T=(T_1,\ldots,T_s) \in \mathcal{P}(I)^s$ with $I \subset C(T) \in \C$,
   $|T_i| \le 288r!^2\tau^* n$ and $s \le (k+1)r$,
 \item[(b)] $e(G[C]) \le e_0$ for all $C \in \C$,
 \item[(c)] $\log|\C| \le 288r!^2r \sum_{0\le i< k} f( e_0 / (1-1/2r!)^i )$.
\end{itemize}

Moreover, (a) holds for all $I\subset[n]$ for which either $G[I]$ is
$\lfloor\tau(U)^{r-1}e(G[U])/12r!|U|\rfloor$-degenerate or $e(G[I])\le
24r!r\tau(U)^re(G[U])$, for all $U\subset[n]$ with $e(G[U])\ge e_0$.
\end{thm}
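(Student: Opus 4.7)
The plan is to apply Theorem~\ref{thm:coveroff} iteratively in a banded fashion, processing denser containers before sparser ones. Initialize $\C = \{[n]\}$, and for $b = k-1, k-2, \ldots, 0$ in turn, do the following: for each $U \in \C$ with $e(G[U]) \ge e_0/(1-1/2r!)^b$, apply Theorem~\ref{thm:coveroff} to $G[U]$ with parameter $\tau(U)$ (whose hypotheses are assumed), and replace $U$ in $\C$ by the resulting family of sub-containers. Output the final $\C$.

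A short induction on $b$ shows that at the start of step $b$, every $U \in \C$ satisfies $e(G[U]) \le e_0/(1-1/2r!)^{b+1}$, since any child produced at step $b+1$ has $e(G[C]) \le (1-1/2r!)\, e(G[U])$. Hence a parent iterated at step $b$ satisfies $e(G[U]) \in [e_0/(1-1/2r!)^b,\, e_0/(1-1/2r!)^{b+1}]$, and so by definition of $f$ we have $-|U|\tau(U)\log\tau(U) \le f(e_0/(1-1/2r!)^b)$. After step $0$, every $C \in \C$ satisfies $e(G[C]) \le e_0$, proving~(b).

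For~(a), given an independent set $I$, at each step where the current container $U \in \C$ containing $I$ is iterated, Theorem~\ref{thm:coveroff}(a) supplies an $r$-tuple from $\mathcal{P}^r(I \cap U)$ picking out a specific sub-container containing $I$. Concatenating the $r$-tuples across the at most $k$ iterations yields a single tuple $T=(T_1,\ldots,T_s) \in \mathcal{P}^s(I)$ with $s \le kr \le (k+1)r$ and $|T_i| \le 288r!^2\, \tau(U)|U| \le 288r!^2\, \tau^*\, n$. For~(c), set $M_b = \exp\{288 rr!^2 f(e_0/(1-1/2r!)^b)\}$. By Theorem~\ref{thm:coveroff}(b) applied to $G[U]$ with parameter $\tau(U)$, each iterated $U$ at step $b$ yields at most $M_b$ children, while non-iterated $U$ remain unchanged, so $|\C|$ grows by a factor of at most $M_b$ at step $b$. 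Telescoping gives $|\C| \le \prod_{b=0}^{k-1} M_b$, i.e.\ $\log|\C| \le 288 rr!^2 \sum_{0 \le i<k} f(e_0/(1-1/2r!)^i)$.

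The sparse/degenerate extension follows because $b$-degeneracy and bounded edge counts are inherited by induced sub-hypergraphs: if $G[I]$ is suitably degenerate or has few edges relative to $G[U]$ for every relevant $U$, then the analogous property holds for $G[I \cap U]$ inside $G[U]$, and Theorem~\ref{thm:coveroff}'s ``moreover'' clause applies at each iteration. The main subtlety I expect is the banded processing order rather than the naive one: iterating on every container with $e(G[U]) \ge e_0$ at each step gives only the weaker bound $\log|\C| \le 288krr!^2 f(e_0)$, whereas processing bands in decreasing order of density ties each iteration step to a specific, strictly smaller $f$-value and produces the claimed telescoping sum.
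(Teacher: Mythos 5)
Your proposal is correct and takes essentially the same approach as the paper: the paper organizes the argument as a backward induction producing collections $\C_t$ for $t$ ranging from $\ge e(G)$ down to $e_0$, at each stage refining only those $D$ with $e(G[D])>t$, whereas you phrase the identical process as a forward loop over bands $b=k-1,\ldots,0$. In both, a given container is iterated only while its edge count sits in the current band, which is exactly what ties each refinement step to the appropriate $f$-value and yields the telescoping sum in (c) rather than the cruder $k\cdot f(e_0)$.
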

\begin{proof}
We will show that for all $t$ with $e_0\le t\le e(G)/(1-1/2r!)$, there
exists a collection
$\C_t \subset \mathcal{P}[n]$ satisfying conditions (a)--(c),
where the constant $e_0$ has been replaced by $t$ in (a)--(c),
and $k$ is replaced by $k(t)= \log(t/e(G))/\log(1-1/2r!)$.

When $t \ge e(G)$, we may take $\C_t = \{[n]\}$. Otherwise, suppose $t <
e(G)$.  It is enough to show that $\C_t$ exists provided $\mathcal{D} =
\C_{t/(1-1/2r!)}$ exists. Each $D\in\mathcal{D}$ is specified by a tuple
$T'=(T_1,\ldots,T_{s'})$ with $s'\le (k(t/(1-1/2r!))+1)r=k(t)r$. 
If $e(G[D]) \le t$, let $\C_t(D) =\{ D \}$. Otherwise, 
apply Theorem~\ref{thm:coveroff} with
$\tau=\tau(D)\le \tau^*$ to the $r$-graph $G[D]$, and let
$\C_t(D)$ be the collection of containers given by the theorem.
Then put $\C_t = \bigcup_{D \in \mathcal{D}} \C_t(D)$.

If $C \in \C_t(D)$ then $C$ is specified completely by $T'$, together
with the $r$-tuple appearing in condition~(a) of Theorem~\ref{thm:coveroff}
if the theorem was applied. Hence $C$ is
specified completely by a tuple of size at most $(k(t)+1)r$, so satisfying
condition (a).
If
$D \in \mathcal{D}$ then either $e(G[D]) \le t$ in which case
$|\C_t(D)| = 1$, or $e(G[D]) > t$ in which case
\[
\log |\C_t(D)| \le 288r!^2r |D|\tau(D)\log(1/\tau(D)) \le 288r!^2r f(t).
\]
Hence
\[
 \log |\C_t| \le \log |\mathcal{D}| + 288r!^2r f(t)
\le 288r!^2r \sum_{0\le i< k(t)} f( t / (1-1/2r!)^i ).
\]
Finally for $C \in \C_t(D)$, note that $e(G[C]) \le t$, since if
$e(G[D]) > t$ then by condition~(c) of Theorem~\ref{thm:coveroff}
$e(G[C]) \le (1-1/2r!) e(G[D]) \le t$.
\end{proof}

For certain applications the technical detail of
Theorem~\ref{thm:iteration} is not needed; what is required is a simple
statement that a few iterations will produce a container with a negligible
proportion of the original edges. Such a statement was presented earlier as
Corollary~\ref{cor:sparse_container}.

\begin{proof}[Proof of Corollary~\ref{cor:sparse_container}]
  Let $e_0 = \epsilon e(G)$.  Observe that for $U \subset [n]$, if $e(U)
  \ge \epsilon e(G)$ then $\delta(G[U],\tau) \le \delta(G,\tau)/\epsilon
  \le 1/12r!$. Therefore we may apply Theorem~\ref{thm:iteration} to the
  graph $G$ with $e_0= \epsilon e(G)$ and $\tau(U) = \tau$ for all $U$.
  Then $\tau^*=\tau$ and $f(m)= n\tau \log(1/\tau)$. Hence we obtain a
  collection $\C$ satisfying conditions (a) and (b) of the corollary, and
\[
 \log|\C| \le 288r!^2r\left(1+ \frac{\log \epsilon}{\log(1-1/2r!)}\right)
 n \tau \log(1/\tau), 
\]
giving condition (c).
\end{proof}

\section{Uniformly bounded containers}
\label{sec:uniform}

Theorem~\ref{thm:cover} provides containers of degree measure bounded away
from one. In this section we seek containers of uniform measure bounded
away from one.

For regular hypergraphs, the results of~\S\ref{sec:iteration} suffice, as
pointed out in~\S\ref{subsec:uniformly}. However for non-regular
hypergraphs we need something else. For reasons outlined
in~\S\ref{subsec:uniformly}, we consider initial intervals
$[v]\subset[n]$, and look for an interval such that $|C\cap [v]|$ is
bounded away from~$v$. There will in fact be many such intervals, as the
next lemma shows. This is the basic lemma which translates information
about $\mu$-measure into information about uniform measure. In the lemma,
$S$ is a multiset, so $\mu(S)$, $|S\cap [v]|$ and so on have their natural
interpretations counting with multiplicities.

\begin{lem}\label{lem:lemmoid}
Let $\mu:[n]\to\mathbb R$ be a measure with $\mu(1)\ge\mu(2)\ge
\cdots \ge \mu(n)$, and let $S\subset[n]$ be a multiset. Let $W=\{v:|S\cap
[v]|\ge\alpha v\}$. Then
$$
\alpha\,\mu(W)\,\le\,\mu(S)
$$
holds for all $\alpha\ge0$.
\end{lem}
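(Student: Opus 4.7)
The plan is to reduce the measure-theoretic statement to a pointwise inequality about partial sums by Abel summation. Writing $s_v$ for the multiplicity of $v$ in $S$, I have $\mu(S)=\sum_{v=1}^n s_v\mu(v)$ and $\mu(V)=\sum_{v=1}^n\chi_V(v)\mu(v)$ where $V=\{v\in[n]:|S\cap[v]|\ge\alpha v\}$. The claim is thus $\sum_{v=1}^n\bigl(\alpha\chi_V(v)-s_v\bigr)\mu(v)\le 0$. Because $\mu$ is non-negative and weakly decreasing, summation by parts rewrites this as
\[
\sum_{v=1}^n\bigl(\alpha\chi_V(v)-s_v\bigr)\mu(v)\,=\,A_n\mu(n)+\sum_{v=1}^{n-1}A_v\bigl(\mu(v)-\mu(v+1)\bigr),
\]
where $A_v=\alpha\,|V\cap[v]|-|S\cap[v]|$. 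The factors $\mu(n)$ and $\mu(v)-\mu(v+1)$ are all non-negative, so it will suffice to establish the pointwise inequality $A_v\le 0$ for every $v\in[n]$.

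This pointwise claim is the heart of the argument, and the main (small) obstacle. I would prove it by looking at the largest element of $V$ that does not exceed $v$. If $V\cap[v]=\emptyset$ then $A_v=-|S\cap[v]|\le 0$ trivially. Otherwise let $v^*=\max(V\cap[v])$; then $|V\cap[v]|=|V\cap[v^*]|\le v^*$, while by monotonicity of $u\mapsto|S\cap[u]|$ and the defining property of $v^*\in V$,
\[
|S\cap[v]|\,\ge\,|S\cap[v^*]|\,\ge\,\alpha v^*\,\ge\,\alpha\,|V\cap[v]|,
\]
so $A_v\le 0$ as required.

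Combining the two steps, $\sum_{v=1}^n\bigl(\alpha\chi_V(v)-s_v\bigr)\mu(v)\le 0$, which is exactly $\alpha\mu(V)\le\mu(S)$. The argument uses only the decreasing rearrangement of $\mu$ (to make the Abel-summation weights non-negative) and the maximal-element trick (to transfer the definition of $V$ into a partial-sum inequality); no iteration or case analysis on $\alpha$ is needed, and the extension from sets to multisets is automatic since $s_v$ simply records multiplicities.
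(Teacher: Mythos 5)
Your proposal is correct and takes essentially the same Abel-summation approach as the paper's proof. The paper rearranges the sum over the reindexed sequence $w_1<\cdots<w_k$ and invokes $|S\cap[w_i]|\ge\alpha w_i\ge\alpha i$; you perform the summation by parts over all of $[n]$ and reduce to the pointwise bound $A_v\le 0$ via the maximal element $v^*$, which is the same inequality in a slightly different guise.
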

\begin{proof}
  Let $W=\{w_1,\ldots,w_k\}$ where $k=|W|$ and
  $w_1<w_2<\ldots<w_k$. Define the numbers $s_1,\ldots,s_k$ by
  $s_1=|S\cap[w_1]|$ and $s_i = |S\cap [w_{i-1}+1,w_i]|$ for $i\ge2$.  Then
  we have $\mu(S\cap [w_{i-1}+1,w_i])\ge s_i\mu(w_i)$, because
  $\mu(1)\ge\mu(2)\ge \cdots \ge \mu(n)$. Therefore
\begin{align}
\mu(S)&\,\ge\,\mu(S\cap[w_1]) + \mu(S\cap[w_1+1,w_2]) +
\cdots +  \mu(S\cap[w_{k-1}+1,w_k])  \nonumber \\
&\,\ge\, s_1\mu(w_1)+s_2\mu(w_2)+\cdots+s_k\mu(w_k)  \nonumber \\
&\,=\, \sum_{i=1}^k \alpha\mu(w_i) + \sum_{i=1}^k(s_1+\cdots+s_i - \alpha
 i)(\mu(w_i)-\mu(w_{i+1}))  \nonumber \\
&\,=\, \alpha\mu(W) + \sum_{i=1}^k (s_1+\cdots+s_i - \alpha
 i)(\mu(w_i)-\mu(w_{i+1}))\,, \label{eqn:lemmoid}
\end{align}
where $\mu(w_{k+1})$ is defined to be zero.  Now
$|S\cap[w_i]|=s_1+\cdots+s_i$ holds for $1\le i\le k$, and so
$s_1+\cdots+s_i \ge \alpha w_i$, because $w_i\in W$. In particular,
$s_1+\cdots+s_i \ge \alpha i$, since $w_i\ge i$.  Moreover $\mu$ is a
decreasing function, so each summand in (\ref{eqn:lemmoid}) is
non-negative, and the lemma follows.
\end{proof}

In fact we shall need not just that $|C\cap[v]|$ is bounded for a single
container $C$ but that the average $(1/t)\sum_{i=1}^t |C_i\cap[v]|$ is
bounded for a collection $C_1,\ldots,C_t$. We shall, at the same time, be
interested in the sets $T_1,\ldots,T_s$ used to construct these containers,
and we need to find a $v$ for which the average of the $|T_j\cap[v]|$ is
simultaneously under control. For technical reasons arising when we come to
the application, very small values of~$v$ will be of no use, so we set a
lower bound on its value. The next lemma prepares the way.

\begin{lem}\label{lem:g}
  Let $\mu$ be a probability measure on $[n]$ with $\mu(1)\ge\mu(2)\ge
\cdots \ge \mu(n)\ge0$.
  Let $T_1,\ldots,T_s,C_1,\ldots,C_t$ be subsets of $[n]$, with
  $\mu(T_i)\le \lambda$ for $1\le i\le s$ and $\mu(C_j)\le 1-c-\eta$ for
  $1\le j\le t$, where $c,\eta>0$. Suppose moreover that $k\in[n]$ and
  $\mu([k])\le \eta c$. Then there exists $v\in[k, n]$ with
$$
\frac{1}{s}\sum_{i=1}^s|T_i\cap [v]|< \frac{\lambda}{\eta}v\mbox{\qquad and
  \qquad} \frac{1}{t}\sum_{i=1}^t |C_i\cap[v]| < (1-c)v\,.
$$
\end{lem}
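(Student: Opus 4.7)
The plan is to apply Lemma~\ref{lem:lemmoid} twice and then combine the resulting $\mu$-measure estimates with the hypothesis $\mu([k])\le\eta c$ via a simple union bound. Form the multiset $\widetilde T=T_1\uplus\cdots\uplus T_s$, so that $|\widetilde T\cap[v]|=\sum_i|T_i\cap[v]|$ and $\mu(\widetilde T)\le s\lambda$. Applying Lemma~\ref{lem:lemmoid} to $\widetilde T$ with $\alpha=s\lambda/\eta$ shows that the bad set
$$
A=\Bigl\{v\in[n]:\tfrac{1}{s}\sum_{i=1}^{s}|T_i\cap[v]|\ge\tfrac{\lambda}{\eta}v\Bigr\}
$$
satisfies $\mu(A)\le\eta$. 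Analogously, with $\widetilde C=C_1\uplus\cdots\uplus C_t$ having $\mu(\widetilde C)\le t(1-c-\eta)$, Lemma~\ref{lem:lemmoid} applied with $\alpha=t(1-c)$ gives
$$
B=\Bigl\{v\in[n]:\tfrac{1}{t}\sum_{i=1}^{t}|C_i\cap[v]|\ge(1-c)v\Bigr\},\qquad\mu(B)\le 1-\tfrac{\eta}{1-c}.
$$

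Next, since $[k-1]\subset[k]$, we have $\mu([k,n])=1-\mu([k-1])\ge 1-\mu([k])\ge 1-\eta c$. A short manipulation yields
$$
\mu(A)+\mu(B)\le\eta+1-\tfrac{\eta}{1-c}=1-\tfrac{\eta c}{1-c},
$$
which is strictly smaller than $1-\eta c$ because $c>0$. Chaining the inequalities,
$$
\mu(A\cup B)\le\mu(A)+\mu(B)\le 1-\tfrac{\eta c}{1-c}<1-\eta c\le\mu([k,n]),
$$
so $[k,n]\not\subset A\cup B$, and any $v\in[k,n]\setminus(A\cup B)$ witnesses both strict inequalities in the conclusion.

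The only delicate point is checking that the union bound $\mu(A)+\mu(B)$ actually beats $\mu([k,n])$. The factor $1/(1-c)$ coming from the second application of Lemma~\ref{lem:lemmoid}, as opposed to a bare~$1$, is exactly what creates the gap between $\eta c$ and $\eta c/(1-c)$; the hypothesis $\mu([k])\le\eta c$ is calibrated precisely so that this slack suffices. Beyond that observation the argument is just bookkeeping.
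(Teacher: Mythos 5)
Your proof is correct and takes essentially the same route as the paper's: two applications of Lemma~\ref{lem:lemmoid} with $\alpha=s\lambda/\eta$ and $\alpha=t(1-c)$ to get $\mu(A)\le\eta$ and $\mu(B)\le 1-\eta/(1-c)$, followed by a union bound against $\mu([k])\le\eta c$. The only cosmetic difference is that you phrase the final step as $\mu(A\cup B)<\mu([k,n])$ rather than the paper's equivalent $\mu(U\cup W\cup[k])<1$.
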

\begin{proof}
  Let $U=\{v:\sum_{i=1}^s |T_i\cap[v]| \ge
  s\lambda v/\eta\}$.
  Writing $S$ for the multiset which is the disjoint union
  of $T_1,\ldots,T_s$, so that $\mu(S)\le s\lambda$ and $|S\cap [v]| =
  \sum_{i=1}^s |T_i\cap[v]|$, we can apply Lemma~\ref{lem:lemmoid} with
  $\alpha = s\lambda/\eta$ to obtain $\mu(U)\le \mu(S)/\alpha\le \eta$.
  
  In like manner, let $W=\{v:\sum_{i=1}^t |C_i\cap[v]| \ge
  t(1-c)v\}$. Writing now $S$ for the multiset which is the disjoint union
  of $C_1,\ldots,C_t$, so that $\mu(S)\le t(1-c-\eta)$ and $|S\cap [v]| =
  \sum_{i=1}^t |C_i\cap[v]|$, we apply Lemma~\ref{lem:lemmoid} with
  $\alpha=t(1-c)$ to obtain $\mu(W)\le t(1-c-\eta)/\alpha =
  1-\eta/(1-c)$.

  It follows that $\mu(U\cup W\cup[k]) \le \eta + 1 -\eta/(1-c) + \eta c <
  1$, so there exists $v\in[n]$ not contained in $U\cup W\cup[k]$. This $v$
  satisfies the conditions of the corollary (indeed, with $v\in[k+1,n]$).
\end{proof}

We can now prove the main result about containers and uniform measure,
namely Theorem~\ref{thm:uniform}, which was discussed
in~\S\ref{subsec:uniformly}. The idea of the proof is roughly as
follows. Theorem~\ref{thm:cover} supplies a set of containers. For each
tuple $(C_1,\ldots,C_t)$ of these containers we use Lemma~\ref{lem:g} to
nominate a vertex $v=g(C_1,\ldots,C_t)$ so that the restrictions to $[v]$
of the $C_i$ and of their generating sets $T_j$ are simultaneously bounded
in uniform measure. The online property means that the restrictions
$C_i\cap[v]$ are determined by the $T_j\cap[v]$, which are small, and so
the number of restricted containers is small.

\begin{proof}[Proof of Theorem~\ref{thm:uniform}]
  Apply Theorem~\ref{thm:cover} to~$G$ to obtain a collection $\C$ of
  containers $C(T)$ for $T=(T_{r-1},\ldots,T_0)\in\mathcal{P}([n])^r$. By
  assumption, $\tau\le\zeta^2/r$, and so $2r\tau/\zeta\le
  2\zeta$. Therefore $\mu(C(T))\le  1-1/r!+6\zeta$.

  Let $(C_1,\ldots,C_t)\in\C^t$, where $t\in\mathbb{N}$. Each $C_i$ is
  specified by an $r$-tuple of sets $T_j$, so the whole collection
  $(C_1,\ldots,C_t)$ is specified by $rt$ sets which, after re-labelling,
  we call $T_1,\ldots,T_{rt}$, with $\mu(T_i)\le 2\tau/\zeta$ for $1\le
  i\le rt$. Let $c=1/r!-8\zeta$, so (since $\zeta\le1/12r!$) $c>1/4r!$. Let
  $\eta=2\zeta$. By assumption,
  $\mu([k])\le\zeta/2r!$, so $\mu([k])<\eta c$. Hence the conditions of
  Lemma~\ref{lem:g} are satisfied with $s=rt$ and $\lambda=2\tau/\zeta$, and
  so there exists $v\in[k,n]$ with
$$
\frac{1}{s}\sum_{i=1}^s|T_i\cap [v]|< \frac{\tau}{\zeta^2}v\mbox{\qquad and
  \qquad} \frac{1}{t}\sum_{i=1}^t |C_i\cap[v]| < (1-\frac{1}{r!}+8\zeta)v\,.
$$
Define $g(C_1,\ldots,C_t)=v$. Then~(a) and~(c) of the theorem are
satisfied.

To obtain~(b), we need that the containers have the online property: in
other words, the $t$-tuple $(C_1\cap[v],\ldots,C_t\cap[v])$ is determined
by $T_1\cap[v],\ldots, T_s\cap[v]$. This online property is guaranteed by
Theorem~\ref{thm:cover}. Hence the size of the set
$Z=\{(C_1\cap[v],\ldots,C_t\cap[v]):g(C_1,\ldots,C_t)=v \}$ is bounded by
the number of tuples $(T_1\cap[v],\ldots, T_s\cap[v])$. Now
$\sum_{i=1}^s|T_i\cap [v]|< s\theta v$, where $\theta=\tau/\zeta^2
< 1$. So by Lemma~\ref{lem:entropy}
$$
\log|Z|\,\le\, s\theta v(1+\log(1/\theta))\,\le\,s\theta
  v\log(1/\tau)\,=\, \zeta^{-2} vtr\tau\log(1/\tau)\,,
$$
which completes the proof.
\end{proof}

\section{List colourings}\label{sec:list}

In~\cite{ST}, a lower bound for the list colouring number of a regular
hypergraph was proved. Theorem~2.1 of that paper, based on a simple
probabilistic argument, gave a bound of approximately $(\log k)/\log (1/c)$
provided there is a collection $\C$ of containers for the independent sets,
with $|C|\le(1-c)n$ for each $C\in\C$ and with $|\C|\le e^{n/k}$. The proof
fails to work for a general hypergraph because it is not possible to find
containers of bounded size.

As mentioned in~\S\ref{subsec:uniformly},
Corollary~\ref{cor:sparse_container} supplies suitable containers for
regular hypergraphs, and the number of containers is fewer than
in~\cite{ST}. This gives a direct improvement on the result
of~\cite{ST}. However, to obtain a similar result for general hypergraphs
we must make use of Theorem~\ref{thm:uniform}.

It is worth recapping briefly the simple argument of~\cite{ST}, because it
explains the basis of what follows though without the technicalities. It
also gives a clear illustration of why containers are useful.

Let $G$ be an $r$-graph with vertex set $[n]$.  Let $[t]$ be some set of
colours and let ${\mathcal L} =\{ L_u : u \in [n],\,L_u\subset[t] \}$ be a
collection of colour lists, one for each vertex, with $|L_u|=\ell$ for each
$u\in[n]$. A colouring of $G$ is a choice function $f:[n]\to[t]$ with
$f(u)\in L_u$ such that no edge is monochromatic. If we can find a
collection $\mathcal{L}$ with no colouring, then $\chi_l(G)>\ell$, which is
our goal. We choose the lists $L_u$ at random from a palette $[t]$ with $t$
around $\ell^2$ (so choosing with replacement is much the same as choosing
without). If the lists admit a choice function $f$, then, for each colour
$i\in[t]$, the set of vertices with $f(u)=i$ is independent. Thus there
exists a collection of independent sets $(I_1,\ldots,I_t)$ with $u\in
I_{f(u)}$ for all $u\in[n]$. We say that $\mathcal{L}$ is {\em compatible}
with $(I_1,\ldots,I_t)$ if such a choice function $f$ exists with $u\in
I_{f(u)}$ for all~$u$; in other words, the graph can be coloured so that
all the vertices receiving colour~$i$ lie within $I_i$, $1\le i\le
t$. Notice that we did not specify that $I_i$ is precisely the set of
vertices~$u$ with $f(u)=i$, only that it contains them all.

Let $\mathcal I$ be the collection of independent sets. It follows that if
we can find a collection $\mathcal{L}$ compatible with no tuple
$(I_1,\ldots,I_t)\in\mathcal{I}^t$, then we have shown $\chi_l(G)>\ell$. We
say that such an $\mathcal{L}$ is $\mathcal I$-{\em incompatible}.  Suppose
now that $|I|\le (1-c)n$ for all $I\in\mathcal{I}$. Roughly speaking
(precise calculations come in the proof of
Lemma~\ref{lem:uncompatible_lists}), given a tuple
$(I_1,\ldots,I_t)$, an average vertex $u$ will lie in at most $(1-c)t$ of
the $I_i$, so the probability that $L_u$ contains a colour $f(u)$ with
$u\in I_{f(u)}$ is at most $1-c^\ell\le e^{-c^\ell}$. Hence
the probability of $\mathcal{L}$ being compatible with a given tuple
$(I_1,\ldots,I_t)$ is at most $e^{-nc^\ell}$, and so the probability that
$\mathcal{L}$ fails to be $\mathcal I$-incompatible is at most
$|\mathcal{I}|^te^{-nc^\ell}$. If this were less than one then there would
exist an $\mathcal I$-incompatible collection $\mathcal{L}$.  Unfortunately
$|\mathcal{I}|$ can be as large as $2^{\Omega(n)}$ and the approach yields
nothing.

However, the same argument can be made with the containers $\C$ in place of
the independent sets $\mathcal I$; for each independent set $I_j$ above
there must be a $C_j$ containing it, and for a choice function to work
there must be a tuple $(C_1,\ldots,C_t)$ with which $f$ is compatible,
meaning $u\in C_{f(u)}$ for each $u\in[n]$. We now want $\mathcal{L}$ to be
$\C$-incompatible, that is, compatible with no $(C_1,\ldots,C_t)$, and,
assuming $|C_i|\le(1-c)n$ for all~$i$, the probability of this failing is
at most $|\C|^te^{-nc^\ell}$. If $|\C|=2^{\tau n}$ with
$\tau=d^{-1/(r-1)}$, then this probability is less than one for some $\ell$
with $\ell=\Omega(\log d)$, which is therefore a lower bound for
$\chi_l(G)$.

The existence of $\mathcal{L}$, contingent on the existence of a suitable set
of containers~$\C$, is proved in detail in the next lemma. The main
difference between the lemma and the preceding sketch is that we cannot
assume $|C_i|\le(1-c)n$ for each container, and instead we must use the
properties of $\C$ given by Theorem~\ref{thm:uniform}.

\begin{lem}\label{lem:uncompatible_lists}
  Let $0<\epsilon, c<1$. Then there exists
  $k_0=k_0(\epsilon,c)$, such that the following property holds for all
  $k>k_0$.

  Let $\ell = \lfloor (1-\epsilon) \log k / \log (1/c) \rfloor$
  and let $t = \lfloor 2\ell^2/c\rfloor$. Let $n>k$ and let $\C \subset
  \mathcal{P}[n]$. Suppose that there is a map $g:{\C}^t\to[k,n]$, such
  that
$$
\frac{1}{t}\sum_{i=1}^t|C_i\cap[v]|\,\le\,(1-c)v \leqno{\quad(a)}
$$
holds for every $(C_1,\ldots,C_t)\in{\C}^t$, where
$v=g(C_1,\ldots,C_t)$.  Suppose moreover that
$$
\left|\{\,(C_1\cap[v],\ldots,C_t\cap[v])\,:\,g(C_1,\ldots,C_t)=v\,
\}\right|\,\le\,e^{vt/k}\leqno{\quad(b)}
$$
holds for all $v\in[n]$.
Then there is a collection of lists $\{ L_u : u \in [n] \}$, each of size
$|L_u| =\ell$, which is $\C$-incompatible.
\end{lem}

\begin{proof}
  For each $u \in [n]$, let $L_u \in [t]^{(\ell)}$ be a subset of $[t]$ of
  size $\ell$ chosen uniformly and independently at random, and let
  ${\mathcal L}   =\{ L_u : u \in [n] \}$ be the collection of lists. We
  need to show that, with positive probability, ${\mathcal L}$ is
  compatible with no tuple $(C_1,\ldots,C_t) \in \C^t$.

  Given some $(C_1,\ldots,C_t) \in \C^t$, then ${\mathcal L}$ is compatible with
  $(C_1,\ldots,C_t)$ if there is a choice function $f:[n]\to[t]$ with $u\in
  C_{f(u)}$  for all~$u$. We define, for each
  $u\in[n]$, the set of colours
  \[
  B_u = B_u(C_1,\ldots,C_t) = \{ i \in [t] : u \in C_i \}\,.
  \]
  We can find a choice function if, and only if, we can select $f(u)\in
  L_u\cap B_u$ for each $u\in[n]$; in other words, if $L_u\cap
  B_u\ne\emptyset$. Hence we shall prove the theorem by showing that, with
  positive probability, for every tuple $(C_1,\ldots,C_t)$ there is some
  $u\in[n]$ with $L_u\cap B_u=\emptyset$.

  In fact, we claim something stronger: with positive probability,
  ${\mathcal L}$ {\em rejects} every tuple $(C_1,\ldots,C_t)$, meaning that
  there is some $u\in[v]$ with $L_u \cap B_u = \emptyset$, where
  $v=g(C_1,\ldots,C_t)$. Notice that the event that $(C_1,\ldots,C_t)$ is
  rejected depends only on $\mathcal L$ and on the tuple
  $(C_1\cap[v],\ldots,C_t\cap[v])$; it is because the conditions of the
  theorem give information about this tuple that we work with the stronger
  claim.

  To establish the claim, fix for the time being
  some tuple $(C_1,\ldots,C_t)$ and let $v=g(C_1,\ldots,C_t)$. Let
  $u\in[v]$ and write $\mathbf{1}_{u\in C_i}$ for the indicator that $u\in
  C_i$. By condition~(a) of the theorem, we have
\begin{equation}\label{eqn:BC}
  \sum_{u\in[v]}|B_u|=\sum_{u\in[v]}\sum_{i=1}^t \mathbf{1}_{u\in C_i}
  =\sum_{i=1}^t|C_i\cap[v]|\le (1-c)vt\,.\\
\end{equation}
Let $p_u$ be the probability that $L_u \cap B_u =
  \emptyset$, or equivalently $L_u \subset [t] \setminus B_u$. Then
\[
 p_u = \mathbb{P}(L_u \cap B_u = \emptyset) =
 \binom{z_u}{\ell}\binom{t}{\ell}^{-1}
\quad\mbox{where $z_u =t - |B_u|$}\,.
\]
We note here that $\ell\ge 1$ if $k_0$ is large enough and thus $ct>\ell$.
Write $z$ for the average of the values $z_u$ for $u\in[v]$. Then
inequality~(\ref{eqn:BC}) yields $ vz= \sum_u
t-|B_u|\geq vct$.
So we have
\begin{align}
 \sum_{u\in[v]} p_u =\sum_{u\in[v]}\binom{z_u}{\ell} \binom{t}{\ell}^{-1}
\,&\ge\, v  \binom{z}{\ell}\binom{t}{\ell}^{-1} \nonumber\\
  \,&\geq\, v \binom{ct}{\ell}\binom{t}{\ell}^{-1}
  \,\geq\, v (c-(\ell-1)/t)^\ell \,.\nonumber
\end{align}
Since $\ell\ge1$ we have $(\ell-1)/t\le (\ell-1)/(2\ell^2/c-1) \le c/2\ell$, and so
$(c-(\ell-1)/t)^\ell\ge c^\ell(1-1/2\ell)^\ell\ge c^\ell/2$.  Hence the probability that
$\mathcal L$ fails to reject $(C_1,\ldots,C_t)$ is
\begin{align}
 \mathbb{P}( B_u \cap L_u \ne \emptyset \mbox{ for all } u \in [v]) &=\,
 \prod_{u\in[v]} (1-p_u)  \nonumber\\
 &\leq\, \exp \{-\sum_{u\in[v]} p_u\}  \leq \exp \{-v c^\ell/2 \}\,.\nonumber
\end{align}
As mentioned, the event that $(C_1,\ldots,C_t)$ is not rejected depends
only on the tuple $(C_1\cap[v],\ldots,C_t\cap[v])$ and, by condition~(b) of
the theorem, there are at most $\exp\{vt/k\}$ of these tuples as
$(C_1,\ldots,C_t)$ ranges over ${\C}^t$. Hence if we fix~$v$ and write
$P_v$ for the probability that there is some tuple $(C_1,\ldots,C_t)$ with
$v=g(C_1,\ldots,C_t)$ which is not rejected, then, recalling the
definitions $\ell = \lfloor (1-\epsilon) \log k / \log (1/c) \rfloor$ and
$t = \lfloor 2\ell^2/c\rfloor$, we have
\begin{align*}
P_v\,&\,\le \exp \{ vt/k  - vc^\ell/2 \}\\
  &\,\le \exp \left\{ \frac{v}{2k} \left[ 
\frac{4}{c}\left(\frac{(1-\epsilon) \log k}{\log 1/c}\right)^2
- k^{\epsilon} \right] \right\} &\mbox{since $c^\ell\ge k^{\eps-1}$}\\
  &\,\le \exp \left\{ -\frac{v}{2k} k^{\epsilon/2}
  \right\}&\mbox{if $k_0$, and so $k$, is large enough}\\
&\,\le\exp \left\{-\frac{1}{2}v^{\epsilon/2}\right\} 
&\mbox{since $k\le v$}\\
&\,\le v^{-2}&\mbox{if $k_0$, and so $v\ge k_0$, is large enough.}
\end{align*}

Finally, if we consider all tuples $(C_1,\ldots,C_t)\in{\C}^t$,
the probability that one of them is not rejected is at most
$$
\sum_{v\in[k,n]} P_v \,\le\,\sum_{v\ge k}v^{-2}\,<\,1
$$
if $k_0$ is large. This establishes our claim and so proves the lemma.
\end{proof}

We can now prove our main result about list colouring. The proof follows by
feeding Theorem~\ref{thm:uniform} into Lemma~\ref{lem:uncompatible_lists}
(for regular graphs we use Corollary~\ref{cor:sparse_container} instead of
Theorem~\ref{thm:uniform}). The lower bound on $\chi_l(G)$ given by
Lemma~\ref{lem:uncompatible_lists} is $(1+o(1))\log k/\log
(1/c)$. Comparing condition~(b) in Theorem~\ref{thm:uniform} with that in
Lemma~\ref{lem:uncompatible_lists} shows that $k$ is not far from
$\zeta^2/\tau$, and we know that $\tau$ for simple graphs can be roughly
$d^{-1/(r-1)}$. This explains where the $\log d$ in the theorem comes
from.

To get the best result, we want the number $c$ in
Lemma~\ref{lem:uncompatible_lists} to be as large as possible, which, by
comparing Lemma~\ref{lem:uncompatible_lists}(a) with
Theorem~\ref{thm:uniform}(c) means making $\zeta$ small (unlike in other
applications where typically $\zeta=1/12r!$ is a good choice.) However if
$\zeta$ is too small then $k$ becomes small. For these reasons we choose
$\zeta=\zeta(d)$ so that, as $d\to\infty$, then $\zeta=o(1)$ and
$\zeta=d^{o(1)}$, the exponent here being negative.

\begin{proof}[Proof of Theorem~\ref{thm:chil}.]
  As explained in the preceding discussion, we take $\zeta=\zeta(d)$ so
  that, as $d\to\infty$, then $\zeta=o(1)$ and $\zeta=d^{o(1)}$.
  Let
  $\tau=d^{-1/(r-1)}\zeta^{-3}$. We now check that the conditions of
  Theorem~\ref{thm:uniform} are satisfied.  Certainly $\zeta\le 1/12r!$
  because $\zeta=o(1)$. Also, recalling Definition~\ref{defn:delta},
  we have $d^{(j)}(v)\le 1$ by simplicity and
  $\delta_j = \sum_vd^{(j)}(v)/\tau^{j-1}nd\le\zeta^2=o(\zeta)$, so
  $\delta(G,\tau)\le \zeta$. Moreover $\tau\le \zeta^2/r$ because
  $\zeta=d^{o(1)}$.

  Let $k=\lfloor\zeta^3/\tau\log(1/\tau)\rfloor$. Then $\log
  k=(1/(r-1)+o(1))\log d$. Let $v\in V(G)$. For each $u\in V(G)$, at most
  one edge contain both $u$ and $v$,
  and so $d(v)\le n$. It follows that $\mu([k])\le
  (1/nd)kn\le\zeta/2r!$. This completes
  the check of the conditions of Theorem~\ref{thm:uniform}. Therefore
  there exists a collection $\C$ of containers for the independent sets
  of~$G$, satisfying properties~(b) and~(c) of Theorem~\ref{thm:uniform},
  and since $\zeta^{-2}r\tau\log(1/\tau)< 1/k$ it follows that
  conditions~(a) and~(b) of Lemma~\ref{lem:uncompatible_lists} are
  satisfied, with $c=1/r!-8\zeta\ge (1+o(1)) r^{-(r-1)}$.

  Consequently there are lists of size $(1+o(1))\log k/\log(1/c)$ that are
  not $\C$-compatible, which is to say lists of size at least
  $(1/(r-1)+o(1))\log d /\log(1/c)\ge(1/(r-1)^2+o(1))\log_r d$. Since $\C$
  is a set of containers for the independent sets of~$G$, the first claim
  of the theorem follows.

  The proof for regular graphs is similar, except that in
  Lemma~\ref{lem:uncompatible_lists} we are able to take $c=1/r+o(1)$. To
  achieve this we make use of Corollary~\ref{cor:sparse_container} instead
  of Theorem~\ref{thm:uniform}. With $\tau$, $\zeta$ and $k$ defined as
  before, we can take $\epsilon=\zeta$ in
  Corollary~\ref{cor:sparse_container} because
  $\delta(G,\tau)=o(\zeta)$. We obtain a collection $\C$ of containers such
  that $e(G[C])\le \zeta e(G)=o(e(G))$ for all $C\in\C$. Because $G$ is
  regular this implies, as mentioned after inequality~(\ref{eqn:mubig}),
  that $|C|\le(1-1/r+o(1))n$ where $n=|G|$. We can now apply
  Lemma~\ref{lem:uncompatible_lists} by defining $g(C_1,\ldots,C_t)=n$
  for all $(C_1,\ldots,C_t)$; note that condition~(b) of the theorem is
  satisfied because, by Corollary~\ref{cor:sparse_container}, $\log|\C|=O(
  \log(1/\epsilon)n\tau\log(1/\tau))<n/k$. The remainder of the proof is
  the same.
\end{proof}

The bound given for $r$-graphs of average degree~$d$ is weaker
than that for regular $r$-graphs because we only had containers of measure
$1-1/r!$ available, rather than $1-1/r$. Probably this is an artifact of
our algorithm, and $\chi_l(G)\ge(1/(r-1)+o(1))\log_rd$ holds
for $r$-graphs of average degree~$d$.

Observe that, since the proof uses Corollary~\ref{cor:sparse_container}
instead of Theorem~\ref{thm:uniform} for regular graphs, it is not
necessary to impose the condition of simplicity in the regular case. The
proof in fact works provided $d^{(j)}(v)\le d^{(r-j)/(r-1)+o(1)}$ as
$d\to\infty$ for every $v\in V(G)$ and every $2\le j\le r$ (recall
Definition~\ref{defn:delta}), since $\tau$ can then be chosen to ensure
$\delta(G,\tau)\le\zeta$, and the bound on $d^{(2)}$ implies a bound on the
maximum degree which in turn bounds $\mu([k])$. This implies a theorem of
Alon and Kostochka~\cite{AK2} in the case of regular hypergraphs.

As far as non-simple regular graphs go, the bound is tight. Indeed, let
$K(r,m)$ be the complete $r$-partite $r$-graph with $m$ vertices in each
class. Suppose that lists of size $\ell$ are given to the
vertices. Randomly choose, for each colour in the palette, a vertex class
on which that colour is forbidden to be used; then the expected number of
vertices with no available colour is $rmr^{-\ell}$ which is less than one
if $\ell> 1+\log_r m$, and so $\chi_l\le2+\log_r m$ (see Haxell and
Verstra\"ete~\cite{HV}). This graph is $d$-regular where $d=m^{r-1}$ so
$\chi_l\le2+(1/(r-1))\log_r d$. Note that
$d^{(j)}(v)=m^{r-j}=d^{(r-j)/(r-1)}$.

It is not hard to construct an $m$-regular simple subgraph $G$ of $K(r,m)$,
and so (putting $d=m$) we have simple $d$-regular $r$-graphs with
$\chi_l\le2+\log_r d$. Quite possibly $\chi_l\le2+(1/(r-1))\log_r
d$ in this case too, because a subgraph of $G$ with
$d^{1-1/(r-1)}$ vertices in each class is likely to be very sparse, and a
random colouring might be repairable if $rdr^{-\ell}< d^{1-1/(r-1)}$, or
$\ell>1+(1/(r-1))\log_r d$. But this argument is far from rigorous.

As an illustration of the use of containers for non-independent sets we
finish with the next result.

\begin{thm}
  \label{thm:list_colour_planar}
  Let $G$ be a graph with average degree~$d$. Then, for each $u\in V(G)$
  there is a list $L_u$ of $(1+o(1))\log_2d$ colours, such that it is not
  possible to choose a colour $c(u)\in L_u$ with the vertices of each
  colour spanning a planar graph.
\end{thm}
\begin{proof}
  We follow the proof of Theorem~\ref{thm:chil} with $r=2$, except we use
  a set $\C$ of containers for those subsets $I$ for which $G[I]$ is
  planar. Since a planar graph is $5$-degenerate, we can apply
  Theorem~\ref{thm:uniform} and continue with the proof exactly as before,
  provided $5\le \tau d\zeta/r$. But $\tau=d^{-1}\zeta^{-3}$ so this condition
  holds comfortably.
\end{proof}

It is possible to extend the colouring results here to non-simple $r$-graphs ---
see~\S\ref{sec:postscript}.

\section{$H$-free graphs}\label{sec:hfree}

In this section we prove Theorem~\ref{thm:ffree_cover}. In fact we will show
a slight strengthening of it. We will apply the container theorem given by
Corollary~\ref{cor:sparse_container} to the following hypergraph, whose
independent sets correspond to $H$-free $\ell$-graphs on vertex set $[N]$.

\begin{defn}\label{def:GNH}
 Let $H$ be an $\ell$-graph. Let $r=e(H)$. The $r$-graph $G(N,H)$ has vertex
set $[N]^{(\ell)}$, where $B=\{v_1, ..., v_r\} \in V(G)^{(r)}$ is an edge
whenever $B$, considered as an $\ell$-graph with vertices in $[N]$ and with
$r$ edges, is isomorphic to $H$.
\end{defn}

We re-emphasise that all our results about $H$-free graphs are simple
consequences of Theorem~\ref{thm:ffree_cover}, and that this theorem is
itself just a restatement, in graphical language, of the container theorem
applied to the hypergraph $G(N,H)$. As already mentioned, $H$-free graphs
are precisely independent subsets of~$G(N,H)$, and the graphs that contain
the $H$-free graphs, which themselves contain few $H$-free graphs, are
precisely the containers given by Corollary~\ref{cor:sparse_container}. In
order to apply the corollary to~$G(N,H)$, all that is needed is to estimate
$\delta(G(N,H),\tau)$. The easy calculation is carried out in
Lemma~\ref{lem:ffree_delta}. As discussed in~\S\ref{subsec:optimality},
the outcome is more or less optimal, for every~$H$.

That said, we do permit ourselves a variation on this theme. For certain
purposes, it turns out that we want to work, not with all possible copies
of~$H$, but with only a subset of them of particular interest. For example,
we may care only about copies of~$H$ whose vertices are aligned with some
partition of~$[N]$; this is the case for the K{\L}R conjecture
(Theorem~\ref{thm:klr3}). It will be seen that all that is needed here is
to apply the container theorem to a subgraph of $G(N,H)$ rather than to
$G(N,H)$ itself. Hence the next theorem generalizes
Theorem~\ref{thm:ffree_cover} by allowing this. At the same time we also
strengthen the theorem by providing a collection of containers for graphs
that are not necessarily $H$-free, but nonetheless contain few copies
of~$H$. This is allowed by the container theorem, in which the sets $I$ do
not have to be completely independent.

The notation $\widetilde{G} \subset G(N,H)$ means $\widetilde{G}$ is a
subgraph of $G(N,H)$, and in every case of interest
$V(\widetilde{G})=V(G(N,H))= [N]^{(\ell)}$. The edges of $\widetilde{G}$
thus represent a subcollection of the copies of~$H$ on vertex set~$[N]$.
So if $A \subset [N]^{(\ell)}$, that is, if $A$ is an $\ell$-graph on vertex
set $[N]$, then the induced $e(H)$-graph $\widetilde{G}[A]$ corresponds to
all copies of~$H$ in the collection~$\widetilde{G}$ that are present
in~$A$, and $e(\widetilde{G}[A])$ is the number of copies of~$H$ that are
both in the collection~$\widetilde{G}$ and present in~$A$.

Recall that $\pi(H)=\lim_{N\to\infty}{\rm ex}(N,H)\binom{N}{\ell}^{-1}$.

\begin{thm}\label{thm:ffree_cover_heavy}
Let $H$ be an $\ell$-graph with $e(H)\ge2$ and let $\epsilon>0$.
There exists $c>0$ such that the following is true.
Let $N \ge c$.
Let $\widetilde{G} \subset G(N,H)$ with $e(\widetilde{G}) \ge \eps N^{v(H)}$.
Let~$q$ satisfy $N^{-1/m(H)} \le q \le 1/c$.
Then there exists a collection
$\C$ of $\ell$-graphs on vertex set~$[N]$ such that
\begin{itemize}
 \item[(a)]
   for every $\ell$-graph $I \subset [N]^{(\ell)}$ with
   $e(\widetilde{G}[I]) < q^{e(H)} N^{v(H)}$, there exists
   $C\in\C$ with $I\subset C$,
 \item[(b)]
   every $C \in \mathcal{C}$ satisfies
   $e(\widetilde{G}[C]) \le \epsilon N^{ v(H)}$, and
   moreover if $\widetilde{G} = G(N,H)$ then
   $e(C) \le (\pi(H)+\epsilon) {N \choose \ell}$,
 \item[(c)]
   $\log |\mathcal{C}| \leq c q N^\ell \log N$.
 \item[(d)]
   moreover, for every $I$ in (a), there exists
   $T=(T_1,\ldots,T_s)$ where $T_i \subset I$, $s\le c$ and
   $\sum_ie(T_i) \le c q N^\ell$, such that $C=C(T)$,
\end{itemize}
\end{thm}

Theorem~\ref{thm:ffree_cover} follows immediately from
Theorem~\ref{thm:ffree_cover_heavy} by taking $\widetilde{G} = G(N,H)$ and
$q=N^{-1/m(H)}$.

All that remains before applying the container theorem to $G(N,H)$, or more
generally to some dense subgraph $\widetilde{G}$, is to calculate
$\delta(\widetilde{G},\tau)$.

\begin{lem}\label{lem:ffree_delta}
 Let $H$ be an $\ell$-graph with $r=e(H)\ge2$ and let $\gamma\le1$.
 Let $N$ be sufficiently large.
 Let $\widetilde{G} \subset G(N,H)$ with $e(\widetilde{G}) = \alpha N^{v(H)}$
 for some $\alpha>0$. Then
\[
 \delta\big(\widetilde{G},\gamma^{-1}N^{-1/m(H)}\big) \le
 2^{r^2}v(H)!\gamma/\alpha. 
\]
\end{lem}

\begin{proof}
  Consider $\sigma \subset [N]^{(\ell)}$ (so $\sigma$ is both a set of
  vertices of $\widetilde{G}$ and an $\ell$-graph on vertex set~$[N]$). The
  degree of $\sigma$ in $\widetilde{G}$ is at most the number of ways of
  extending $\sigma$ to an $\ell$-graph isomorphic to $H$. If $\sigma$ as
  an $\ell$-graph is not isomorphic to any subgraph of $H$, then clearly
  $d(\sigma)=0$. Otherwise, let $v(\sigma)$ be the number of vertices in
  $\sigma$ considered as an $\ell$-graph, so there exists $V \subset [N]$,
  $|V|=v(\sigma)$ with $\sigma \subset V^{(\ell)}$.  Edges of
  $\widetilde{G}$ containing $\sigma$ correspond to copies of $H$ in
  $[N]^{(\ell)}$ containing $\sigma$, each such copy given by a choice of
  $v(H)-v(\sigma)$ vertices in $[N]-V$ and a permutation of the vertices of
  $H$.  Hence for $N$ sufficiently large,
\[
 d(\sigma) \le v(H)! {N -v(\sigma) \choose v(H)-v(\sigma)} \le v(H)!
 N^{v(H)-v(\sigma)} 
\]

For $v\in V(G)$ and $1\le j\le r$, the quantity $d^{(j)}(v)$ is the maximum of
$d(\sigma)$ over all $\sigma \subset [N]^{(\ell)}$ with
$v \in \sigma$ and $|\sigma| = j$. Thus
\[
 d^{(j)}(v) \le v(H)! N^{v(H)-f(j)},
   \quad \mbox{where } f(j) = \min_{H' \subset H,\, e(H') = j} v(H').
\]
Let $\tau = \gamma^{-1} N^{-1/m(H)}$. Since $\sum_v d^{(1)}(v) = \alpha r
N^{v(H)}$, for $2\le j\le e(H)$ we have
\[
 \delta_j = \frac{\sum_v d^{(j)}(v)}{\tau^{j-1} \alpha r N^{v(H)}}
   \le (v(H)!/r\alpha) \tau^{1-j} N^{\ell-f(j)}
   \le (v(H)!/r\alpha) N^{\ell-f(j)+(j-1)/m(H)} \gamma.
\]
By definition of $f(j)$ and $m(H)$, $\ell-f(j)+(j-1)/m(H)\le0$.
Hence $\delta_j \le (v(H)!/r\alpha) \gamma$ and so
\[
 \delta(G,\tau) = 2^{{r\choose2}-1} \sum_{j=2}^r 2^{-{j-1\choose2}}\delta_j
   \le 2^{r^2} (v(H)!/\alpha) \gamma
\]
as claimed.
\end{proof}

A well-known supersaturation theorem bounds the number of edges in
containers.

\begin{prop}[Erd\H{o}s and Simonovits~\cite{ES}]\label{prop:supersaturation}
Let $H$ be an $\ell$-graph and let $\epsilon > 0$. There exists $N_0$ and
$\eta > 0$ such that if $C$ is an $\ell$-graph on $N \ge N_0$ vertices
containing at most $\eta N^{v(H)}$ copies of $H$ then
$e(C) \le (\pi(H) + \epsilon){N \choose \ell}$.
\end{prop}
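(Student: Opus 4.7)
The plan is to prove the contrapositive: if $C$ has more than $(\pi(H)+\epsilon)\binom{N}{\ell}$ edges, then $C$ contains at least $\eta N^{v(H)}$ copies of $H$, for a suitable constant $\eta>0$ depending only on $H$ and $\epsilon$. The whole argument is a standard double-counting/averaging over $k$-subsets, using the defining property of $\pi(H)$ at a single fixed threshold $k=k_0$.

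First I would apply the definition of $\pi(H)$ to the slack parameter $\epsilon/2$: by $\mathrm{ex}(k,H)/\binom{k}{\ell}\to\pi(H)$, there exists $k_0=k_0(H,\epsilon)$ such that every $\ell$-graph on $k_0$ vertices with more than $(\pi(H)+\epsilon/2)\binom{k_0}{\ell}$ edges contains a copy of~$H$. Fix $k=k_0$ from now on; $v(H)\le k$ provided $k_0$ is chosen that large.

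Next I would choose a uniformly random $k$-subset $S\subset V(C)$ and compute the expectation $\mathbb{E}[e(C[S])]=e(C)\binom{k}{\ell}\binom{N}{\ell}^{-1}>(\pi(H)+\epsilon)\binom{k}{\ell}$. Since $e(C[S])\le\binom{k}{\ell}$ always, the trivial inequality
\[
\mathbb{E}[e(C[S])]\,\le\,p\binom{k}{\ell}+(1-p)(\pi(H)+\epsilon/2)\binom{k}{\ell},
\]
where $p$ is the probability that $e(C[S])>(\pi(H)+\epsilon/2)\binom{k}{\ell}$, rearranges to give $p\ge\epsilon/2$. Hence at least $(\epsilon/2)\binom{N}{k}$ of the $k$-subsets $S$ have $e(C[S])>(\pi(H)+\epsilon/2)\binom{k}{\ell}$, and by choice of~$k_0$ each such $C[S]$ contains a copy of~$H$.

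Finally I would double-count pairs $(S,H')$ where $H'\subset C[S]$ is a copy of $H$. Each copy of $H$ in $C$ extends to exactly $\binom{N-v(H)}{k-v(H)}$ subsets~$S$, so the number of copies of $H$ in $C$ is at least
\[
\frac{(\epsilon/2)\binom{N}{k}}{\binom{N-v(H)}{k-v(H)}}\,=\,\frac{\epsilon}{2}\cdot\frac{N(N-1)\cdots(N-v(H)+1)}{k(k-1)\cdots(k-v(H)+1)}\,\ge\,\eta N^{v(H)}
\]
for any $\eta<\epsilon/(2k^{v(H)})$, provided $N$ is sufficiently large that $(N-v(H))^{v(H)}\ge(1-o(1))N^{v(H)}$ absorbs the constant. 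This gives the required $N_0$ and~$\eta$.

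There is essentially no obstacle here; the only genuine input is the existence of $k_0$ from the definition of $\pi(H)$, and the rest is an averaging argument at a fixed window size independent of~$N$. The one point to watch is that both $k_0$ and $\eta$ must be chosen depending on $H$ and $\epsilon$ but \emph{not} on $N$, which is automatic from the order in which the quantifiers are chosen (fix $k_0$ from $\pi(H)$, then $\eta$ from $k_0$, then $N_0$ large enough that the final asymptotic holds).
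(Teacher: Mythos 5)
Your proof is correct and is the standard averaging argument for supersaturation. Note that the paper does not give its own proof of this proposition at all---it simply cites Erd\H{o}s and Simonovits~\cite{ES}---so there is no in-paper argument to compare against; your proof is essentially the textbook version of the cited result. The quantifier ordering is handled correctly (first $k_0$ from $\pi(H)$ and $\epsilon/2$, then $\eta$ from $k_0$, then $N_0$), the Markov-type inequality yields $p\ge\epsilon/2$ since one may assume $\pi(H)+\epsilon\le 1$ (else the claim is vacuous), and the final double count is exact.
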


\begin{proof}[Proof of Theorem~\ref{thm:ffree_cover_heavy}]
In what follows, $c$ is taken to be sufficiently large
(depending on $\eps$ and~$H$).
Let $\eta=\eta(\eps,H)$ be given by Proposition~\ref{prop:supersaturation},
and let $\beta = \min\{\eps, \eta\}$.
Recall that $r=e(H)$.
Apply Corollary~\ref{cor:sparse_container} to~$\widetilde{G}$ with
$\tau = \sqrt{c} q$ and with $\beta$ playing the role
of~$\eps$ in the corollary. Thus $\sqrt{c}N^{-1/m(H)}\le \tau$, and so
Lemma~\ref{lem:ffree_delta} implies that $\delta(\widetilde{G},\tau) \le
\beta / 12r!$ if $c$ is large. 
Moreover $\tau\le 1/\sqrt c < 1/2$ if $c$ is large.
Hence the conditions of
Corollary~\ref{cor:sparse_container} are satisfied; denote by~$\tilde{c}$
the constant~$c$ appearing in the corollary. 
The collection of containers~$\C$ satisfies the following.
\begin{itemize}
 \item For every $I$ with $e(\widetilde{G}[I]) \le 24 \beta r! r\tau^r
   e(\widetilde{G})$, 
  there exists some $C \in \C$ with $I \subset C$. This implies
  condition~(a) of the present theorem, since $q^{e(H)}N^{v(H)}=(\tau/\sqrt
  c)^rN^{v(H)} \le (\tau/\sqrt c)^r(1/\eps)e(\widetilde{G})
  \le 24 \beta r! r\tau^r e(\widetilde{G})$
  provided that~$c$ is sufficiently large.
  
\item For each $C\in\C$, we have $e(\widetilde{G}[C]) \le \beta
  e(\widetilde{G}) \le \beta N^{v(H)}$. In the case that $\widetilde{G} =
  G(N,H)$, Proposition~\ref{prop:supersaturation} implies $e(C)
  \le(\pi(H)+\epsilon){N \choose \ell}$, because we chose $\beta \le \eta$.
 This gives condition~(b).

\item The size of the collection is $\log |\C| \le \tilde{c} \log(1/\beta)
  \binom{N}{\ell} \tau \log(1/\tau)$, which gives condition~(c), again
  provided that~$c$ is sufficiently large.

\item Finally, for every set $I$ as above, there exists
  $T = (T_1, \ldots, T_s) \in \mathcal{P}(I)^r$ such that $C=C(T)$,
  $|T_i| \le \tilde{c} \tau \binom{N}{\ell}$, and
  $s \le \tilde{c} \log (1/\beta)$. This implies condition~(d) of the present
  theorem, provided that~$c$ is sufficiently large.
\end{itemize}
This completes the proof.
\end{proof}

We now prove the theorems about induced $H$-free graphs that were stated
in~\S\ref{subsec:ihfree}. As mentioned there, the proofs are very similar
to those just given for $H$-free graphs, so we shall sketch the
details. The crucial difference is that we need to consider containers not
in $G(N,H)$ but in another hypergraph that captures induced copies of~$H$.

We already discussed in~\S\ref{subsec:ihfree} how the notion of 2-coloured
multigraphs can help. We say that a 2-coloured $\ell$-multigraph $J$ on
vertex set $[N]$ is {\em entire} if $J_R\cup J_B=[N]^{(\ell)}$. One can
think of $J$ as representing a class of $\ell$-graphs on vertex set~$[N]$, in
each of which the edges of $J_R\setminus J_B$ are present, the edges
of $J_B\setminus J_R$ are absent, but edges in $J_R\cap J_B$ can
be present or absent.

Let $r={v(H) \choose \ell}$. Let $G_{\rm i}(N,H)$ be the $r$-graph whose
vertex set is two copies of $[N]^{(\ell)}$, denoted by $V_R$ and $V_B$
(vertices in $V_R$ correspond to $\ell$-edges and vertices in $V_B$
correspond to non-$\ell$-edges), and whose edges correspond to induced
copies of $H$; thus $f \in (V_R \cup V_B)^{(r)}$ is an edge of $G_{\rm
  i}(N,H)$ whenever $f \cap V_R$ and $f \cap V_B$ are the edges and
non-edges, respectively, of an $\ell$-graph isomorphic to $H$ with vertices
in~$[N]$.  Note that every induced-$H$-free $\ell$-graph $I \subset
[N]^{(\ell)}$ corresponds to an independent set of $G_{\rm i}(N,H)$, namely
the set $I_R\subset V_R$ corresponding to the edges of $I$ together with
the set $I_B\subset V_B$ corresponding to non-edges of~$I$. Observe that,
regarded as a 2-coloured multigraph with edges $I_R$ and $I_B$, $I$ is
entire. In general, every subgraph of $G_{\rm i}(N,H)$ corresponds to a
2-coloured $\ell$-multigraph on vertex set~$[N]$, and if any such subgraph
$C$ contains an independent set $I$ representing an induced $H$-free graph
as just described, then $C$ is entire.

The graph $G_{\rm i}(N,H)$ has very similar properties to those of $G(N,K)$
where $K=K_{v(H)}^{(\ell)}$ is the complete $\ell$-graph on $v(H)$
vertices. In particular, for fixed $\tau$, the $\delta_j$ for $G_{\rm
  i}(N,H)$ differ by only a constant factor from those for $G(N,K)$. Let
$m=m(K)=\left({v(H)\choose \ell}-1\right)/(v(H)-\ell)$. Then
Lemma~\ref{lem:ffree_delta}, or the calculation in its proof, shows that
$\delta(G_{\rm i}(N,H),N^{-1/m}/\epsilon)=O(\epsilon)$. We are now ready to
establish Theorem~\ref{thm:ind_ffree_cover}.

\begin{proof}[Proof of Theorem~\ref{thm:ind_ffree_cover}]
  We mimic the proof of Theorem~\ref{thm:ffree_cover_heavy} by applying
  Corollary~\ref{cor:sparse_container}, but this time to the graph $G_{\rm
    i}(N,H)$. We just noted that $\delta(G_{\rm
    i}(N,H),N^{-1/m}/\epsilon)=O(\epsilon)$, so we can choose
  $\tau=O(N^{-1/m}/\epsilon)$ so that the conditions of
  Corollary~\ref{cor:sparse_container} are satisfied. The corollary yields
  a collection $\C \subset \mathcal{P}(V_R \cup V_B)$, where
  each $C\in\C$ is identified with a 2-coloured $\ell$-multigraph in the
  natural way. The properties of $\C$ claimed in
  Theorem~\ref{thm:ind_ffree_cover} follow directly from those provided by
  the corollary.
\end{proof}

Next we derive Theorem~\ref{thm:ind_ffree_measure}.  To do so, we need a
suitable version of Proposition~\ref{prop:supersaturation}. Recall the
definition of the function $H_p$ given in~\S\ref{subsec:ihfree}.

\begin{lem}[Supersaturation for induced $\ell$-graphs]
\label{lem:ind_supersaturation}
  Let $H$ be an $\ell$-graph and let $0<\epsilon,p<1$. There exists $N_0$
  and $\eta > 0$ such that if $C$ is an entire 2-coloured
  $\ell$-multigraph on $N \ge N_0$ vertices containing at most $\eta
  N^{v(H)}$ copies of $H$ then $H_p(C) \ge (h_p(H) - \epsilon){N \choose
    \ell}$.
\end{lem}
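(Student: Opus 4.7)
The plan is to adapt the averaging proof of the classical Erd\H{o}s--Simonovits supersaturation theorem to the 2-coloured setting. The key preliminary is that the limit defining $h_p(H)$ exists: if $S$ is a uniformly random $M$-subset of $[N]$ and $J$ is a 2-coloured $\ell$-multigraph on $[N]$ with $J_R \cup J_B = [N]^{(\ell)}$, then linearity of expectation gives $\mathbb{E}[H_p(J[S])] = \binom{M}{\ell}\binom{N}{\ell}^{-1} H_p(J)$, while $H \not\subset J$ implies $H \not\subset J[S]$. Applied to an extremal $J$ for $\mbox{hex}(H,N)$, this shows that $\mbox{hex}(H,M)\binom{M}{\ell}^{-1}$ is (essentially) monotone non-increasing, so its limit $h_p(H)$ is well-defined.

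Now fix $\epsilon > 0$ and choose $M$ large enough that $\mbox{hex}(H,M) \geq (h_p(H) - \epsilon/2)\binom{M}{\ell}$. Let $\eta > 0$ be a small parameter to be chosen and let $C$ be a 2-coloured $\ell$-multigraph on $[N]$ with at most $\eta N^{v(H)}$ copies of $H$; we may assume $C_R \cup C_B = [N]^{(\ell)}$ (in the intended application $C$ is a container, for which this holds automatically).

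I would then sample a uniform random $M$-subset $S \subset [N]$. Linearity of expectation gives
\[
\mathbb{E}[H_p(C[S])] \,=\, \frac{\binom{M}{\ell}}{\binom{N}{\ell}}\, H_p(C),
\]
and the expected number of copies of $H$ in $C[S]$ equals $\binom{N-v(H)}{M-v(H)}\binom{N}{M}^{-1}$ times the count in $C$, hence is at most $2\eta M^{v(H)}$ for $N$ large. By Markov, the probability that $C[S]$ contains any copy of $H$ is at most $2\eta M^{v(H)}$. On the complementary event, $H_p(C[S]) \geq \mbox{hex}(H,M) \geq (h_p(H) - \epsilon/2)\binom{M}{\ell}$; and since $-\log_2 p,\ -\log_2(1-p) > 0$ we always have $H_p(C[S]) \geq 0$. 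Combining,
\[
\frac{\binom{M}{\ell}}{\binom{N}{\ell}}\, H_p(C) \,\geq\, \bigl(1 - 2\eta M^{v(H)}\bigr)\,(h_p(H) - \epsilon/2)\,\binom{M}{\ell},
\]
so $H_p(C) \geq (1 - 2\eta M^{v(H)})(h_p(H) - \epsilon/2)\binom{N}{\ell}$. Choosing $\eta$ sufficiently small depending on $M$ and $\epsilon$ makes the right-hand side at least $(h_p(H) - \epsilon)\binom{N}{\ell}$, which is the desired conclusion.

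The only non-routine issue is confirming that $h_p(H)$ is well-defined; this is handled by the same averaging identity, exactly as in the classical proof that Tur\'an densities converge. Once that is in hand, the remainder is a clean transcription of the Erd\H{o}s--Simonovits sampling argument with $H_p$ replacing the edge count and $\mbox{hex}(H,\cdot)$ replacing the Tur\'an number, which is why the authors omit the details.
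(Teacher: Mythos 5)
Your argument is correct and is the Erd\H{o}s--Simonovits sampling argument that the paper indicates but does not write out. One small slip: since $\mbox{hex}(H,\cdot)$ is a \emph{minimum} rather than a maximum, the averaging identity applied to an extremal $J$ on $N\ge M$ vertices gives $\mbox{hex}(H,N)\binom{N}{\ell}^{-1}\ge\mbox{hex}(H,M)\binom{M}{\ell}^{-1}$, so the normalised sequence is non-\emph{decreasing}, not non-increasing; it is still bounded above (by $-\log_2(1-p)$, witnessed by $J_R=\emptyset$, $J_B=[N]^{(\ell)}$, which avoids $H$ whenever $e(H)\ge1$), so $h_p(H)$ exists and the rest of your argument goes through unchanged. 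You are also right to flag the implicit hypothesis $C_R\cup C_B=[N]^{(\ell)}$, which the lemma statement omits but which holds for the containers produced in Theorem~\ref{thm:ind_ffree_cover}, the only place the lemma is applied.
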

\begin{proof}
  Since $H_p(C)\ge 0$ for all $C$, we may that assume $h_p(H)\ge \epsilon$, the
  lemma being otherwise trivial.
  By the definition of $h_p(H)$, there exists some $m\ge1$ such that
  every entire 2-coloured $\ell$-multigraph $D$ on $m$ vertices with
  $H_p(D) < (h_p(H) - \epsilon/2){m \choose \ell}$ contains a copy of~$H$.
  Let $\mathcal{M} \subset [N]^{(m)}$ be the collection of $m$-sets $M$
  such that $C[M]$ contains $H$. Each edge of $C$ appears in $C[M]$ for 
  ${n-\ell\choose m-\ell}$ sets $M\in [N]^{(m)}$. By considering the
  contribution of each edge to $H_p(C)$ and to $H_p(C[M])$ we see that
\begin{align*}
 H_p(C){N-\ell\choose m-\ell}
 =\sum_{M\in[N]^{(m)}} H_p(C[M])
 &= \sum_{M\in\mathcal{M}} H_p(C[M]) + \sum_{M\in[N]^{(m)}-\mathcal{M}} H_p(C[M]) \\
  &\ge 0 + \left({N\choose
      m}-|\mathcal{M}|\right)(h_p(H)-\epsilon/2){m\choose \ell} 
\end{align*}
Suppose that $C$ contains at most $\eta N^{v(H)}$ copies of $H$ for some
$\eta>0$. Each copy of $H$ is contained in ${N-v(H)\choose
  m-v(H)}$ subgraphs $C[M]$, $M \in \mathcal{M}$, and hence
\[
 |\mathcal{M}| \le \eta N^{v(H)} {N-v(H)\choose m-v(H)} \le \eta N^m
\,\le\,{\eps\over2h_p(H)}{N\choose m},
\]
where the last inequality holds if $\eta$ is small and $N$ is large.
Dividing through by ${N-\ell\choose m-\ell}$ in the above expression for
$H_p(C)$, we obtain
\[
 H_p(C) \ge (1-\eps/2h_p(H))(h_p(H)-\epsilon/2){N \choose \ell}
\ge(h_p(H)-\epsilon){N\choose \ell}\,,
\]
as claimed.
\end{proof}

\begin{proof}[Proof of Theorem~\ref{thm:ind_ffree_measure}]
  Recall from~\S\ref{subsec:ihfree} the definition of $\mbox{hex}_p(H,N)$.
  Take an entire 2-coloured $\ell$-multigraph $J$ satisfying $H\not\subset
  J$ and $H_p(J)=\mbox{hex}_p(H,N) = (h_p(H)+o(1)) {N \choose \ell}$. The
  probability that $ G^{(\ell)}(N,p)$ is induced $H$-free is at least the
  probability that $ G^{(\ell)}(N,p)\subset J$, which equals $2^{-H_p(J)}$,
  giving the lower bound in the theorem. Now let $\epsilon>0$ and let
  $\eta$ be given by Lemma~\ref{lem:ind_supersaturation}.  Let $\C$ be the
  collection of 2-coloured $\ell$-multigraphs given by
  Theorem~\ref{thm:ind_ffree_cover} satisfying $|\C|=2^{o(N^\ell)}$ and for
  every $C\in\C$, the number of copies of $H$ in $C$ is at most $\eta
  N^{v(H)}$. Let $\C'\subset\C$ consist of those $C\in\C$ that are
  entire. By Lemma~\ref{lem:ind_supersaturation}, for each $C\in\C'$ we
  have $H_p(C)\ge(h_p(H)-\epsilon){N\choose \ell}$.  Since every
  induced-$H$-free graph on vertex set $[N]$ is contained in some $C\in\C'$,
\[
 \mathbb{P}(G^{(\ell)}(N,p) \mbox{ is induced-$H$-free})
  \le \sum_{C\in\C'} 2^{-H_p(C)}
  \le 2^{-(h_p(H)-\epsilon+o(1)){N\choose \ell}}.
\]
But $\epsilon>0$ was arbitrary and this completes the proof of
Theorem~\ref{thm:ind_ffree_measure}.
\end{proof}

\section{Sparsity}\label{sec:sparse}

In this section we prove Theorem~\ref{thm:ffree_sparse} and related
theorems. We remark once again that there are no further applications of a
container theorem here; we just use Theorem~\ref{thm:ffree_cover} or the
slightly more technical Theorem~\ref{thm:ffree_cover_heavy} together with
some straightforward probabilistic arguments. (In the same way, sparse
arithmetical results such as Theorem~\ref{thm:szem_sparse} and those
obtained in~\cite{STa} follow from a theorem analogous to
Theorem~\ref{thm:ffree_cover} about solution-free sets.)

Note that the condition $p \ge cN^{-1/m(H)}$ in Theorem~\ref{thm:ffree_sparse}
is tight up to the value of~$c$. Indeed, if $p=o(N^{-1/m(H)})$, it is readily
checked that for some subgraph $H'\subset H$ with $m(H')=m(H)$, the expected
number of copies of $H'$ is much less than the number of edges, and removing
very few edges will result in an $H$-free subgraph.

As a further illustration of the paradigm described
in~\S\ref{subsec:sparse}, we prove two other conjectures of Kohayakawa,
{\L}uczak and R\"odl~\cite{KLR}. The first of these has already been
proved, by Conlon and Gowers~\cite{CG} for strictly balanced graphs and by
Samotij~\cite{S2}, following Schacht~\cite{Sch}, for all graphs. It states
that, for non-bipartite~$H$, not only does every $H$-free subgraph $I$ of a
random graph have at most $(1+o(1))p\pi(H){N \choose 2}$ edges, but in the
case that $I$ has close to $p \pi(H){N\choose2}$ edges, it can be made
$(\chi(H)-1)$-partite by removing a small number of edges.

\begin{thm}\label{thm:klr_conj}
Let $H$ be a $2$-graph with $\pi(H) > 0$ and let $0<\gamma<1$.
There exist constants $\epsilon,c>0$ such that for $N$ sufficiently large and
for $p \ge cN^{-1/m(H)}$,
the following is true. Let $E_0$ be the event that there
exists an $H$-free subgraph $I\subset G(N,p)$ with
$e(I)\ge(1-\frac{1}{\chi(H)-1}-\epsilon)p{N\choose2}$ which cannot be made
$(\chi(H)-1)$-partite by removing at most $\gamma p{N\choose2}$ edges. Then
$
 \mathbb{P}(E_0) \le \exp\{-\epsilon^2 p {N \choose 2} \}.
$
\end{thm}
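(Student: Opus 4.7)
The plan is to follow the sparsity paradigm of \S\ref{subsec:sparse}: apply Theorem~\ref{thm:ffree_cover} to replace the random-graph statement by a question about individual containers, invoke the (dense) Erd\H{o}s-Simonovits stability theorem on each container, and assemble the per-container bounds using clause~(b) of the container theorem to carry out an efficient union bound over generating tuples.

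First I would choose $\epsilon_1 = \epsilon_1(\gamma,H) > 0$ sufficiently small and apply Theorem~\ref{thm:ffree_cover} with parameter $\epsilon_1$, producing a collection $\C$ and a container function $C(T)$ such that every $H$-free graph on $[N]$ lies in some $C(T)\in\C$, each $C\in\C$ contains at most $\epsilon_1 N^{v(H)}$ copies of $H$ and satisfies $e(C)\le(\pi(H)+\epsilon_1)\binom{N}{2}$, and each $C$ is specified by a tuple $T=(T_1,\ldots,T_s)\subset I$ with $\sum_i|T_i|\le c'N^{2-1/m(H)}$ for a constant $c'=c'(\epsilon_1)$. By the graph removal lemma, from each $C$ one may delete $o(N^2)$ edges to produce an $H$-free subgraph $C^\star$. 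Call $C$ \emph{dense} if $e(C^\star)\ge(\pi(H)-\sqrt{\epsilon_1})\binom{N}{2}$ and \emph{sparse} otherwise. For each dense $C$, the classical Erd\H{o}s-Simonovits stability theorem supplies a vertex partition $V_1^C\sqcup\cdots\sqcup V_{\chi(H)-1}^C$ such that the set $B_C$ of edges of $C$ with both endpoints in a common part satisfies $|B_C|\le\delta\binom{N}{2}$, where $\delta=\delta(\epsilon_1)\to 0$ as $\epsilon_1\to 0$. Fix these partitions once and for all.

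For a sparse container $C$, Chernoff's inequality gives $|E(G(N,p))\cap E(C)|\le(\pi(H)-\tfrac12\sqrt{\epsilon_1})p\binom{N}{2}$ with failure probability at most $\exp\{-\Omega(\epsilon_1 p\binom{N}{2})\}$; for $\epsilon<\tfrac12\sqrt{\epsilon_1}$ this already rules out any $I\subset C\cap G(N,p)$ with $e(I)\ge(\pi(H)-\epsilon)p\binom{N}{2}$, so a sparse $C$ cannot witness $E_0$. For a dense $C$, if $I\subset C\cap G(N,p)$ witnesses $E_0$ then the partition $\{V_i^C\}$ must in particular fail to $(\chi(H)-1)$-colour $I$ up to $\gamma p\binom{N}{2}$ removals, so $|E(G(N,p))\cap B_C|\ge|E(I)\cap B_C|>\gamma p\binom{N}{2}$; since $\mathbb{E}|E(G(N,p))\cap B_C|\le p\delta N^2$, choosing $\epsilon_1$ (hence $\delta$) small enough relative to $\gamma$ and applying Chernoff yields a failure probability of $\exp\{-\Omega(\gamma p\binom{N}{2})\}$ for this event.

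The main obstacle is the final union bound: summing directly over $C\in\C$ loses a factor $\exp\{O(N^{2-1/m(H)}\log N)\}$, which at the threshold $p\sim N^{-1/m(H)}$ would swamp the Chernoff savings. The resolution, and the reason clause~(b) of Theorem~\ref{thm:ffree_cover} is emphasised, is to union-bound over generating tuples rather than over $\C$: any container that can host a counterexample $I\subset G(N,p)$ is of the form $C(T)$ for some $T\subset E(G(N,p))$ with $\sum_i|T_i|\le c'N^{2-1/m(H)}$, so
\[
\mathbb{P}(E_0)\;\le\;\sum_T p^{|T|}\exp\bigl\{-\Omega(\gamma p\tbinom{N}{2})\bigr\},
\]
the sum running over tuples of subsets of $[N]^{(2)}$ of total size at most $c'N^{2-1/m(H)}$. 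An entropy estimate of Lemma~\ref{lem:entropy} type bounds $\sum_T p^{|T|}$ by $\exp\{O(c'N^{2-1/m(H)}\log(1/p))\}$; choosing the constant $c$ in the hypothesis $p\ge cN^{-1/m(H)}$ sufficiently large in terms of $c'$ and $\gamma$ absorbs this factor into the Chernoff exponent, and then taking $\epsilon$ small in terms of $\gamma$ and the Erd\H{o}s-Simonovits stability constants yields $\mathbb{P}(E_0)\le\exp\{-\epsilon^2 p\binom{N}{2}\}$ as required.
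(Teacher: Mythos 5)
Your proposal is essentially the paper's proof: apply Theorem~\ref{thm:ffree_cover}, classify containers by density, use Erd\H{o}s--Simonovits stability on the dense ones to produce small ``bad'' edge sets $B_C$, bound the deviation by Chernoff, and union-bound over generating tuples while paying $p^{|T|}$ for the requirement $T\subset E(G(N,p))$. The paper packages this last probabilistic step into the reusable Lemma~\ref{lem:sparse} (applied once to each of the two classes $\mathcal{I}_1,\mathcal{I}_2$), which also handles the mild dependence, elided in your sketch, between the event $T\subset X$ and the deviation event by working with $D(T)\setminus J(T)$; and it uses a directly stated sparse stability principle (for graphs with few copies of $H$) rather than routing through the removal lemma and then dense stability, which sidesteps the fact that your dense/sparse threshold $\pi(H)-\sqrt{\epsilon_1}$ is not correctly keyed to the removal-lemma rate and would need to be re-parametrised — but these are presentational rather than conceptual differences.
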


The dense ($p=1$) version of this theorem is the stability theorem of
Erd\H{o}s and Simonovits~\cite{E1,E2,Sim}; indeed this theorem states that
{\em every} sufficiently dense $H$-free graph $I$ can be made
$(\chi(H)-1)$-partite in the way described. Theorem~\ref{thm:klr_conj} is
therefore the assertion that a similar phenomenon holds with high
probability in sparser random graphs.

The other conjecture from~\cite{KLR}, sometimes known as the K\L{R}
conjecture, has a more technical statement. Let $G$ be a graph. For $U,W
\subset V(G)$, write $E_G(U,W) \subset E(G)$ for the set of edges of $G$
with one vertex in~$U$ and one vertex in~$W$.  Let $e_G(U,W)=|E_G(U,W)|$
and write $d_G(U,W)=e_G(U,W)/(|U||W|)$ for the \emph{edge density}.  For
$0<\eta,p\le1$, say that the pair $(U,W)$ is \emph{$(\eta,p)$-regular} if
for every $U'\subset U$ with $|U'|\ge \eta|U|$ and $W'\subset W$ with
$|W'|\ge\eta|W|$, the edge density satisfies
\[
 |d_G(U',W')-d_G(U,W)|\le\eta p.
\]
This extends the notion of regularity to sparse graphs of density~$p$.

Let $H$ be a graph on vertex set $[h]$. In what follows, $V_1,\ldots, V_h$
is a partition of $[N]=[hn]$, where each part has size $|V_i|=n$. Let $G$
be a graph on vertex set~$[N]$.  We say that $G$ is {\em
  $(H,\eta,p)$-regular} if for every pair $(V_i,V_j)$ with $\{i,j\}\in
E(H)$, the bipartite subgraph of~$G$ between $V_i$ and $V_j$ is
$(\eta,p)$-regular. A {\em canonical copy} of~$H$ in $G$ (whether regular
or not) is a set of vertices $v_1, \ldots, v_h$ with $v_i\in V_i$ such that
$\{v_i,v_j\}$ is an edge of~$G$ whenever $\{i,j\}\in E(H)$; we say that
such a copy of $H$ is {\em aligned} to the partition
$V_1,\ldots,V_h$. Denote by $i_H(G)$ the number of canonical copies of $H$
in~$G$. We say that $G$ is {\em $H$-free} if it does not contain any
canonical copies of~$H$; that is, $i_H(G)=0$.  Finally, denote by
$G=G(n,M,H)$ a graph chosen uniformly at random from all $h$-partite graphs
with parts $V_1,\ldots,V_h$, having $e_G(V_i,V_j)=M$ if $\{i,j\}\in E(H)$
and $e_G(V_i,V_j)=0$ otherwise.

We shall be interested in whether $G=G(n,M,H)$ is $(H,\eta,p)$-regular, where
we shall always take $p = M / n^2$. (This may seem
like a strict requirement in the definition of regularity. However,
the value of~$p$ does not matter up to a constant
factor, since it is only the value of~$\eta p$ that is used in the definition,
and so~$\eta$ may be adjusted appropriately.)
In the case when $M=\Omega(n^2)$, that is, when the graph $G$ is dense, and
in addition~$G$ is $(H,\eta,p)$-regular, then the well-known embedding lemma
states that~$G$ must contain a canonical copy of~$H$. We would like to extend
this to the sparse case, when $p \ll 1$. 
In fact,
{\L}uczak~\cite{KR} showed that when $p=o(1)$, then there exist graphs~$G$
that are $(H,\eta,p)$-regular and are $H$-free.
The K{\L}R conjecture states that although such examples exist, there are
very few of them; few enough so that a typical random graph does not contain
any such example, and thus with high probability every $(H,\eta,p)$-regular
subgraph of a random graph contains a canonical copy of~$H$. Even this
claim will fail if $p$ is really small, and indeed examples similar to
those mentioned earlier show that we must require $p \gg n^{-1/m(H)}$. 

The counting lemma is a strengthening of the embedding lemma; it says that,
for constant~$p$, small $\eta$ and large~$n$, we have not just $i_H(G)>0$
but $i_H(G)=(1+o(1))n^{v(H)} p^{e(H)}$. One could hope to generalize the
counting lemma too to the sparse setting.  The hypergraph container methods
do not seem appropriate for establishing such a precise count, but
nonetheless they are enough to establish something weaker, namely that
there are very few $(H, \eta, p)$-regular graphs with $o(n^{v(H)}
p^{e(H)})$ copies of~$H$.

The next theorem verifies the K{\L}R conjecture, and further gives the
analogous result for the weak counting lemma.

\begin{thm}\label{thm:klr3}
  Let $H$ be a graph and let $\alpha>0$. There exists $c>0$, such
  that for~$n$ sufficiently large and $M \ge cn^{2-1/m(H)}$, if
  $G=G(n,M,H)$ is chosen at random, then
$$
  \mathbb{P}\left(\mbox{ $G$ is }(H\,,\,{1\over c}\,,\,{M\over
    n^2})\mbox{-regular \ and \ }
    i_H(G)\le {1\over c}\,n^{v(H)}\left(M\over n^2\right)^{e(H)} \right) \le
  \alpha^M.
$$
\end{thm}

This theorem with the stronger constraint $i_H(G)=0$ is what is often
referred to as the K{\L}R conjecture; it was proved for balanced $H$ by
Balogh, Morris and Samotij~\cite{BMS}. As
mentioned, the weak counting lemma for $(H, \eta, p)$-regular subgraphs of
a random graph follows from Theorem~\ref{thm:klr3} by the union bound over
all possible bad subgraphs.  Conlon, Gowers, Samotij and
Schacht~\cite{CGSS} proved this lemma directly (i.e., they showed that the
number of~$H$ in every $(H,\eta,p)$-regular subgraph has the correct order
of magnitude with high probability); moreover, for strictly balanced~$H$,
they obtained the precise counting lemma (i.e., the number
of~$H$ is $(1+o(1))n^{v(H)} p^{e(H)}$ with high probability).

We turn now to the proofs of the theorems. As mentioned, the derivation of
the theorems is straightforward once an appropriate container theorem is
available, and the arguments here are routine (similar to those
in~\cite{BMS}), but we include details for completeness.

We already indicated in~\S\ref{subsec:sparse} how to prove
Theorem~\ref{thm:ffree_sparse}. Each $H$-free graph $I$ contains no more
than $(\pi(H) +o(1)){N \choose \ell}$ edges, and with high probability
$G^{(\ell)}(N,p)$ contains not much more than $(\pi(H) +o(1))p{N \choose
  \ell}$ of these edges. This is not in itself enough to prove
Theorem~\ref{thm:ffree_sparse} because there are too many independent
sets. But the argument is valid with containers $C$ instead of independent
sets~$I$, and the theorem then does follow via the union bound, because
there are few containers.

Very similar arguments are used to prove Theorems~\ref{thm:klr_conj} and
Theorem~\ref{thm:klr3}. For Theorem~\ref{thm:klr_conj} we show that the
containers that matter are close to being $(\chi(H)-1)$-partite, so a
randomly chosen subgraph has the same property. For Theorem~\ref{thm:klr3}
we show that the containers must contain a very sparse subset, from which
$G=G(n,M,H)$ is very unlikely to have chosen many edges, and hence is
unlikely to be regular. The union bound then finishes the job.

For each application we need the following probabilistic lemma. (Strictly
speaking, Theorem~\ref{thm:klr3} uses a modification where the
hypergeometric distribution is used instead of the binomial.)  This lemma
is the place where the condition $T\subset I$ appearing in the container
theorem actually matters, and so is important for that reason. It is
phrased in a slightly cumbersome way because of the need to cover each of our
required applications, but the principle is simple. If a randomly chosen
subset $X\subset V(G)$ contains an unexpectedly large independent set, then
$X$ meets some container $C(T)$ in more vertices than expected. This event
is unlikely for two independent reasons: it requires both that $T\subset X$
and that $X\cap (C(T)-T)$ be large. Both of these contribute to making the
overall probability small.

\begin{lem}\label{lem:sparse}
  Given $0<\nu<1$ and $s\ge1$, there is a constant $\phi=\phi(\nu,s)$ such
  that the following holds.  Let $L$ be a set, $|L|=n$, and let
  $\mathcal{I} \subset \mathcal{P}(L)$.  Let $t\ge1$, let $\phi t/n \le p \le1$
  and let $\nu n/2\le d \le n$.  Suppose for each $I\in\mathcal{I}$ there
  exists both $T_I=(T_1,\ldots,T_{s'})\in\mathcal{P}(I)^{s'}$ and
  $D=D(T_I)\subset L$, where $s' \le s$, $\sum_i|T_i| \le t$ and $|D(T_I)|
  \le d$.  Let $X\subset L$ be a random subset where each element is chosen
  independently with probability $p$. Then
\begin{equation}\label{eqn:sparse}
\mathbb{P}\left( |D(T_I) \cap X|>(1+\nu)pd\mbox{ for some }
  I \subset X,\, I \in \mathcal{I} \right)
  \le \exp\{ -\nu^2 pd/32 \}.
\end{equation}
\end{lem}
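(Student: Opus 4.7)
The plan is to apply a union bound over the tuples $T$ rather than over the sets $I$, exploiting the fact that for each $I\in\mathcal{I}$ with $I\subset X$ the associated tuple $T_I$ lies in $\mathcal{T}:=\{(T_1,\ldots,T_{s'}):s'\le s,\ T_i\subset M,\ \sum_i|T_i|\le t\}$ and satisfies $U_{T_I}\subset I\subset X$, where I write $U_T:=\bigcup_iT_i$. Hence
\[
 \mathbb{P}(\text{the bad event in }(\ref{eqn:sparse})) \;\le\; \sum_{T\in\mathcal{T}} \mathbb{P}\bigl(U_T\subset X\ \text{and}\ |D(T)\cap X|>(1+\nu)pd\bigr).
\]

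For a fixed $T$ I would bound each summand by combining a product-structure observation with Chernoff. The event $\{U_T\subset X\}$ depends on the coordinates of $X$ inside $U_T$, whereas $|D(T)\cap(X\setminus U_T)|$ depends on disjoint coordinates, so the two are independent. Moreover $|D(T)\cap X|\le|U_T|+|D(T)\cap(X\setminus U_T)|\le t+|D(T)\cap(X\setminus U_T)|$, and $|D(T)\cap(X\setminus U_T)|$ is stochastically dominated by $\mathrm{Bin}(d,p)$. Choosing $\phi\ge 4/\nu^2$ ensures $t\le \nu pd/2$ (since $t\le pn/\phi$ and $d\ge \nu n/2$), so the bad event forces this binomial to exceed $(1+\nu/2)pd$; the standard Chernoff bound then yields probability at most $\exp(-\nu^2 pd/12)$. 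Together with $\mathbb{P}(U_T\subset X)=p^{|U_T|}$ this gives
\[
 \mathbb{P}\bigl(U_T\subset X,\ |D(T)\cap X|>(1+\nu)pd\bigr)\le p^{|U_T|}\exp(-\nu^2 pd/12).
\]

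Next I would estimate $S:=\sum_{T\in\mathcal{T}}p^{|U_T|}$ by a generating-function argument in the spirit of Lemma~\ref{lem:entropy}: each $v\in M$ independently picks the subset of $[s']$ of indices $i$ with $v\in T_i$, so for each fixed $s'$ the joint generating function for $(|U_T|,\sum_i|T_i|)$ factors as $(1+p((1+x)^{s'}-1))^n$. Bounding $(1+x)^{s'}-1\le 2^s x$ for $x\le 1$ and extracting the coefficients of $x^j$, $j\le t$, with the near-optimal choice $x=t/(p2^s n)$, gives
\[
 S\;\le\; s\exp\!\bigl(t+t\log(p2^s n/t)\bigr).
\]
Multiplying by the Chernoff bound and using $pd\ge \nu pn/2\ge \nu\phi t/2$, the entropy term is dwarfed by $\nu^2 pd/12$ once $\phi$ satisfies an inequality of the shape $\phi\ge c(s)\nu^{-3}(1+\log\phi)$, which has a solution $\phi=\phi(\nu,s)$. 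The remaining slack yields the exponent $\nu^2 pd/32$ in the conclusion.

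The step I expect to be the main obstacle is the last balancing: we need $\phi$ to absorb simultaneously the logarithm $\log(pn/t)$, the combinatorial factor $2^s$ coming from having $s'\le s$ sets, and the loss $1/12-1/32$ between Chernoff and the target exponent, all while remaining independent of $n,p,d,t$. The reason this works is that the Chernoff exponent $\nu^2 pd$ grows linearly in $\phi t$ through the hypothesis $p\ge \phi t/n$ and $d\ge\nu n/2$, so any sufficiently large $\phi=\phi(\nu,s)$ suffices; the choice does not interact with the other parameters.
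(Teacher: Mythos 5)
Your proposal follows essentially the same route as the paper's proof: union bound over the generating tuples $T$, split each event into the independent parts $\{U_T\subset X\}$ and a Chernoff estimate on $|D(T)\cap(X\setminus U_T)|$ (using $t\le\nu pd/2$ to shift the threshold to $(1+\nu/2)pd$), then bound $\sum_T p^{|U_T|}$ and absorb the resulting entropy term by taking $\phi$ large. The only cosmetic difference is that you estimate $\sum_T p^{|U_T|}$ via a generating function, whereas the paper bounds it directly as $\sum_{i\le t}\binom{n}{i}(2^s p)^i\le(t+1)(ne2^sp/t)^t$; both yield the same quantitative conclusion.
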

\begin{proof}
Consider $I\in\mathcal{I}$ and $T=T_I=(T_1,\ldots,T_{s'})$.
Let $J(T) = T_1 \cup \cdots \cup T_{s'}$. Let $E_T$ be the event that
\[
 J(T) \subset X \quad\mbox{and}\quad |D(T)\cap X| \ge (1+\nu)pd.
\]
The event $E_T$ is contained in $F_T\cap G_T$, where
$F_T$ is the event that $J(T) \subset X$ and $G_T$ is the event that
$|(D(T) - J(T))\cap X| \ge (1+\nu)pd-|J(T)|$. Since $F_T$ and $G_T$
are independent, $\mathbb{P}(E_T)\le\mathbb{P}(F_T)\mathbb{P}(G_T)$.
Choose a set $D'\supset D(T)$ with $|D'|=d$. Note that $|J(T)|\le t\le
pn/\phi \le2pd/\phi\nu\le \nu pd/2$ if $\phi$ is large. Hence, using
standard estimates for the binomial random variable $\mbox{Bin}(n,p)$ 
(e.g.,~\cite[Corollary 2.3]{JLR}),
\begin{align*}
 \mathbb{P}(G_T) &= \mathbb{P}(|(D(T) - J(T))\cap X| \ge
 (1+\nu)pd-|J(T)|)\\
 &\le\mathbb{P}(|D'\cap X| \ge (1+\nu)pd-|J(T)|)\\
 &\le\mathbb{P}(|D'\cap X| \ge (1+\nu/2)pd)
 \,=\,\mathbb{P}(\mbox{Bin}(d,p) \ge (1+\nu/2)p d)\\
 &\le\exp\{ -\nu^2 pd/16 \}.
\end{align*}
Note that $\mathbb{P}(F_T)=p^{|J(T)|}$. Given some set $J\subset L$ with
$|J|=j$, there are at most $2^{sj}$ tuples $T$ such that $J(T)=J$,
because, for each $i\in J$, there are at most $2^s$ ways to specify which of
the subsets $T_1,\ldots,T_{s'}$ contain~$i$.  Let $x=pn/t\ge \phi$, so
$t\le 2pd/x\nu$.  If $\phi$ is large we may assume $p(n-t)>t$, so, summing
over the possible sizes of~$J$,
$$
 \sum_T \mathbb{P}(F_T)
   \le \sum_{j=0}^{t} {n \choose j} 2^{sj} p^j 
   \le (t+1) \left(\frac{ne2^sp}{t}\right)^{t}
   \le (xe^22^s)^t \le(xe^22^s)^\frac{2pd}{x\nu}
\le\exp\{ \nu^2 pd/32\}
$$
holds if $\phi$, and therefore $x$, is large.
If there exists $I\subset X$, $I\in\mathcal{I}$ with
$|D(T_I)\cap X|\ge(1+\nu)pd$, then the event $E_{T_I}$ holds.
Hence the probability in~(\ref{eqn:sparse}) is bounded by
\[
 \sum_{T} \mathbb{P}(F_T)\mathbb{P}(G_T) \le
 \exp\{ \nu^2 pd/32\}\exp\{ -\nu^2 pd/16 \}\le\exp\{ -\nu^2 pd/32 \}
\]
as claimed.
\end{proof}

\begin{proof}[Proof of Theorem~\ref{thm:ffree_sparse}]
Let $\mathcal{I}$ be the set of $H$-free $\ell$-graphs on vertex set~$[N]$.
Let $\epsilon=\gamma/4$ and $L=[N]^{(\ell)}$.
For $I\in\mathcal{I}$, let $T=T_I$, $C=C(T)$ and $c'=c(H,\epsilon)$ be given by
Theorem~\ref{thm:ffree_cover}.
Our aim is to apply Lemma~\ref{lem:sparse} with $D(T)=C(T)$ and
\[
 \nu = \gamma/2, \quad
 d = (\pi(H)+\epsilon){N \choose \ell}, \quad
 s = c', \quad
 t = c' N^{\ell-1/m(H)}.
\]
The conditions of Lemma~\ref{lem:sparse} then hold with $n={N \choose \ell}$,
noting that $d\ge \nu n/2$ and that $p\ge cN^{-1/m(H)}\ge \phi t/n$ if $c$
is large enough. Finally, note that in ($\ref{eqn:sparse}$), each $H$-free
$\ell$-graph $I\in\mathcal{I}$ is contained in $C(T_I)$ and $(1+\nu)pd
\le (\pi(H)+\gamma)p{N \choose \ell}$, so the probability in the statement of
the theorem is bounded by
\[
 \exp\{-\nu^2pd/32\}
  \le \exp\left\{-\gamma^3  p{N \choose \ell} / 512 \right\},
\]
completing the proof.
\end{proof}

\begin{proof}[Proof of Theorem~\ref{thm:klr_conj}]
Notice that $\pi(H)=1-1/(\chi(H)-1)$ and $\chi(H)\ge3$.
It is a standard exercise, either using the stability arguments of
Erd\H{o}s and Simonovits or using Szemer\'edi's regularity lemma, that there
exists $\epsilon>0$ such that if $C$ is a $2$-graph on vertex set $[N]$
for $N$ sufficiently large with
$e(C)\ge(1-\frac{1}{\chi(H)-1}-11\epsilon){N\choose 2}$ and such that $C$
contains at most $\epsilon{N \choose v(H)}$ copies of $H$, then there exists
a subgraph $F\subset C$ of size $e(F) \le (\gamma/2){N \choose 2}$ such that
$C-F$ is $(\chi(H)-1)$-partite. We may and shall assume that $\epsilon\le1/66$
and $65\epsilon^2\le\gamma^3$. 

(A word of explanation is included here for those not so familiar with such
arguments. In the case that $C$ is $H$-free, the assertion just made is
precisely the stability theorem, as we mentioned after the statement of
Theorem~\ref{thm:klr_conj}. The stability proof is readily adapted to the
present situation where $C$ contains few copies of $H$. An alternative
approach uses Szemer\'edi's regularity lemma and the original stability
theorem, though the constants involved are much larger. In this argument,
the graph is partitioned by the regularity lemma, and the reduced graph,
whose vertices represent the parts of the partition and whose edges
represent regular pairs of positive density, must itself have density at
least $1-\frac{1}{\chi(H)-1}-11\epsilon$. By a counting lemma very like
Lemma~\ref{lem:weak_counting} below, the reduced graph cannot contain
$K_{\chi(H)}$, and the stability theorem applied to the reduced graph then
shows the reduced graph is close to $(\chi(H)-1)$-partite. Hence the same
holds for~$C$ itself.)

The argument is now roughly as follows. An $H$-free subgraph of $G(N,p)$
must lie in some container~$C$. If the subgraph has size larger than
$p(1-\frac{1}{\chi(H)-1}-\epsilon){N\choose 2}$ then it is unlikely that
$C$ has size smaller than $(1-\frac{1}{\chi(H)-1}-11\epsilon){N\choose
  2}$. But if $C$ has size larger than this then it can be made
$(\chi(H)-1)$-partite by removing few edges, and the same will then likely
be true of the random subgraph.

Let $\mathcal{I}$ be the set of $H$-free graphs on vertex set $[N]$.
For $I\in\mathcal{I}$ let $T=T_I$, $C=C(T)$ and $c'=c(H,\epsilon)$ be given by
Theorem~\ref{thm:ffree_cover} with $\epsilon$ as above. Let
\begin{align*}
 \mathcal{I}_1 &= \left\{I \in \mathcal{I} :
    e(C(T_I))\ge\left(1-\frac{1}{\chi(H)-1}-11\epsilon\right){N\choose 2}
  \right\}, \\ 
 \mathcal{I}_2 &= \mathcal{I} - \mathcal{I}_1.
\end{align*}
For $I\in\mathcal{I}_1$ let $F=F(T_I)\subset C(T_I)$ be as above, so that
$C(T_I)-F(T_I)$ is $(\chi(H)-1)$-partite.

Let $X=G(N,p)$. Let $E_1$ be the event that there exists $I\subset X$,
$I\in\mathcal{I}_1$ such that $|F(T_I)\cap X|\ge\gamma p{N\choose2}$.
Let $E_2$ be the event that there exists $I\subset X$, $I\in\mathcal{I}_2$
such that $|C(T_I)\cap X| \ge (1-\frac{1}{\chi(H)-1}-\epsilon)p{N\choose2}$.
Observe that $E_0 \subset E_1 \cup E_2$.

The probability of $E_2$ is bounded by applying Lemma~\ref{lem:sparse}
to the collection $\mathcal{I}_2$, with 
$L=[N]^{(2)}$, $n={N\choose2}$,
$D(T_I)=C(T_I)$, $\nu=10\epsilon$,
$d=(1-\frac{1}{\chi(H)-1}-11\epsilon){N\choose2}\ge\frac{1}{3}{N\choose2}$,
$s=c'$ and $t=c'N^{2-1/m(H)}$; provided
$p \ge cN^{-1/m(H)}$ and $c,N$ are sufficiently large,
\[
 \mathbb{P}(E_2)
   \le \exp\left\{ -\nu^2 pd / 32\right\}
   \le \exp\left\{ -25\epsilon^2 p {N\choose2} / 24\right\}.
\]

The probability of $E_1$ is bounded by applying Lemma~\ref{lem:sparse}
to the collection $\mathcal{I}_1$, with $D(T_I)=F(T_I)$, $\nu=\gamma$,
$d=(\gamma/2){N\choose2}$, $s=c'$ and $t=c'N^{2-1/m(H)}$; provided
$p \ge cN^{-1/m(H)}$ and $c,N$ are sufficiently large,
\[
 \mathbb{P}(E_1)
   \le \exp\left\{ -\nu^2 pd / 32\right\}
   = \exp\left\{ - p\gamma^3 {N\choose2} / 64\right\}
   \le \exp\left\{ - 65p\epsilon^2 {N\choose2} / 64\right\}.
\]
Since $\mathbb{P}(E_0)\le\mathbb{P}(E_1)+\mathbb{P}(E_2)$ and
$pN^2$ is large, this completes the proof of Theorem~\ref{thm:klr_conj}.
\end{proof}

In order to prove Theorem~\ref{thm:klr3}, we use a slight variation of a
standard counting lemma. It says that containers with few canonical copies
of $H$ must contain a bipartite subgraph that has a substantial number of
vertices but is nevertheless very sparse.

\begin{lem}
 \label{lem:weak_counting}
 Let $H$ be a graph and let $f : (0,1) \to (0,1)$.  Then there exists $\eta>0$
 and $\eps>0$ such that the following is true.  Let $C$ be a graph of order
 $N=nh$, where $h=v(H)$, whose vertices are partitioned into sets
 $V_1,\ldots,V_h$ each of size~$n$. Suppose $i_H(G)< \eps N^h$.  Then
 there exists $\gamma\ge \eta$, $\{i,j\} \in E(H)$ and $A \subset V_i$, $B
 \subset V_j$, of size $|A|, |B| = \gamma n$, such that $e(C[A,B]) \le
 f(\gamma) n^2$.
\end{lem}
\begin{proof}
Let $\delta_0 =1$ and, for $i=1,2,\ldots,h$, define
\[
\eps_i=\frac{1}{2h}\prod_{k<i} \delta_k\qquad\mbox{and}\qquad
 \delta_i = f(\eps_i).
\]
Let $\eta=\eps_h$ and $\eps=\prod_{i=1}^h\eps_i$.
The following process generates canonical copies of~$H$ with vertices labelled
by $v_1, \ldots, v_h$.
Let $V_{i,1} = V_i$ for $i\in[h]$. For each $i=1, \ldots, h$, do the
following.
\begin{enumerate}
 \item For each $j > i$ such that $\{i,j\} \in E(H)$, let
 \[
  A_{i,j} = \{ v \in V_{i,i} : |V_{j,i} \cap N(v)| < \delta_i |V_{j,i}| \}.
 \]
 \item Select $v_i$ from $V_{i,i} \setminus \bigcup_{j > i : \{i,j\} \in E(H)} A_{i,j}$.
  (If no such vertex exists then stop.)
 \item For each $j>i$, let $V_{j,i+1} \subset V_j$ be a set of size
   $\delta_i |V_{j,i}|$, 
 chosen arbitrarily from $V_{j,i} \cap N(v_i)$ if $\{i,j\} \in E(H)$, and
 otherwise 
 chosen arbitrarily from $V_{j,i}$. 
\end{enumerate}
Note that
\[
 |V_{j,i}|
  = \delta_{i-1} |V_{j,i-1}|
  = \delta_{i-1} \delta_{i-2} |V_{j,i-2}|
  = \cdots
  = \Big( \prod_{k<i} \delta_k \Big) n
  = 2h\eps_in
  = 2\eps_i N.
\]
Now if $|A_{i,j}| \le |V_{i,i}|/2h$ for every $\{i, j\} \in E(H)$, then the
number of choices for $v_i$ is at least
\[
 \Big| V_{i,i} \setminus \bigcup_{j > i : \{i,j\} \in E(H)} A_{i,j} \Big|
   \ge |V_{i,i}|/2 \ge \eps_i N\,,
\]
giving at least $\eps N^h$ canonical copies of~$H$, a contradiction.
Thus there exists $\{i, j\} \in E(H)$ with $|A_{i,j}| \ge |V_{i,i}|/2h=\eps_in$.
Then putting $\gamma = \eps_i\ge\eta$ and taking
$A \subset A_{i,j}$ and $B \subset V_{j,i}$ of size $|A|, |B| = \gamma n$
gives a pair of sets with
$
 e(C[A,B])
  \le |A| \delta_i |V_{j,i}|
  \le \delta_i n^2
  = f(\gamma)n^2
$
as required.
\end{proof}

\begin{proof}[Proof of Theorem~\ref{thm:klr3}]
  The argument is broadly this. Each $G(n,M,H)$ with relatively few
  canonical copies of $H$ must lie in a container $C$ which itself has
  few copies. Lemma~\ref{lem:weak_counting} states that $C$ has a very
  sparse bipartite subgraph, from which $G(n,M,H)$ is very unlikely to pick
  up many edges. But if $G(n,M,H)$ fails to pick up such edges it will
  fail to be regular. It is a crucial feature of the argument that, in
  order that $\alpha$ can be as small as we like, the bipartite subgraph
  can be made as sparse as we like whilst not being too small. This
  accounts for the appearance of Lemma~\ref{lem:weak_counting}.

  Here then are the details. We shall take $c$ to be sufficiently large as
  necessary. Let $N=hn$, let $p=M/n^2$ and let $X=G(n,M,H)$. For ease of
  notation we shall often identify graphs with their edge sets.  Define
  $f:(0,1) \to (0,1)$ by $2f(\gamma) = (\alpha/4)^{2/\gamma^2}$.  Let
  $\eps, \eta>0$ be given by Lemma~\ref{lem:weak_counting}.  Let
  $\widetilde{G} \subset G(N,H)$ be the set of canonical copies of~$H$
  (that is, the $n^h = N^h / h^h$ copies whose vertices are aligned to the
  partition $V_1 \cup \cdots \cup V_h$ of $[N]$.)  Reduce $\eps$ if
  necessary so that $e(\widetilde{G})\ge\eps N^h$.  Choose $\tilde{c}\ge2$
  larger than $c(H,\eps)$ as given by Theorem~\ref{thm:ffree_cover_heavy},
  and large enough so that inequality~(\ref{eqn:tildec}) below holds.  Set
  $q = (\eta^2 / h^2\tilde{c}^3)p$. Certainly $q\le 1/\tilde{c}$, and
  moreover $q \ge N^{-1/m(H)}$ holds if $c$ is large enough because $M \ge
  cn^{2-1/m(H)}$.  Hence we may apply Theorem~\ref{thm:ffree_cover_heavy}
  with~$H$, $\eps$, $q$ and~$\widetilde{G}$. Let~$\C$ be given by the
  theorem.

Consider the tuples $T=(T_1,\ldots,T_s)$ described in
Theorem~\ref{thm:ffree_cover_heavy}. For each such~$T$, let
$J(T)=T_1\cup \cdots \cup T_s$, and define the following probabilistic events:
\begin{align*}
 E_T &: \mbox{$J(T) \subset X \subset C(T)$ and $X$ is $(H,\eta,p)$-regular},\\
 F_T &: \mbox{$J(T) \subset X$}, \\
 G_T &: \mbox{$X\subset C(T)$ and $X$ is $(H,\eta,p)$-regular}.
\end{align*}
Theorem~\ref{thm:ffree_cover_heavy} states that if $i_H(X)< q^{e(H)}
N^h$ then there
exists~$T=(T_1,\ldots,T_s)$ with $J(T)\subset X \subset C(T)$; thus if in
addition $X$ is $(H,\eta,p)$-regular then~$E_T$ holds.
We may assume that $1/c\le\eta$ and also that $n^hp^{e(H)}/c< q^{e(H)}
N^h$. Therefore, by the union bound, 
to complete the proof it is enough to show that
$\sum_T \mathbb{P}(E_T) \le \alpha^M$.
Note that $E_T = F_T \cap G_T$ and so
$\mathbb{P}(E_T)=\mathbb{P}(F_T)\mathbb{P}(G_T|F_T)$.

In order to bound $\mathbb{P}(G_T|F_T)$, let $T$ be fixed, let $J=J(T)$ and
let $C=C(T)\in\C$.
Theorem~\ref{thm:ffree_cover_heavy} guarantees that~$C$ contains
at most $\eps N^{v(H)}$ canonical copies of~$H$. Thus by
Lemma~\ref{lem:weak_counting}, there exists $\{i,j\} \in E(H)$,
$A \subset V_i$, $B \subset V_j$, with $|A|, |B| = \gamma n$ and
$e(C[A,B]) \le f(\gamma) n^2$, where $\gamma\ge\eta$.
If $G_T$ holds, then $X$ is $(H,\eta,p)$-regular and so
\begin{equation}\label{eq:klr3_gt}
 |(A \times B)\cap (X-J)| \ge (1-\eta)p|A||B| - |J| \ge \gamma^2 M / 2
\end{equation}
(here we assumed, as we may, that $\eta\le 1/4$, and noted that $|J| \le
\tilde{c} q N^2 =\eta^2 pn^2/\tilde{c}^2\le \gamma^2 M/4$).  However
$|(A\times B) \cap C| \le f(\gamma) n^2$ and if $G_T$ holds then $X\subset
C$, so the probability of~(\ref{eq:klr3_gt}) is small. Specifically, in
generating the random graph~$(X-J)\cap (V_i \times V_j)$ when conditioned
on $J\subset X$, we are selecting a set of $M-|J\cap (V_i \times V_j)|\le
M$ edges uniformly from at least $n^2-|J\cap (V_i \times V_j)| \ge n^2/2$
possible edges, and for~(\ref{eq:klr3_gt}) to hold, we must select at least
$\gamma^2 M/2$ edges from a set of at most $f(\gamma) n^2$ possibilities.
This probability is at most
\[
 \mathbb{P}(G_T|F_T)
   \le {M\choose \gamma^2 M/2} \left(\frac{f(\gamma)
       n^2}{n^2/2}\right)^{\gamma^2 M/2} 
   \le 2^M (2f(\gamma))^{\gamma^2 M/2} 
   \le (\alpha/2)^M\,,
\]
by the definition of $f$.

Thus $\sum_T\mathbb{P}(E_T)\le\sum_T\mathbb{P}(F_T)\mathbb{P}(G_T|F_T)
\le (\alpha/2)^M\sum_T\mathbb{P}(F_T)$, and to finish the proof it is
enough to show that $\sum_T\mathbb{P}(F_T)\le 2^M$. Now
$$
\sum_T\mathbb{P}(F_T)=\sum_T\mathbb{P}(J(T)\subset X)
=\sum_T\mathbb{E}\mathbf{1}_{J(T)\subset X}
=\,\mathbb{E}\,|\{T\,:\,J(T)\subset X\}|\,.
$$
Since $T=(T_1,\ldots,T_s)$ where $s\le \tilde{c}$, Lemma~\ref{lem:entropy}
tells us that $|\{T\,:\,J(T)\subset X\}| \le\exp\{\tilde{c}\theta
  |X|(1+\log(1/\theta))\}$ where $\theta |X|$ is the average size of
the~$T_i$. Now $|X|=Me(H) = pn^2e(H)$ and $\theta|X|\le \tilde{c}
qN^2$. Thus
\begin{align}
 \tilde{c} \theta |X| (1 + \log (1/\theta))
   &\le \tilde{c} (\tilde{c}qN^2) \Big(1+\log
   \frac{pn^2e(H)}{\tilde{c}qN^2}\Big) \nonumber \\ 
   &= M \big(\eta^2/\tilde{c}\big) \big(1+\log ( e(H) \tilde{c}^2 /
   \eta^2)\big)  \nonumber \\ 
   &< M \log 2\label{eqn:tildec}
 \end{align}
as required.
\end{proof}

\section{Optimality}\label{sec:optimality}

We finish with the proof of Theorem~\ref{thm:optimality}
from~\S\ref{subsec:optimality}. The ideas behind the proof, which is a kind
of converse to the proof of Theorem~\ref{thm:ffree_sparse}, have already
been sketched out but here are the details.

\begin{proof}[Proof of Theorem~\ref{thm:optimality}.]
  We may assume that $c<1/4$. We may assume that $tn$ is large, since we can
  choose $\gamma$ so that $\gamma tn<1/2$ for small values of $tn$, in
  which case the theorem is immediate.
  We prove~(i) in a way that can readily be adapted
  for~(ii). 
  Put $\epsilon=\min\{1,(rc/9)^{1/(r-1)}\}$. 
  Select a subset $X\subset [n]$ by
  choosing vertices independently with probability~$p$, where
  $p=\epsilon t = \epsilon d^{-1/(r-1)}$.
  As mentioned in~\S\ref{subsec:optimality}, $X$ is likely to be close to
  independent. To be precise, by standard
  estimates for the binomial distribution (such as~\cite[Corollary
  2.3]{JLR}) we have $\mathbb{P}(|X|\le (1-c/3)pn) \le 2 e^{-c^2pn/40} < 1/3$
  since $pn=\epsilon tn$ is large. 
  The expected value of $e(G[X])$ is $p^rnd/r$ so
  $\mathbb{P}(e(G[X])>3p^rnd/r)\le 1/3$. Hence, by removing a vertex from each
  edge of $G[X]$, we see that, with probability at least $1/3$, $X$
  contains an independent subset $I$ with $|I|\ge (1-c/3)pn - 3p^rnd/r \ge
  (1-2c/3)pn$, where the last inequality holds because $\epsilon^{r-1}\le
  rc/9$. In summary
\begin{equation}\label{eqn:Cint}
  \mathbb{P}(\mbox{there exists independent $I\subset X$ with }|I|\ge
  (1-2c/3)pn) \,\ge\,{1\over3}\,.
\end{equation}
There must be some $C\in\C$ with $I\subset C$, so $\mathbb{P}(\mbox{there
  exists $C\in\C$ with }|X\cap C|\ge (1-2c/3)pn)\ge1/3$.  
This can happen only if $|\C|$ is large, since for an individual container
$C\in\C$ the event $|X\cap C|\ge (1-2c/3)pn$ is unlikely, because $|C|\le
(1-c)n$. Indeed, choosing
 $C'$ containing $C$ with $|C'|=(1-c)n$, and again using standard
estimates (e.g.~\cite[Corollary 2.3]{JLR}), we have
$\mathbb{P}(|X\cap C|\ge
(1-2c/3)pn)\le \mathbb{P}(|X\cap C'|\ge (1-2c/3)pn)\le 2
e^{-c^2pn/40}$. Therefore $|\C|2e^{-c^2pn/40}\ge 1/3$ or $|\C|\ge
(1/6)e^{c^2pn/40}$, which proves~(i).

We can sharpen this calculation if $\C$ is internally generated and
$\C=\{C(T):T\in\mathcal{T}\}$. If $I\subset X$ is an independent set then
there is some $T\in\mathcal{T}$ with $T\subset I\subset C(T)$. So
inequality~(\ref{eqn:Cint}) implies
\begin{equation}\label{eqn:TCint}
\mathbb{P}\left( |C(T) \cap X|\ge (1-2c/3)pn\mbox{ for some }
  T\in\mathcal{T}, T\subset X\ \right)
   \,\ge\,{1\over3}\,.
\end{equation}
We next show that small sets $T$ cannot make much contribution
to~(\ref{eqn:TCint}).
Let $\mathcal{T}^*=\{T\in\mathcal{T}: |T|\le \gamma tn\}$. Apply
Lemma~\ref{lem:sparse} with $L=[n]$, $s=1$, $t$ in the lemma equal to
$\gamma tn$ here, $D(T)=C(T)$ and $d=(1-c)n$. Let $\nu=c/3$, so $(1+\nu)pd
< (1-2c/3)pn$. Choose $\gamma$ so that $\phi\gamma\le \epsilon$. If we take
$\mathcal{I}$ in the lemma to be those independent sets for which there
exists some $T\in\mathcal{T}^*$ with $T\subset I\subset C(T)$, then the
conditions of the lemma are satisfied, and so
$$
\mathbb{P}\left( |C(T) \cap X|\ge (1-2c/3)pn\mbox{ for some }
  T\in\mathcal{T^*}, T\subset X\ \right) \le \exp\{ -\nu^2 pd/32 \} \le
{1\over6}
$$
because $tn$ is large. 

It follows now from inequality~(\ref{eqn:TCint}) that
$$
\mathbb{P}\left( |C(T) \cap X|\ge (1-2c/3)pn\mbox{ for some }
  T\in\mathcal{T}\setminus\mathcal{T}^*, T\subset X\ \right)
   \,\ge\,{1\over6}\,.
$$
In particular $\mathbb{P}(T\subset X\mbox{ for some }
T\in\mathcal{T}\setminus\mathcal{T}^*)\ge1/6$. But $\mathbb{P}(T\subset
X)=p^{|T|}$ and so
$$
\mathbb{P}(T\subset X\mbox{ for some }T\in\mathcal{T}\setminus\mathcal{T}^*)
\,\le\,\sum_{T\notin\mathcal{T}^*}p^{|T|}\le
|\mathcal{T}\setminus\mathcal{T}^*|\,p^{\gamma tn}
$$
by definition of $\mathcal{T}^*$. Hence
$|\mathcal{T}\setminus\mathcal{T}^*|p^{\gamma tn}\ge 1/6$.
Therefore $\mathcal{T}\setminus\mathcal{T}^*\ne\emptyset$, which proves
the first part of~(ii), and moreover
$|\mathcal{T}|\ge (1/6)(1/p)^{\gamma tn}\ge e^{\gamma' nt\log (1/t)}$ for
some constant $\gamma'$, implying the second part of~(ii).

Finally, suppose that $G$ is vertex and edge transitive and that
$\delta(G,\tau)\ge1$. Recalling Definition~\ref{defn:delta}, the vertex
transitivity of $G$ means that there are numbers $D_j$, $2\le j\le r$, such
that $d^{(j)}(v)=D_j$ for all $v\in[n]$. Let $F_j=\{\sigma\subset[n]:
|\sigma|=j,\, d(\sigma)=D_j\}$ and let $G_j$ be the $j$-graph with edge set
$F_j$. By the edge transitivity of $G$, every edge of $G$ includes a member of
$F_j$. In particular, every subset $I\subset [n]$ that is independent in
$G_j$ is independent in $G$, and so $\C$ is a collection of containers for
$G_j$ too.

Let $d_j$ be the average degree of~$G_j$. Then
$D_jnd_j/j=D_je(G_j)=D_j|F_j|=\sum_{\sigma\in F_j}d(\sigma)\le{r\choose j}
e(G) = {r\choose j}nd/r$. Thus $d_j\le {r-1\choose j-1}(d/D_j)$.  Recalling
again Definition~\ref{defn:delta}, we have $\delta_j
\tau^{j-1}nd=nD_j$. Now $\delta(G,\tau)\ge1$, so for some~$j$ we have
$2^{{r\choose2}-1-{j-1\choose2}}\delta_j>1/r$. This certainly implies
$r2^{{r\choose2}}\delta_j>1$, and so
$r2^{{r\choose2}}D_j>\tau^{j-1}d$. Hence, for this value of~$j$, we have
$d_j\le {r-1\choose j-1}(d/D_j)\le{r-1\choose
  j-1}r2^{{r\choose2}}\tau^{1-j}$. In particular $d_j^{-1/(j-1)}\ge
\gamma'\tau$ for some positive number $\gamma'$ depending on~$r$.

We noted previously that $\C$ is a collection of containers for
$G_j$. Hence, for every~$j$, properties~(i) and~(ii) apply with
$t=d_j^{-1/(j-1)}$. But there is some $j$ with $d_j^{-1/(j-1)}\ge
\gamma'\tau$, and so (i) and~(ii) apply with $t=\tau$, once the value of
$\gamma$ has been suitably adjusted.
\end{proof}

\section{Postscript}\label{sec:postscript}

We are extremely grateful to the referees of this paper, who expended a
great deal of care and thought on their work, and made many valuable
suggestions. In particular, their conscientious reading showed up a subtle
error in the original version, relating to the online property. The error
arose due to the use of the condition $d_s(\sigma)\ge 2^s\tau
d_{s+1}(\sigma)$ to define entry of non-singleton sets into $\Gamma_s$. The
problem with this definition is that it is a condition relative to what
happens in
$P_{s+1}$, unlike the absolute condition $d_s(v)\ge \tau^{r-s}d(v)$ used for
the entry of vertices. This relativity breaks the online property, which
is why in the present paper the online property is claimed only for
simple graphs (for which non-singleton sets in $\Gamma_s$ are irrelevant).

As a consequence of this, we were prompted to revisit our earlier ideas for
entry conditions. It turns out to be possible to specify an absolute
condition for entry of $\sigma$ into $\Gamma_s$, which nevertheless implies
the inequality $d_s(\sigma)\ge 2^s\tau d_{s+1}(\sigma)$. This gives
rise to a slightly different algorithm, but one which still yields all the
theorems of the present paper.

However the modified algorithm has many advantages. It needs only one pass
through the vertex set, constructing the hypergraphs $P_s$ simultaneously,
rather than the $r-1$ consecutive passes of the present method. The
operation of the algorithm is thus more transparent. Moreover only one set
$T$ is produced in prune mode, rather than the tuple $(T_{r-1},\ldots,T_0)$
described here. The use of an absolute entry condition makes much clearer
how the co-degree function $\delta(G,\tau)$ arises, and how
slightly different functions could be used at the expense of somewhat
larger containers.  Finally, the single pass approach yields the online
property immediately for all hypergraphs, not just simple ones; in
particular Theorem~\ref{thm:uniform} holds for all $r$-graphs, giving more
general colouring results.  We hope to describe the modified algorithm
elsewhere~\cite{STa}.

\end{document}